\documentclass[a4paper,10pt]{amsart}

\usepackage[utf8]{inputenc} 
\usepackage{amsmath}
\usepackage{amsbsy}
\usepackage{amssymb}
\usepackage{amscd,amsthm}
\usepackage{verbatim}
\usepackage[english]{babel}
\usepackage{dsfont}
\usepackage{anysize}
\usepackage{hyperref}

\newcommand{\N}{\mathds{N}}

\newcommand{\p}{\phantom}
\newcommand{\q}{\quad}

\newtheorem{thm}{Theorem}[section]
\newtheorem{lem}[thm]{Lemma}
\newtheorem{kor}[thm]{Corollary}

\newtheorem{prop}[thm]{Proposition}

\theoremstyle{definition}

\theoremstyle{remark}
\newtheorem*{rema}{Remark}

\title{New estimates for some functions defined over primes}
\author{Christian Axler}
\address{Institute of Mathematics\\ Heinrich-Heine University Duesseldorf\\
40225 Duesseldorf, Germany}
\email{christian.axler@hhu.de}
\subjclass[2010]{Primary 11N05; Secondary 11A41}
\keywords{Chebyshev's $\vartheta$-function, prime counting function, primes in short intervals}
\date{\today}

\begin{document}

\begin{abstract}
In this paper we first establish new explicit estimates for Chebyshev's $\vartheta$-function. Applying these new estimates, we derive new upper and lower 
bounds for some functions defined over the prime numbers, for instance the prime counting function $\pi(x)$, which improve the currently best ones. Furthermore, 
we use the obtained estimates for the prime counting function to give two new results concerning the existence of prime numbers in short intervals.
\end{abstract}

\maketitle

\section{Introduction}

Let $\pi(x)$ denotes the number of primes not exceeding $x$. Since there are infinitely many primes, we have $\pi(x) \to \infty$ for $x \to \infty$. In 1793, 
Gauß \cite{gauss} stated a conjecture concerning the asymptotic behaviour for the prime counting function $\pi(x)$, namely
\begin{equation}
\pi(x) \sim \text{li}(x) \q\q (x \to \infty), \tag{1.1} \label{1.1}
\end{equation}
where the \emph{logarithmic integral} $\text{li}(x)$ defined for every real $x \geq 0$ as
\begin{equation}
\text{li}(x) = \int_0^x \frac{dt}{\log t} = \lim_{\varepsilon \to 0} \left \{ \int_{0}^{1-\varepsilon}{\frac{dt}{\log t}} + 
\int_{1+\varepsilon}^{x}{\frac{dt}{\log t}} \right \}. \tag{1.2} \label{1.2}
\end{equation}
The asymptotic formula \eqref{1.1} was proved by Hadamard \cite{hadamard1896} and, independently, by de la Vall\'{e}e-Poussin \cite{vallee1896} in 1896, and is
known as the \textit{Prime Number Theorem}. In his later paper \cite{vallee1899}, where he proved the existence of a zero-free region for the Riemann 
zeta-function $\zeta(s)$ to the left of the line $\text{Re}(s) = 1$, de la Vall\'{e}e-Poussin also estimated the error term in the Prime Number Theorem by 
showing
\begin{equation}
\pi(x) = \text{li}(x) + O(x \exp(-c_0\sqrt{\log x})), \tag{1.3} \label{1.3}
\end{equation} 
where $c_0$ is a positive absolute constant. The work of  Korobov \cite{korobov1958} and Vinogradov \cite{vinogradov1958} implies the currently best error 
term, namely that there is a positive absolute constant $c_1$ so that
\begin{equation}
\pi(x) = \text{li}(x) + O \left( x \exp \left( - c_1 (\log x)^{3/5} (\log \log x)^{-1/5} \right) \right). \tag{1.4} \label{1.4}
\end{equation}
The computation of the prime counting function $\pi(x)$ for large values of $x$ is a difficult problem (the latest record was $\pi(10^{25}) =  
176\,846\,309\,399\,143\,769\,411\,680$ and is due to Büthe, Franke and Kleinjung \cite{buethepre2}). Since the asymptotic formula \eqref{1.4} is not very 
meaningful with regard to the computation of $\pi(x)$ for some fixed $x$, we are interested to find new explicit estimates for the prime counting function. In 
order to do this, we first need to establish the following result on Chebyshev's $\vartheta$-function
\begin{displaymath}
\vartheta(x) = \sum_{p \leq x} \log p,
\end{displaymath}
which improves several known estimates for this function.

\begin{thm} [See Theorem \ref{thm204}] \label{thm101}
For every $x \geq 19\,035\,709\,163$, we have
\begin{displaymath}
\vartheta(x) > x - \frac{0.15x}{\log^3 x},
\end{displaymath}
and for every $x > 1$, we have
\begin{displaymath}
\vartheta(x) < x + \frac{0.15x}{\log^3 x}.
\end{displaymath}
\end{thm}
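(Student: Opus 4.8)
The plan is to derive both inequalities from a sufficiently sharp explicit estimate for $\psi(x) - x$, where $\psi(x) = \sum_{p^k \le x}\log p$, and then to transfer the result to $\vartheta$. The basic tool is the explicit formula
\[
\psi(x) = x - \sum_{\rho}\frac{x^\rho}{\rho} - \log(2\pi) - \tfrac12\log\!\left(1 - x^{-2}\right),
\]
the sum running over the nontrivial zeros $\rho = \be + i\gamma$ of $\zeta$. Working with a smoothed version of this identity so that the zero sum converges absolutely, everything comes down to bounding $\big|\sum_\rho x^\rho/\rho\big|$ by a little less than $0.15\,x/\log^3 x$, leaving a margin for the passage back to $\vartheta$.

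First I would split the zero sum at the height $T_0$ to which the Riemann Hypothesis has been verified numerically. All zeros with $|\gamma| \le T_0$ satisfy $\be = \tfrac12$, so they contribute $O(\sqrt{x}\,\log^2 T_0)$; for $|\gamma| > T_0$ I would insert an explicit zero-free region $\be \le 1 - 1/(R\log|\gamma|)$ and an explicit form of the zero-counting bound $N(T) \le \frac{T}{2\pi}\log\frac{T}{2\pi} - \frac{T}{2\pi} + C\log T$, and estimate the tail by partial summation. Because the zero-free region makes the tail decay faster than any fixed power of $1/\log x$, the resulting inequality $|\psi(x) - x| \le \eta\,x/\log^3 x$ will hold for all $x \ge X_1$ with $\eta$ as small as one likes; the genuine content is to keep the effective threshold $X_1$ small enough to be reached by computation.

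To descend to $\vartheta$ I would use $\psi(x) - \vartheta(x) = \sum_{k \ge 2}\vartheta(x^{1/k})$ together with an explicit bound of the shape $\psi(x) - \vartheta(x) < 1.43\sqrt{x}$. This correction is asymptotically negligible against $x/\log^3 x$, but near the threshold $19\,035\,709\,163$ it is already of the same order of magnitude as $0.15\,x/\log^3 x$, so it is exactly this term that governs where the lower bound can first hold. Combining the two estimates gives the claimed inequalities for all large $x$; the remaining bounded range I would settle by direct computation, using that $\vartheta(x) - x$ jumps only at primes, so each inequality reduces to finitely many checks at prime arguments. The upper bound survives for every $x > 1$ because $0.15\,x/\log^3 x \to +\infty$ as $x \to 1^+$, whereas the lower bound fails below $19\,035\,709\,163$ and this value should appear as the last prime at which the downward oscillation of $\vartheta(x) - x$ breaches the tolerance.

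The main difficulty is quantitative rather than conceptual: the two demands --- a constant pushed down to $0.15$ and an effective threshold $X_1$ small enough to connect with a feasible computation --- pull against each other, since any crude treatment of the tail over zeros inflates $X_1$ dramatically. Getting both right requires the best available verification height $T_0$, a sharp explicit zero-free constant $R$, and careful optimization of the smoothing parameter in the explicit formula, followed by a certified computation of $\vartheta$ over a long initial segment of the primes to pin down the exact threshold.
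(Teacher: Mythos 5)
Your route is genuinely different from the paper's. The paper never touches the explicit formula directly: for $x\ge e^{32}$ it quotes ready-made estimates of Dusart (the bound $|\vartheta(x)-x|\le \sqrt{8}/\sqrt{\pi\sqrt{R}}\,x(\log x)^{1/4}e^{-\sqrt{(\log x)/R}}$ and his tabulated $\varepsilon_i$-values), below $e^{32}$ it quotes Büthe's kernel-based results $\vartheta(t)\ge t-1.95\sqrt{t}$ (for $1\,423\le t\le 10^{19}$), $\vartheta(t)<t$ (for $t\le 10^{19}$), and a refined $\psi$-to-$\vartheta$ transfer giving $\vartheta(t)\ge t-1.81\sqrt{t}-0.8t^{1/4}-2\cdot 1.03883\,t^{1/3}$, and it closes the remaining window $19\,035\,709\,163\le x\le 29\,946\,085\,320$ by checking $\vartheta(p_n)>f(p_{n+1})$, $f(x)=x(1-0.15/\log^3x)$, prime by prime. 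So the paper is an assembly-plus-computation argument, while you propose to rebuild the analytic input from scratch.

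The gap in your plan is quantitative and sits exactly at the threshold. At $x_0=19\,035\,709\,163$ the budget is $0.15x_0/\log^3x_0\approx 2.15\cdot 10^5$, and your transfer term $1.43\sqrt{x_0}\approx 1.97\cdot 10^5$ already consumes almost all of it, leaving about $0.13\sqrt{x_0}$. Under trivial bounding $|x^\rho/\rho|\le\sqrt{x}/|\gamma|$, the single pair of zeros $\tfrac12\pm i\gamma_1$ with $\gamma_1\approx 14.13$ contributes $2\sqrt{x}/\gamma_1\approx 0.14\sqrt{x}$, which is already over that remainder; your full term $O(\sqrt{x}\log^2T_0)$, about $65\sqrt{x}\approx 9\cdot 10^6$ for any realistic verification height $T_0$ of a few times $10^{12}$, overshoots it by a factor of several hundred. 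No choice of smoothing parameter repairs a bound in which every zero is counted with absolute value, so your analytic region cannot begin anywhere near $2\cdot 10^{10}$. Optimizing the smoothing (effective truncation $1/\delta\asymp\log^3x$, so the logarithmic factor is $\log^2(1/\delta)$ rather than $\log^2T_0$), the crossover where $\sqrt{x}\,(\log^2(1/\delta)/(4\pi)+1.43)$ plus the smoothing cost fits under $0.15x/\log^3x$ is only around $x\approx 10^{13}$ to $10^{14}$. Your ``finitely many checks at prime arguments'' must then certify $\vartheta$ at every prime up to that height, on the order of $10^{12}$ primes with a rigorously controlled running sum of $\log p$ --- three to four orders of magnitude beyond the paper's check over $[1.9\cdot 10^{10},\,3\cdot 10^{10}]$ (about $5\cdot 10^8$ primes). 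This is precisely why the paper imports Büthe's results: his method converts RH verification up to height $T_0$ into $|\vartheta(x)-x|\le c\sqrt{x}$ with $c\approx 2$ and no logarithmic factor, valid up to $10^{19}$, which is what lets the analytic part reach down to $3.4\cdot 10^{10}$.

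A smaller technical point: the tail estimate as you state it (insert the zero-free region for $|\gamma|>T_0$ and sum by parts) diverges, since $x^{1-1/(R\log\gamma)}/\gamma$ is not summable over the zeros; the smoothing you mention is not a convenience but the only thing that makes this step meaningful, and its cost, roughly $x\delta$ with $\delta$ necessarily varying with $x$, must itself be charged against the $0.15x/\log^3x$ budget. Your outline names both issues but treats them as routine optimization; they are where essentially all of the work lies.
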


In 2000, Panaitopol \cite[p. 55]{pan3} gave another asymptotic formula for the prime counting function by showing that for each positive integer $m$, we have
\begin{displaymath}
\pi(x) = \frac{x}{ \log x - 1 - \frac{k_1}{\log x} - \frac{k_2}{\log^2x} - \ldots -  \frac{k_{m}}{\log^{m} x}} + O \left( \frac{x}{\log^{m+2}x}  \right),
\end{displaymath}
where the positive integers $k_1, \ldots, k_m$ are defined by the recurrence formula
\begin{displaymath}
k_m + 1!k_{m-1} + 2!k_{m-2} + \ldots + (m-1)!k_1 = m \cdot m!.
\end{displaymath}
In Section 3, we use the inequalities obtained in Theorem \ref{thm101} to find among others the following explicit estimates for the prime counting function,
which improve the current best estimates for $\pi(x)$.

\begin{thm} [See Theorem \ref{thm302}] \label{thm102}
For every $x \geq 49$, we have
\begin{displaymath}
\pi(x) < \frac{x}{\log x - 1 - \frac{1}{\log x} - \frac{3.15}{\log^2x} - \frac{12.85}{\log^3x} - \frac{71.3}{\log^4x} - \frac{463.2275}{\log^5x} -
\frac{4585}{\log^6x}}.
\end{displaymath}
\end{thm}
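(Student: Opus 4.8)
The plan is to transfer the upper estimate for Chebyshev's $\vartheta$-function from Theorem~\ref{thm101} to the prime counting function by partial summation; note that only the bound $\vartheta(t) < t + 0.15t/\log^3 t$, valid for every $t > 1$, is needed, so no large lower threshold intervenes and the admissible range can reach down to $x = 49$. Putting $a(n) = \log n$ for prime $n$ and $a(n) = 0$ otherwise, so that $\vartheta(t) = \sum_{n \le t} a(n)$ and $\pi(x) = \sum_{n \le x} a(n)/\log n$, Abel's summation formula gives
\begin{displaymath}
\pi(x) = \frac{\vartheta(x)}{\log x} + \int_2^x \frac{\vartheta(t)}{t \log^2 t}\,dt.
\end{displaymath}
First I would insert the upper bound for $\vartheta$ into the boundary term and the integrand, reducing the estimate to the elementary integrals $\int_2^x dt/\log^k t$ with $k = 2$ (from the main part) and $k = 5$ (from the $0.15$-correction).

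Next I would evaluate these integrals by repeated integration by parts based on $\frac{d}{dt}\bigl(t/\log^k t\bigr) = 1/\log^k t - k/\log^{k+1} t$. This unfolds $\int_2^x dt/\log^2 t$ into $\sum_{k=2}^{7}(k-1)!\,x/\log^k x$ together with the boundary constants at $t = 2$ (which appear with a negative sign and are therefore harmless for an upper bound) and a remainder of order $x/\log^8 x$. Collecting the contributions of $\vartheta(x)/\log x$ and of the two integrals yields an upper estimate of the form
\begin{displaymath}
\pi(x) < \frac{x}{\log x}\left(1 + \frac{c_1}{\log x} + \frac{c_2}{\log^2 x} + \cdots + \frac{c_7}{\log^7 x}\right),
\end{displaymath}
where the explicit numbers $c_j$ are close to the factorials $j!$, the discrepancies being produced by the $0.15$ error in the $\vartheta$-bound, the $t=2$ boundary constants, and the discarded remainder.

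The decisive step is the conversion of this additive estimate into the reciprocal form of the statement. Writing $u = 1/\log x$ and letting $D(x) = 1/u - 1 - u - 3.15u^2 - 12.85u^3 - 71.3u^4 - 463.2275u^5 - 4585u^6$ denote the denominator, the claim $\pi(x) < x/D(x)$ is equivalent, after multiplication by the positive quantity $u\,D(x)$, to the polynomial inequality
\begin{displaymath}
\bigl(1 + c_1 u + c_2 u^2 + \cdots + c_7 u^7\bigr)\bigl(1 - u - u^2 - 3.15u^3 - 12.85u^4 - 71.3u^5 - 463.2275u^6 - 4585u^7\bigr) \le 1.
\end{displaymath}
Matching the low-order coefficients of this product against $1$ would reproduce exactly the reciprocal of the logarithmic-integral series, that is Panaitopol's integers $k_1 = 1, k_2 = 3, k_3 = 13, \ldots$; the slightly perturbed values $3.15, 12.85, 71.3, 463.2275, 4585$ are then chosen so that the leftover higher-order terms stay nonpositive throughout the admissible range $0 < u \le 1/\log 49$.

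I expect the main obstacle to be precisely this calibration: one must keep rigorous control of every error term and boundary constant through the integration by parts and then tune the denominator coefficients so that the resulting polynomial inequality in $u$ holds not merely asymptotically but for all $x \ge 49$. Since the analytic bound is sharp only for large $x$, I would finish by checking that $D(x) > 0$ and that $x/D(x)$ is increasing on the relevant initial range, which reduces the inequality there to a finite verification at the primes, where $\pi$ jumps and the right-hand side is smallest.
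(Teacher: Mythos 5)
Your skeleton (the identity \eqref{2.1}, the one--sided bound $\vartheta(t)<t+0.15t/\log^3t$ from Theorem \ref{thm204}, unfolding the integral, then converting to the reciprocal form) is the right family of ideas, and your observation that only the \emph{upper} $\vartheta$-bound is needed for an upper bound on $\pi$ is correct. But there is a genuine gap, and it sits exactly where you locate the "main obstacle." The remainder produced by the integrations by parts, $5040\int_2^x dt/\log^8 t$ (plus $31.5\int_2^x dt/\log^8t$ coming from the $0.15$-correction), enters your upper bound with a \emph{positive} sign, so it cannot be discarded; it must be bounded above by an explicit multiple $\rho\,x/\log^8 x$. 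Now expand the right-hand side of the theorem: with $u=1/\log x$, one computes that $xu/D(u)$ has series coefficients $1,1,2,6.15,24.15,120.75,724.5$ at orders $u,\dots,u^7$ --- matching exactly what your unfolding produces --- and roughly $6191$ at order $u^8$. So your polynomial inequality requires $5071.5\,\rho\lesssim 6191$, i.e.\ $\rho\lesssim 1.22$; since any valid bound has $\rho\geq 1+8/\log x+72/\log^2x+\cdots$, this forces $\log x$ into the high thirties, i.e.\ $x$ of order $10^{15}$ or more. This is not a bookkeeping defect but a structural obstruction to a single calibration: if you enlarge the top coefficient of your additive bound so that it is valid for moderate $x$, it then exceeds the corresponding coefficient of $1/D(u)$, and the product inequality $P(u)D(u)\leq 1$ fails for \emph{small} $u$ (large $x$). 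No choice of truncation depth escapes this, because the theorem's coefficients are calibrated to be essentially sharp near $x=10^{15}$. Your claim that "no large lower threshold intervenes and the admissible range can reach down to $x=49$" is therefore false.

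The second, consequent gap is that you have no way to bridge $[49,\,10^{15}]$. Your proposal --- monotonicity of $x/D(x)$ plus "finite verification at the primes" --- would require checking on the order of $\pi(10^{15})\approx 3\cdot 10^{13}$ primes, which is not a realistic computation. The paper's proof is built precisely around avoiding this: for $x\geq x_1=10^{15}$ it never unfolds the integral at all, but compares the candidate bound $f$ with $J_{3,0.15,x_1}$ from \eqref{3.2}, anchored at the \emph{exact} values $\pi(10^{15})$ and $\vartheta(10^{15})$, so that everything reduces to $f(x_1)>J(x_1)$ and a polynomial sign condition on the derivative; then, for $1\,095\,698\leq x\leq 10^{15}$, it shows $f(x)>\text{li}(x)$ by an elementary derivative comparison and invokes Büthe's unconditional inequality $\pi(x)<\text{li}(x)$ for $2\leq x\leq 10^{19}$ (inequality \eqref{3.4}); only below $1\,095\,698$ (about $85\,000$ primes) does it resort to a prime-by-prime check. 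Some such device --- in practice, the comparison with $\text{li}$ together with Büthe's result --- is indispensable, and it is the key idea missing from your proposal.
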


\begin{thm} [See Theorem \ref{thm308}] \label{thm103}
For every $x \geq 19\,033\,744\,403$, we have
\begin{displaymath}
\pi(x) > \frac{x}{\log x - 1 - \frac{1}{\log x} - \frac{2.85}{\log^2x} - \frac{13.15}{\log^3x} - \frac{70.7}{\log^4x} - \frac{458.7275}{\log^5x} -
\frac{3428.7225}{\log^6x}}.
\end{displaymath}
\end{thm}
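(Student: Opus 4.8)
The plan is to convert the lower bound for Chebyshev's $\vartheta$-function supplied by Theorem~\ref{thm101} into the stated bound for $\pi(x)$ by partial summation. For every $x \ge 2$, Abel summation of $\pi(x) = \sum_{p\le x}(\log p)/\log p$ with the weight $1/\log t$ yields
\begin{equation*}
\pi(x) = \frac{\vartheta(x)}{\log x} + \int_2^x \frac{\vartheta(t)}{t\log^2 t}\,dt .
\end{equation*}
Writing $x_0 = 19\,035\,709\,163$ and splitting the integral at $x_0$, I would substitute the lower bound $\vartheta(t) > t - 0.15\,t/\log^3 t$ from Theorem~\ref{thm101} on $[x_0,x]$, retain $\int_2^{x_0}\vartheta(t)/(t\log^2 t)\,dt$ as an explicit positive constant $c_0$, and bound the boundary term below by $\vartheta(x)/\log x > x/\log x - 0.15\,x/\log^4 x$. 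This reduces the problem to the two elementary integrals $\int_{x_0}^x dt/\log^2 t$ and $\int_{x_0}^x dt/\log^5 t$.

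Next I would evaluate these integrals. Starting from $\frac{d}{dt}\bigl(t/\log^n t\bigr) = 1/\log^n t - n/\log^{n+1} t$, one gets $\int dt/\log^n t = t/\log^n t + n\int dt/\log^{n+1} t$, and iterating this from $n=2$ unfolds the factorial expansion
\begin{equation*}
\int_{x_0}^x \frac{dt}{\log^2 t} = \frac{x}{\log^2 x} + \frac{2x}{\log^3 x} + \frac{6x}{\log^4 x} + \frac{24x}{\log^5 x} + \frac{120x}{\log^6 x} + 720\int_{x_0}^x \frac{dt}{\log^7 t} - c_1 ,
\end{equation*}
with $c_1$ collecting the $x_0$-boundary contributions, and the error integral $\int_{x_0}^x dt/\log^5 t$ is expanded the same way to the needed order. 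Collecting everything, these pieces assemble into a lower bound of the form $\pi(x) > \tfrac{x}{\log x}\,S(u) + (\text{positive tail}) + \text{const}$, where $u = 1/\log x$ and
\begin{equation*}
S(u) = 1 + u + 2u^2 + (6-0.15)u^3 + (24-0.15)u^4 + (120-0.75)u^5 + \cdots ,
\end{equation*}
the coefficients being the Panaitopol factorials $m!$ perturbed downward at orders $\ge 3$ by the $0.15$-error from the $\vartheta$-estimate.

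The final step is the comparison with the target $f(x) = x/F(x)$, where $F(x) = \log x - 1 - 1/\log x - 2.85/\log^2 x - \cdots - 3428.7225/\log^6 x$. Setting $F(x) = (\log x)\,G(u)$ with the degree-$7$ polynomial $G(u) = 1 - u - u^2 - 2.85u^3 - 13.15u^4 - 70.7u^5 - 458.7275u^6 - 3428.7225u^7$, the desired inequality $\pi(x) > f(x)$ is equivalent, on $0 < u \le 1/\log x_0$ where $G(u)>0$, to the polynomial statement $S(u)\,G(u) \ge 1$. The non-integer coefficients $2.85, 13.15, \ldots$ are precisely those for which the low-order terms of $S(u)G(u)-1$ cancel against the error series (one checks that the contributions through order $u^5$ annihilate), so that the remaining positive margin is furnished by the retained tail $720\int_{x_0}^x dt/\log^7 t$ and the slack $3447 - 3428.7225$ in the top coefficient; this is a finite, explicitly checkable polynomial positivity on $(0, 1/\log x_0]$.

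The main obstacle I expect is twofold. First is rigorous bookkeeping: the expansions must be carried to order $\log^{-7} x$ with the remainder integral $\int dt/\log^7 t$ and all accumulated constants bounded \emph{effectively}, not merely asymptotically, so that the reduction to a genuine polynomial inequality is valid for every $x \ge x_0$ rather than only for large $x$. Second, the stated threshold $19\,033\,744\,403$ lies slightly below $x_0 = 19\,035\,709\,163$, so the analytic argument alone covers only $x \ge x_0$; the short remaining interval must be closed by a direct verification, checking that $\pi(x) > f(x)$ holds there, which—since $\pi(x)-f(x)$ is smallest just below each prime—amounts to confirming $\pi(p)-1 \ge f(p)$ at the finitely many primes $p$ in that range.
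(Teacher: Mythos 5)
Your skeleton matches the paper's: both start from identity \eqref{2.1}, insert the bound $\vartheta(t) > t - 0.15t/\log^3 t$ of Theorem \ref{thm204} (i.e.\ the lower bound $\pi(x) \geq J_{3,-0.15,\,\cdot}(x)$ of \eqref{3.2}), and both close the short range $[19\,033\,744\,403,\ 19\,035\,709\,163]$ by the same prime-by-prime computer verification against the increasing function $f$. The difference is the middle step, and there your proposal has a genuine gap. You claim the theorem is equivalent, for $u = 1/\log x \in (0, 1/\log x_0]$, to the polynomial inequality $S(u)G(u) \geq 1$. That inequality is \emph{false} --- in fact for every $u > 0$. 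The coefficients of $S(u)G(u) - 1$ do vanish through $u^5$, as you say, but the $u^6$ coefficient equals $-458.7275 - 70.7 - 2(13.15) - 5.85(2.85) - 23.85 - 119.25 = -715.5$, the $u^7$ coefficient equals $-4293$, and every remaining coefficient up to degree $12$ is also negative (each is a sum of terms $S_iG_j$ with $j \geq 1$, and all such products are negative). Hence $S(u)G(u) < 1$ for all $u>0$, and the reduction you propose to check collapses.

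The deficit has to be covered by the discarded remainder $(720-4.5)\int_{x_0}^x dt/\log^7 t$ and the boundary constant at $x_0$, and this is precisely where the real work lies, not in ``bookkeeping'': the tail expands as $715.5\,x/\log^7 x + 5008.5\,x/\log^8 x + 40068\,x/\log^9 x + \cdots$, while the deficit $\frac{x}{\log x}\bigl(1 - S(u)G(u)\bigr)/G(u)$ expands as $715.5\,x/\log^7 x + 5008.5\,x/\log^8 x + 11137.2625\,x/\log^9 x + \cdots$; the first two orders cancel \emph{identically}, and a positive margin appears only at order $x/\log^9 x$ (coefficient $40068 - 11137.2625 = 28\,930.7375$, which is exactly the leading coefficient of the polynomial $h$ in the paper's proof of Theorem \ref{thm308}). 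Because the margin sits two orders below the truncation, no finite expansion with crude bounds turns the problem into a polynomial positivity valid for every $x \geq x_0$: you must compare an integral against a rational function on an unbounded range. The device that works (and is what the paper does) is monotonicity: set $g(x) = J_{3,-0.15,x_1}(x) - f(x)$, check $g(x_1) > 0$ numerically --- this needs actual values, the paper uses $\pi(x_1) = 234\,954\,223$ and Dusart's bound $\vartheta(x_1) \leq 4\,999\,906\,576$ at $x_1 = 5 \cdot 10^9$, and likewise your constant $C$ has no a priori sign without such input --- and then show $g'(x) > 0$ for $x \geq x_1$; differentiation eliminates the integral, and it is \emph{that} step which legitimately produces a finite polynomial inequality in $\log x$. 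As written, your proposal is missing the argument that replaces the false equivalence, so it does not yet constitute a proof.
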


In Section 4, we apply these new estimates for the prime counting function to derive two new result concerning the existence of prime numbers in short 
intervals. The origin of this problem is Bertrand's postulate, which states that for each positive integer $n$ there is a prime number $p$ with $n < p \leq 
2n$. We give the following both refinements.

\begin{thm} [See Theorem \ref{thm401}] \label{thm104}
For every $x \geq 6\,034\,256$ there is a prime number $p$, such that
\begin{displaymath}
x < p \leq x \left( 1 + \frac{0.087}{\log^3x} \right),
\end{displaymath}
and for every $x > 1$ there is a prime number $p$, such that
\begin{displaymath}
x < p \leq x \left( 1 + \frac{198.2}{\log^4x} \right).
\end{displaymath}
\end{thm}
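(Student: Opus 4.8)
The plan is to translate each claim into a statement about the prime counting function: for an interval $(x,y]$ with $y=x(1+c/\log^k x)$, a prime lies in $(x,y]$ if and only if $\pi(y)-\pi(x)>0$, since $\pi(y)-\pi(x)$ is exactly the number of primes $p$ with $x<p\le y$. So I would fix such an interval and try to prove $\pi(y)>\pi(x)$ by bounding $\pi(y)$ from below with the estimate of Theorem \ref{thm103} and $\pi(x)$ from above with the estimate of Theorem \ref{thm102}; it then suffices to show that the lower bound for $\pi(y)$ already exceeds the upper bound for $\pi(x)$.

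Writing $L=\log x$ and $M=\log y=L+\log(1+c/L^{k})$, I would substitute the two rational bounds, clear the (positive) denominators, and reduce $\pi(y)>\pi(x)$ to a single inequality between two explicit expressions that are polynomial in $1/L$ and $1/M$. Expanding $M$ around $L$ and collecting everything as a power series in $1/L$, one checks that the dominant term is positive. This is where the two cases diverge: for the wide interval of the second part the leading gain of order $c/\log^4 x$ comfortably dominates all the error terms arising from the gap between the upper and lower estimates, whereas for the first part the constant $0.087$ is small and must be played off directly against those same error terms, so every lower-order contribution has to be tracked. I expect this calibration — establishing the reduced inequality for all $x$ beyond a threshold $x_0$ with a constant as tight as the estimates allow — to be the main obstacle, and to require the full strength of Theorems \ref{thm102} and \ref{thm103} (equivalently, the sharp $\vartheta$-bounds of Theorem \ref{thm101}) rather than any crude form.

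Once the analytic inequality holds for $x\ge x_0$, there remains the range below $x_0$, since the lower estimate for $\pi$ is available only from $x\ge 19\,033\,744\,403$. For $6\,034\,256\le x<x_0$ in the first part I would argue directly: for $x\in[p_n,p_{n+1})$ the least prime exceeding $x$ is $p_{n+1}$, and because the right endpoint $x(1+0.087/\log^3 x)$ is increasing in $x$ the worst case is $x=p_n$, so the claim over the whole range is equivalent to the finitely many gap inequalities $p_{n+1}-p_n\le 0.087\,p_n/\log^3 p_n$. These I would verify from the known tables of maximal prime gaps, testing only the record gaps. The second part must additionally cover all $1<x<x_0$; here the interval is so wide that for the smallest $x$ the conclusion follows from Bertrand's postulate and its explicit refinements, while the intermediate range is again dispatched by the maximal-gap tables.

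Finally I would confirm the bookkeeping: the threshold $x_0$ at which the analytic argument takes over must lie inside the range covered by the available prime-gap data, so that no $x$ is left between the two regimes. Verifying this overlap, together with securing the delicate constant $0.087$ in the first part against the error terms, is the crux of the whole argument; the second part, having a much wider interval, should follow more robustly by the same scheme.
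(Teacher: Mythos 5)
Your reduction to $\pi(y)>\pi(x)$ via the sandwich ``lower bound for $\pi(y)$ from Theorem \ref{thm103} against upper bound for $\pi(x)$ from Theorem \ref{thm102}'' cannot work, and the failure is already in the leading term, not in the lower-order bookkeeping you identify as the crux. Write $U(x)=x/D_U(\log x)$ and $L(x)=x/D_L(\log x)$ for those two bounds. Their denominators differ by
\begin{displaymath}
D_L(\log x)-D_U(\log x)=\frac{0.3}{\log^2x}-\frac{0.3}{\log^3x}+\frac{0.6}{\log^4x}+\frac{4.5}{\log^5x}+\frac{1156.2775}{\log^6x},
\end{displaymath}
so the gap between the two estimates at the \emph{same} point is $U(x)-L(x)\sim 0.3\,x/\log^4x$. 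With $y=x\left(1+0.087/\log^3x\right)$ the most your method can gain is $L(y)-L(x)\approx(y-x)/\log x\sim 0.087\,x/\log^4x$. Since $0.087<0.3$, the required inequality $L(y)>U(x)$ is false for all large $x$: after clearing denominators the difference behaves like $(0.087-0.3)\,x/\log^2x<0$. The same obstruction kills the second part at large $x$: there the gain is $\sim 198.2\,x/\log^5x$, which is dominated by the loss $0.3\,x/\log^4x$ as soon as $\log x>198.2/0.3\approx 661$, a region no gap table or finite computation can reach. Moreover the constant $0.3$ is not an artifact of Theorems \ref{thm102} and \ref{thm103}: it equals $2\eta_3$ with $\eta_3=0.15$ from Theorem \ref{thm101}, i.e.\ it is exactly the unavoidable two-sided loss when both endpoints are estimated by bounds of that strength. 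So your premise that the theorem needs only ``the full strength of Theorems \ref{thm102} and \ref{thm103} (equivalently, the sharp $\vartheta$-bounds of Theorem \ref{thm101})'' is wrong --- those bounds are strictly too weak for both halves of the statement.

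The paper's proof therefore does something you did not anticipate: it proves \emph{new, sharper} $\vartheta$-estimates on the range where the analytic argument must operate, namely $|\vartheta(t)-t|<0.043\,t/\log^3t$ for $t\ge e^{40}$ and $|\vartheta(t)-t|<99.07\,t/\log^4t$ for $t\ge e^{25}$, and then bounds $\vartheta(x+xf(x))-\vartheta(x)$ directly (a positive difference of $\vartheta$ forces a prime in the interval, since $\vartheta$ increases only at primes). The constants in the theorem are calibrated against these: $0.087>2\cdot 0.043$ and $198.2>2\cdot 99.07$, which is precisely where the numbers come from. The intermediate range $468\,991\,632\le x\le e^{40}$ in the first part is bridged by Dusart's result \eqref{4.2}, and the remaining finite ranges by computations of the kind you propose (your maximal-gap-table idea is fine for that finite part). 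To salvage your plan you would have to replace Theorems \ref{thm102} and \ref{thm103} by $\pi$-bounds derived from $\vartheta$-estimates at least as sharp as $0.043$ and $99.07$, at which point you are reproducing the paper's argument with an unnecessary detour through $\pi$.
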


In Section 5 and Section 6, we use Theorem \ref{thm101} to derived some upper and lower bounds for the prime functions
\begin{displaymath}
\sum_{p \leq x} \frac{1}{p}, \q \sum_{p \leq x} \frac{\log p}{p}, \q \prod_{p \leq x} \left( 1 - \frac{1}{p} \right),
\end{displaymath}
which improves Dusart's \cite{dusart2017} estimates for these functions.

\section{New estimates for Chebyshev's $\vartheta$-function}

The prime counting function $\pi(x)$ and Chebyshev's $\vartheta$-function are connected by the identities
\begin{equation}
\pi(x) = \frac{\vartheta(x)}{\log x} + \int_{2}^{x}{\frac{\vartheta(t)}{t \log^{2} t}\ dt} \tag{2.1} \label{2.1}
\end{equation}
and
\begin{equation}
\vartheta(x) = \pi(x) \log x - \int_{2}^{x}{\frac{\pi(t)}{t}\ dt}, \tag{2.2} \label{2.2}
\end{equation}
which hold for every $x \geq 2$ (see, for instance, Apostol \cite[Theorem 4.3]{ap}). In order to find new estimates for the prime counting function, we first
derive some new upper and lower bounds for Chebyshev's $\vartheta$-function and then use \eqref{2.1}. Using \eqref{2.2}, it is easy to see that the Prime 
Number Theorem \eqref{1.1} is equivalent to
\begin{equation}
\vartheta(x) \sim x \q\q (x \to \infty). \tag{2.3} \label{2.3}
\end{equation}
By proving the existence of a zero-free region for the Riemann zeta-function, de la Vall\'{e}e-Poussin \cite{vallee1899} was able to bound the error term in 
\eqref{2.3} by
\begin{equation}
\vartheta(x) = x + O(x \exp(-c_2\sqrt{\log x})), \tag{2.4} \label{2.4}
\end{equation} 
where $c_2$ is a positive absolute constant. 
The asymptotic formula \eqref{2.4} implies that for every positive integer $k$ there is a positive real number $\eta_k$ and a real $x_0(k) \geq 2$ so that
\begin{equation}
|\vartheta(x) - x| < \frac{\eta_kx}{\log^k x} \tag{2.5} \label{2.5}
\end{equation}
for every $x \geq x_0(k)$.
The work of Korobov \cite{korobov1958} and Vinogradov \cite{vinogradov1958} imply the best known error term in \eqref{2.3} namely
\begin{displaymath}
\vartheta(x) = x + O \left( x \exp \left( - c_3 \log^{3/5}x (\log \log x)^{-1/5} \right) \right),
\end{displaymath}
where $c_3$ is an absolute positive constant. Under the assumption that the Riemann hypothesis is true, von Koch \cite{koch1901} deduced the improved 
asymptotic formula
\begin{displaymath}
\vartheta(x) = x + O(\sqrt{x} \log^2 x).
\end{displaymath}
A precise version of this was given by Schoenfeld \cite[Theorem 10]{schoenfeld1976}. He found under the assumption that the Riemann hypothesis is true that
\begin{equation}
|\vartheta(x) - x| < \frac{1}{8\pi} \, \sqrt{x} \log^2 x \tag{2.6} \label{2.6}
\end{equation}
for every $x \geq 599$. In 2014, Büthe \cite[p. 2495]{buethe2016} found that the inequality \eqref{2.6} holds unconditionally for every $x$ such that $599 \leq
x \leq 1.4 \cdot 10^{25}$ by using the following lemma.

\begin{lem}[Büthe, \cite{buethe2016}] \label{lem201}
Let $T > 0$ be such that the Riemann hypthesis holds for every $0 < \emph{Im}(\rho) \leq T$. Then, under the condition $4.92 \sqrt{x/\log x} \leq T$, the
following estimates hold:
\begin{enumerate}
\item[(a)] $\displaystyle |\vartheta(x) - x| < \frac{1}{8\pi} \, \sqrt{x} \log^2 x$ for every $x \geq 599$,
\item[(b)] $\displaystyle |\pi(x) - \emph{li}(x)| < \frac{1}{8\pi} \, \sqrt{x} \log x$ for every $x \geq 2\,657$.
\end{enumerate}
\end{lem}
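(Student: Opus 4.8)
The plan is to reduce both statements to the analytic estimate for Chebyshev's $\psi$-function $\psi(x) = \sum_{p^k \leq x} \log p$ and then to exploit the Riemann--von Mangoldt explicit formula
\[
\psi(x) = x - \sum_{\rho} \frac{x^\rho}{\rho} - \log(2\pi) - \tfrac{1}{2} \log(1 - x^{-2}),
\]
the sum running over the non-trivial zeros $\rho$ of $\zeta$. First I would record the classical truncation of this formula at height $T$, namely $\psi(x) = x - \sum_{0 < |\mathrm{Im}(\rho)| \leq T} x^\rho/\rho + R(x,T)$ with an explicit remainder $R(x,T)$ of size roughly $x \log^2 x / T$, and pass from $\psi$ to $\vartheta$ through the elementary identity $\psi(x) - \vartheta(x) = \sum_{k \geq 2} \vartheta(x^{1/k})$, which is $O(\sqrt{x})$ and hence absorbable into the target error.

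The heart of the argument is the treatment of the zero sum. For the zeros with $|\mathrm{Im}(\rho)| \leq T$ the hypothesis forces them onto the critical line, so $|x^\rho| = \sqrt{x}$ and the finite sum is bounded by $\sqrt{x} \sum_{0 < |\mathrm{Im}(\rho)| \leq T} |\rho|^{-1}$; inserting the Riemann--von Mangoldt formula for the zero-counting function this is of size $\frac{\sqrt{x}}{2\pi} \log^2 T$, which for $T$ comparable to $\sqrt{x}$ matches the target $\frac{1}{8\pi} \sqrt{x} \log^2 x$. This is exactly Schoenfeld's conditional computation leading to \eqref{2.6}, except that one now only needs the Riemann hypothesis up to the finite height $T$.

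What makes the statement delicate is that the crude truncation error $x \log^2 x / T$ is too lossy to yield the sharp constant $1/(8\pi)$ with the clean threshold encoded in $4.92 \sqrt{x/\log x} \leq T$. So rather than a sharp cut-off I would smooth $\psi$ against a carefully chosen kernel $k_\delta$, replacing the indicator of $[0,x]$ by a mollified version; in the corresponding explicit formula each zero is then weighted by the Mellin transform of $k_\delta$, which decays rapidly once $|\mathrm{Im}(\rho)|$ exceeds $1/\delta$. Choosing $\delta$ of order $\sqrt{\log x / x}$ makes this weight negligible precisely for the zeros of imaginary part above $T$, so that their possible deviation from the critical line --- bounded trivially by $\mathrm{Re}(\rho) < 1$ together with the zero-density estimate --- contributes less than the slack left in the main term. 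Balancing the smoothing error $|\psi_\delta(x) - \psi(x)|$ against this weighted tail is the main obstacle, and it is exactly here that the explicit value $4.92$ and the lower cut-offs $x \geq 599$ (respectively $x \geq 2\,657$) must be pinned down by a direct numerical optimisation.

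Finally, to obtain part (b) I would feed the resulting bound for $\psi$ (equivalently $\vartheta$) into partial summation via the identity \eqref{2.1} relating $\pi$ and $\vartheta$, so that $\pi(x) - \mathrm{li}(x)$ inherits a bound of the shape $\frac{1}{8\pi} \sqrt{x} \log x$; the factor $\log x$ saved relative to (a) comes from the division by $\log t$ in the integral representation, and the slightly larger threshold $x \geq 2\,657$ accounts for the contribution of the lower limit of integration.
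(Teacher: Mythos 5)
A point of comparison first: the paper itself does not prove this lemma at all --- it is quoted from Büthe \cite{buethe2016} and then merely applied (in Propositions \ref{prop202} and \ref{prop301}), so your proposal can only be measured against Büthe's published argument. On that score, your outline for part (a) is a reasonable reconstruction of the actual method: Büthe does work with a smoothed (kernel-weighted) version of the explicit formula, uses the partial-RH hypothesis to place the zeros with $0 < \mathrm{Im}(\rho) \leq T$ on the critical line, and uses the rapid decay of the kernel transform beyond height $T$, together with the trivial bound $\mathrm{Re}(\rho) < 1$, to control the unverified zeros; the threshold $4.92\sqrt{x/\log x} \leq T$ and the cut-offs $599$ and $2\,657$ do come from numerical optimisation. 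Your observation that a sharp truncation at height $T$ loses a factor of order $\sqrt{\log x}$ and therefore cannot produce the constant $1/(8\pi)$ is also correct, and is precisely why the smoothing is needed.

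The genuine gap is your deduction of (b) from (a). Partial summation through \eqref{2.1} gives
\begin{displaymath}
\pi(x) - \mathrm{li}(x) = \frac{\vartheta(x) - x}{\log x} + \int_2^x \frac{\vartheta(t) - t}{t \log^2 t}\, dt + C, \qquad C = \frac{2}{\log 2} - \mathrm{li}(2),
\end{displaymath}
and feeding $|\vartheta(t) - t| < \frac{1}{8\pi}\sqrt{t}\log^2 t$ into the integral by the triangle inequality yields at best
\begin{displaymath}
|\pi(x) - \mathrm{li}(x)| \leq \frac{1}{8\pi}\sqrt{x}\log x + \frac{1}{4\pi}\bigl(\sqrt{x}-\sqrt{2}\bigr) + O(1),
\end{displaymath}
since $\int_2^x t^{-1/2}\,dt = 2(\sqrt{x}-\sqrt{2})$. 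This exceeds the target $\frac{1}{8\pi}\sqrt{x}\log x$ by the additive, positive term $\frac{1}{4\pi}\sqrt{x}$ for \emph{every} $x$, and no choice of lower cut-off ($x \geq 2\,657$ or otherwise) removes it. The factor $\log x$ cannot be saved after taking absolute values of $\vartheta(t)-t$; the saving has to occur inside the zero sum, where each zero contributes $\mathrm{li}(x^\rho) \approx x^\rho/(\rho \log x)$ and where the prime-power correction $-\tfrac{1}{2}\mathrm{li}(\sqrt{x})$ enters with a definite sign. That is why von Koch, Schoenfeld and Büthe all establish the $\pi$-estimate by a separate argument applied directly to $\pi$ (respectively $\Pi$), not by partial summation from the $\vartheta$-estimate. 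The same pitfall appears, in milder form, in your remark that $\psi - \vartheta = O(\sqrt{x})$ is ``absorbable'': because the constant $1/(8\pi)$ is exact, an additive $\sqrt{x}$ cannot be absorbed into $\frac{1}{8\pi}\sqrt{x}\log^2 x$ unless the $\psi$-bound is proved with a corresponding margin to spare, which is exactly how Schoenfeld and Büthe arrange their estimates.
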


In the following proposition we also make use of Lemma \ref{lem201} to increase the number $1.4 \cdot 10^{25}$ in Büthe's result on \eqref{2.6}.

\begin{prop} \label{prop202}
The inequality \eqref{2.6} holds unconditionally for every $x$ such that $599 \leq x \leq 5.5 \cdot 10^{25}$.
\end{prop}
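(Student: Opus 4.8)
The plan is to apply Lemma~\ref{lem201}(a) in exactly the way Büthe did, but feeding in a larger height $T$ up to which the Riemann hypothesis has been verified. The only $x$-dependent hypothesis of that lemma is the condition $4.92\sqrt{x/\log x} \le T$, so the entire argument reduces to controlling the function $g(x) = 4.92\sqrt{x/\log x}$ over the range $599 \le x \le 5.5\cdot 10^{25}$. First I would record the elementary fact that $x \mapsto x/\log x$ is strictly increasing for $x > e$, its derivative being $(\log x - 1)/\log^2 x > 0$ there; hence $g$ is strictly increasing on the interval in question and attains its maximum at the right endpoint.

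A direct evaluation then gives
\[
\max_{599 \le x \le 5.5\cdot 10^{25}} g(x) = 4.92\sqrt{\frac{5.5\cdot 10^{25}}{\log(5.5\cdot 10^{25})}} < 4.75\cdot 10^{12}.
\]
The crucial input --- and the only place where I must go beyond Büthe's data --- is a \emph{rigorous} verification of the Riemann hypothesis to a height $T$ exceeding this value, whereas Büthe's bound $1.4\cdot 10^{25}$ corresponds, via the identical computation, to roughly the first $10^{13}$ nontrivial zeros, i.e.\ to $T \approx 2.4\cdot 10^{12}$. Granting such a $T$, the inequality $4.92\sqrt{x/\log x} \le g(5.5\cdot 10^{25}) < T$ holds simultaneously for every $x$ with $599 \le x \le 5.5\cdot 10^{25}$, so Lemma~\ref{lem201}(a) yields $|\vartheta(x) - x| < \tfrac{1}{8\pi}\sqrt{x}\log^2 x$ throughout that range, which is precisely \eqref{2.6}.

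I expect the analytic content to be entirely routine: the monotonicity and the single endpoint evaluation are elementary, and no fresh estimate for $\vartheta$ is generated. The real obstacle lies outside the analysis, in securing and correctly citing a rigorous (not merely heuristic) verification of the Riemann hypothesis up to the required height $T > 4.75\cdot 10^{12}$, together with the assurance that Lemma~\ref{lem201} legitimately applies with that $T$. A minor but necessary check is that the round cutoff $5.5\cdot 10^{25}$ sits strictly below the exact threshold solving $g(x) = T$, so that numerical slack in the endpoint evaluation cannot spoil the claim at the boundary.
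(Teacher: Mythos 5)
Your reduction is the same one the paper uses: monotonicity of $x \mapsto 4.92\sqrt{x/\log x}$, a single evaluation at the right endpoint $x_0 = 5.5\cdot 10^{25}$ (your bound $< 4.75\cdot 10^{12}$ is correct), and then an appeal to Lemma~\ref{lem201}(a). However, your proposal stops exactly where the actual mathematical content of the paper's proof begins: you write ``granting such a $T$'' and defer the rigorous verification of the Riemann hypothesis up to height $T > 4.75\cdot 10^{12}$ to an unnamed external source. That is not a citable fact in the form you need it. All large-scale verifications in the literature (including the one the paper invokes, Gourdon's computation \cite{gourdon}) are stated as ``the first $N$ nontrivial zeros satisfy RH,'' i.e.\ they are indexed by zero \emph{count}, not by zero \emph{height}, whereas Lemma~\ref{lem201} requires RH for all zeros with $0 < \mathrm{Im}(\rho) \leq T$.

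The missing idea is the conversion between these two formulations, and it is not automatic: to conclude that every zero of height at most $T_0$ is among the first $N$ verified zeros, one needs a rigorous \emph{upper} bound on the zero-counting function $N(T)$. The paper supplies this via Trudgian's explicit estimate \eqref{2.7}: taking $T_0 = 4\,768\,099\,715\,087$, inequality \eqref{2.7} gives $N(T_0) \leq 2\cdot 10^{13}$, and since Gourdon verified that the first $2\cdot 10^{13}$ zeros lie on the critical line, RH holds for all zeros up to height $T_0$; one then checks $4.92\sqrt{x_0/\log x_0} \leq T_0$ and applies Lemma~\ref{lem201}. Your parenthetical remark that Büthe's cutoff ``corresponds, via the identical computation, to roughly the first $10^{13}$ zeros'' shows you are aware of the count--height dictionary, but a heuristic correspondence (using, say, the asymptotic $N(T) \sim \frac{T}{2\pi}\log\frac{T}{2\pi e}$ without error control) cannot be fed into Lemma~\ref{lem201}; the explicit, unconditional inequality \eqref{2.7} is what makes the step legitimate, and without it the proof is incomplete.
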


\begin{proof}
Let $N(T)$ be the number of complex zeros $\rho$ of the Riemann zeta function $\zeta(s)$ satisfying $0 < \text{Im}(\rho) \leq T$. Trudgian \cite[Corollary
1]{trudgian2014} found that $N(T)$ is bounded by
\begin{equation}
N(T) \leq \frac{T}{2\pi} \log \frac{T}{2\pi e} + \frac{7}{8} + 0.112 \log T + 0.278 \log \log T + 2.51 + \frac{0.2}{T} \tag{2.7} \label{2.7}
\end{equation}
for every $T \geq e$. Setting $T_0 = 4\,768\,099\,715\,087$, we use \eqref{2.7} to get
\begin{equation}
N(T_0) \leq 2 \cdot 10^{13}; \tag{2.8} \label{2.8}
\end{equation}
i.e. there are at most $2 \cdot 10^{13}$ complex zeros $\rho$ of the Riemann zeta function $\zeta(s)$ satisfying $0 < \text{Im}(\rho) \leq T_0$. By 
\cite{gourdon}, 
the first $2 \cdot 10^{13}$ zeros of the Riemann zeta function satisfy the Riemann hypothesis. Together with 
\eqref{2.8}, we 
obtain that the Riemann hypothesis holds for every complex zeros $\rho$ such that $0 < \text{Im}(\rho) \leq T_0$. Now, we set $x_0 = 5.5 \cdot 10^{25}$ to get 
$4.92 \sqrt{x_0/\log x_0} \leq T_0$ and it remains to apply Lemma \ref{lem201}.
\end{proof}

In the direction of \eqref{2.5}, Dusart found in \cite[Theorem 5.2]{dusart2010} and \cite[Theorem 4.2]{dusart2017} the following explicit estimates for the
distance between $x$ and $\vartheta(x)$.

\begin{lem}[Dusart, \cite{dusart2010}, \cite{dusart2017}] \label{lem203}
We have
\begin{displaymath}
\vert \vartheta(x) - x \vert < \frac{\eta_k x}{\log^{k} x}
\end{displaymath}
for every $x \geq x_0(k)$ with
\begin{center}
\begin{tabular}{|l||c|c|c|c|c|c|c|c|c|c|}
\hline
$k$\rule{0mm}{4mm}         & $             1$ & $           2$ &$2$ & $3$& $        3$ & $4$  \\ \hline
$\eta_k$\rule{0mm}{4mm} & $       0.001$ & $        0.2$ &$0.01$ & $1$&$   0.5  $ & $151.3$  \\ \hline
$x_0(k)$\rule{0mm}{4mm}& $908\,994\,923$ & $3\,594\,641$ & $7\,713\,133\,853$ & $89\,967\,803$ & $767\,135\,587$ & $2$ \\ \hline
\end{tabular} \ .
\end{center}
\vspace{1mm}
\end{lem}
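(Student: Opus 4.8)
Since this is a compilation of explicit bounds already established by Dusart, the plan is not to reprove the inequalities from scratch but to match each row of the table to the result that supplies it. First I would work through the two cited papers and identify, for each triple $(k,\eta_k,x_0(k))$, the precise theorem it quotes; I would expect the sharper constants (with the larger thresholds) to come from the more recent Theorem 4.2 of \cite{dusart2017}, and the coarser bounds---in particular the $k=4$ inequality with $\eta_4 = 151.3$ valid already from $x_0 = 2$---to come from Theorem 5.2 of \cite{dusart2010}. Once every row is located, the proof is complete by citation.

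To see where such inequalities originate, I would sketch Dusart's underlying method. The starting point is the Riemann--von Mangoldt explicit formula $\psi(x) = x - \sum_\rho x^\rho/\rho - \log(2\pi) - \tfrac12\log(1 - x^{-2})$ for $\psi(x) = \sum_{p^m \leq x}\log p$, where $\rho$ runs over the nontrivial zeros of $\zeta(s)$. The sum over zeros is split at a height $T$: the low-lying zeros with $|\mathrm{Im}(\rho)| \leq T$ are handled using the numerical verification that they lie on the critical line, so that each contributes $O(\sqrt{x}/|\rho|)$, while the tail $|\mathrm{Im}(\rho)| > T$ is controlled by the classical zero-free region together with a zero-counting bound of the kind recorded in \eqref{2.7}. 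Optimizing $T$ as a function of $x$ produces an estimate of the form $|\psi(x) - x| < \varepsilon_k\, x/\log^k x$ on a suitable range.

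Passing from $\psi$ to $\vartheta$ is the routine step: since $\psi(x) - \vartheta(x) = \sum_{m \geq 2}\vartheta(x^{1/m}) = O(\sqrt{x})$, the difference is absorbed into a negligible adjustment of $\eta_k$ throughout the relevant ranges, and each threshold $x_0(k)$ is then pinned down by combining the asymptotic inequality with a direct computation of $\vartheta(x)$ up to the point where the stated bound takes over. The genuine difficulty---and the real content of the cited theorems---is the numerical optimization: selecting $T$, carrying out the estimates with rigorous interval arithmetic, and driving each $\eta_k$ as low as possible without letting $x_0(k)$ grow out of control. That balancing act, rather than any individual inequality, is where the work lies.
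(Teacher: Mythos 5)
Your approach matches the paper's exactly: the paper gives no proof of this lemma at all, stating it purely as a citation of \cite[Theorem 5.2]{dusart2010} and \cite[Theorem 4.2]{dusart2017}, which is precisely your row-by-row attribution to those two theorems. Your additional sketch of Dusart's underlying method (explicit formula, splitting the zero sum at height $T$, numerical verification of low zeros, and the $\psi$-to-$\vartheta$ transfer) is accurate background but not required, since the lemma's content rests entirely on the cited results.
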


In the following theorem, we find the corresponding value $x_0$ for the case $k = 3$ and $\eta_3 = 0.15$. In the proof, we use explicit estimates for 
Chebyshev's $\psi$-function, which is defined by 
\begin{displaymath}
\psi(x) = \sum_{p^m \leq x} \log p.
\end{displaymath}

\begin{thm} \label{thm204}
For every $x \geq 19\,035\,709\,163 = p_{841\,508\,302}$, we have
\begin{equation}
\vartheta(x) > x - \frac{0.15x}{\log^3 x}, \tag{2.9} \label{2.9}
\end{equation}
and for every $x > 1$, we have
\begin{equation}
\vartheta(x) < x + \frac{0.15x}{\log^3 x}. \tag{2.10} \label{2.10}
\end{equation}
\end{thm}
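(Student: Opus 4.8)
The plan is to establish the single two-sided bound $|\vartheta(x)-x| < 0.15\,x/\log^3 x$ on the ranges in question, but to argue the two inequalities separately: the upper bound \eqref{2.10} is cushioned by the elementary $\vartheta(x)\le\psi(x)$, whereas the lower bound \eqref{2.9} forces control of the difference $\psi(x)-\vartheta(x)=\sum_{m\ge 2}\vartheta(x^{1/m})$. I would partition $(1,\infty)$ into a very large regime $x>5.5\cdot10^{25}$, the verified window $x_1\le x\le 5.5\cdot10^{25}$, and an intermediate window $p_{841\,508\,302}\le x\le x_1$, where $x_1\approx e^{32}\approx 10^{14}$ is the first point at which Proposition \ref{prop202} becomes strong enough.

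On the verified window I would quote Proposition \ref{prop202} directly: it gives $|\vartheta(x)-x|<\tfrac{1}{8\pi}\sqrt x\log^2 x$, and since $\tfrac{1}{8\pi}\sqrt x\log^2 x\le 0.15\,x/\log^3 x$ is equivalent to $\log^5 x\le 1.2\pi\sqrt x$, this holds for $x\ge x_1$ and yields both \eqref{2.9} and \eqref{2.10} at once on $[x_1,5.5\cdot10^{25}]$. For $x>5.5\cdot10^{25}$ I would switch to explicit estimates for $\psi$: for \eqref{2.10} use $\vartheta(x)\le\psi(x)\le x+c\,x/\log^3 x$ with a constant $c<0.15$, and for \eqref{2.9} write $\vartheta(x)=\psi(x)-\sum_{m\ge 2}\vartheta(x^{1/m})\ge x-c\,x/\log^3 x-C\sqrt x$, the correction $\sum_{m\ge 2}\vartheta(x^{1/m})\le C\sqrt x$ being utterly negligible against $x/\log^3 x$ in this regime; choosing $c$ and the threshold so the total stays under $0.15$ closes both directions.

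The upper bound \eqref{2.10} on the remaining small range $1<x\le x_1$ is easy, since the added term $0.15\,x/\log^3 x$ is positive and it is known that $\vartheta(x)<x$ throughout this range (for $1<x<2$ one simply has $\vartheta(x)=0$); this, together with the two regimes above, gives \eqref{2.10} for every $x>1$.

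The genuine difficulty is \eqref{2.9} on the intermediate window $[p_{841\,508\,302},x_1]$, where Proposition \ref{prop202} carries a spurious factor $\log^2 x$ and is just too weak, while the unconditional $\psi$-estimates are not yet small enough; here the target deficit $0.15\,x/\log^3 x$ is only about $1.2\sqrt x$ near $2\cdot10^{10}$, so the bound is genuinely tight and cannot follow from any smooth worst-case estimate. I would discharge it by a finite verification that exploits the step structure: since $\vartheta$ is non-decreasing and $f(x)=x-0.15\,x/\log^3 x$ is increasing, $\vartheta(x)>f(x)$ on $[p_n,p_{n+1})$ is equivalent to $\vartheta(p_n)\ge f(p_{n+1})$, i.e. to $q-\vartheta(q)<0.15\,q/\log^3 q-\log q$ at each prime $q$; checking this over a sufficiently fine grid of tabulated values of $\vartheta$ reduces the claim to a large but finite computation, and the value $p_{841\,508\,302}$ then emerges as the last prime below which the inequality fails. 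I expect essentially all of the work — reconciling the tightness of the constant $0.15$ with the true size of $x-\vartheta(x)$ near $2\cdot 10^{10}$ — to lie in this step.
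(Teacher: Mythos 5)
Your decomposition is viable in outline, and at both ends it coincides with the paper's own proof: the upper bound \eqref{2.10} below $10^{19}$ via Büthe's $\vartheta(x)<x$, and the lower bound near the threshold via monotonicity of $f(x)=x(1-0.15/\log^3x)$ plus a prime-by-prime check of $\vartheta(p_n)\geq f(p_{n+1})$. Your treatment of the middle is genuinely different: the paper never uses Proposition \ref{prop202} here, but instead covers all of $[e^{32},\infty)$ by unconditional estimates of Dusart --- the bound \eqref{2.12} for $x\geq e^{5000}$, the piecewise bounds \eqref{2.13} with Table 5.2 of \cite{dusart2017} for $e^{1500}\leq x\leq e^{5000}$, and Tables 6.4 and 6.5 of \cite{dusart2010} for $e^{32}\leq x<e^{1500}$. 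Your use of Proposition \ref{prop202} on $[x_1,5.5\cdot10^{25}]$ is correct (the reduction to $\log^5x\leq1.2\pi\sqrt{x}$ is right; it fails marginally at $e^{32}$ itself, so $x_1$ sits slightly above $e^{32}$, which is consistent with your definition of $x_1$ as the crossover). But for $x>5.5\cdot10^{25}$ your ``explicit estimates for $\psi$ with a constant $c<0.15$'' is not an off-the-shelf citation: no single such bound valid on the whole tail exists in the literature, and producing one is precisely the three-source assembly just described, i.e.\ the first step of the paper's proof. Since $5.5\cdot10^{25}\approx e^{59.3}$, you would still need Dusart's tables on $[e^{59.3},e^{1500}]$, Table 5.2 up to $e^{5000}$, and \eqref{2.12} beyond; so the Proposition \ref{prop202} idea only spares you the tables on $[e^{32},e^{59.3}]$, and the core work in the tail has been black-boxed rather than replaced.

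The more serious flaw is your assertion that on the window $[p_{841\,508\,302},x_1]$ the bound ``cannot follow from any smooth worst-case estimate,'' and the resulting plan to verify $\vartheta(p_n)\geq f(p_{n+1})$ at every prime up to $x_1\approx10^{14}$. The very theorem of Büthe \cite{buethe} you invoke for $\vartheta(x)<x$ also contains the smooth lower bound $\vartheta(t)\geq t-1.95\sqrt{t}$ for $1\,423\leq t\leq10^{19}$, and since $0.15\sqrt{t}>1.95\log^3t$ for $t\geq34\,485\,879\,392$, this single estimate settles \eqref{2.9} on all of $[3.45\cdot10^{10},x_1]$. The paper then uses a further $\psi$-based bound of Büthe to reach down to $29\,946\,085\,320$, so that the actual computation runs only over the primes in $[1.9\cdot10^{10},3\cdot10^{10}]$, roughly $4.6\cdot10^{8}$ of them. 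You are right that the inequality is tight at the very bottom --- the allowed deficit near $2\cdot10^{10}$ is about $1.6\sqrt{x}$ (not $1.2\sqrt{x}$), against Büthe's $1.95\sqrt{t}$, so a computation there is unavoidable --- but the computational region ends near $3\cdot10^{10}$, not $10^{14}$. Your version requires enumerating all $\approx3.2\cdot10^{12}$ primes below $10^{14}$ and summing their logarithms to adequate precision, a computation several thousand times larger than the paper's, whose feasibility you assert rather than address; and replacing primes by a ``grid'' of tabulated values does not help, since computing $\vartheta$ at the grid points already requires the full enumeration, and near the threshold the slack $\vartheta(x)-f(x)$ tends to zero, forcing the grid to be prime-fine exactly where it matters.
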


\begin{proof}
First, we check that the inequality
\begin{equation}
|\vartheta(x) - x| < \frac{0.15x}{\log^3 x} \tag{2.11} \label{2.11}
\end{equation}
holds for every $x \geq e^{32}$. By Dusart \cite[Corollary 1.2]{dusart2016}, we have
\begin{equation}
|\vartheta(x) - x| < \frac{\sqrt{8}}{\sqrt{\pi \sqrt{R}}} \, x(\log x)^{1/4} e^{-\sqrt{(\log x)/R}} \tag{2.12} \label{2.12}
\end{equation}
for every $x \geq 3$, where $R = 5.69693$. Since $g(x) = (\log x)^{13/4} e^{-\sqrt{(\log x)/R}}$ is a monotonic decreasing function for every $x 
\geq e^{169R/4}$, we get that
\begin{displaymath}
|\vartheta(x) - x| < \frac{\sqrt{8}}{\sqrt{\pi \sqrt{R}}} \; g(e^{5\,000})\; \frac{x}{\log^3 x} < \frac{0.148x}{\log^3 x}
\end{displaymath}
for every $x \geq e^{5000}$. Using \cite[Corollary 4.5]{dusart2017}, we get
\begin{equation}
|\vartheta(x) - x| < \left( \frac{(1 + 1.47 \cdot 10^{-7}) b_i^3}{\sqrt{e^{b_i}}} + \frac{1.78b_i^3}{\sqrt[3]{e^{2b_i}}} +\varepsilon_i b_{i+1}^3 \right) 
\frac{x}{\log^3 x} \tag{2.13} \label{2.13}
\end{equation}
for $e^{b_i} \leq x \leq e^{b_{i+1}}$, where $b_i$ and the corresponding $\varepsilon_i$ are given in Table 5.2 of \cite{dusart2017}. Substituting $b_{31} = 
1\,500, b_{32} = 2\,000, b_{33} = 2\,500, b_{34} = 3\,000, b_{35} = 3\,500, b_{36} = 4\,000, b_{37} = 4\,500$ and the corresponding values of $\varepsilon_i$ in 
\eqref{2.13}, we obtain that the inequality \eqref{2.11} also holds for every $e^{1500} \leq x \leq e^{5000}$. From Tables 6.4 and 6.5 of \cite{dusart2010}, it 
follows that the inequality \eqref{2.11} holds for every $x$ such that $e^{32} \leq x < e^{1500}$. 

So, to prove that \eqref{2.9} holds for every $x \geq 19\,035\,709\,163$, it remains to deal with the case where $19\,035\,709\,163 \leq x \leq e^{32}$. By 
Büthe \cite[Theorem 2]{buethe}, we have $\vartheta(t) \geq t - 1.95\sqrt{t}$ for every $t$ such that $1\,423 \leq t \leq 10^{19}$. Since $0.15\sqrt{t} > 1.95 
\log^3 t$ for every $t \geq 34\,485\,879\,392$, we get that the inequality \eqref{2.9} holds also for every $x$ such that $34\,485\,879\,392 \leq x \leq 
e^{32}$. In addition, Büthe \cite[p. 13]{buethe} found that $-0.8 \leq (t - \psi(t))/\sqrt{t} \leq 0.81$ for every $t$ such that $100 \leq t \leq 5 \cdot 
10^{10}$. Now, we use Lemma 1 of \cite{buethe} to get
\begin{displaymath}
\vartheta(t) \geq t - 1.81\sqrt{t} - 0.8t^{1/4} - 1.03883(t^{1/3} + t^{1/5} + 2 t^{1/13}\log t)
\end{displaymath}
for every $t$ such that $10\,000 \leq t \leq 5 \cdot 10^{10}$. Since $t^{1/5} + 2 t^{1/13}\log t \leq t^{1/3}$ for every $t \geq 783\,674$, we get
\begin{equation}
\vartheta(t) \geq t - 1.81\sqrt{t} - 0.8t^{1/4} - 2 \cdot 1.03883t^{1/3} \tag{2.14} \label{2.14}
\end{equation}
for every $t$ such that $783\,674 \leq t \leq 5 \cdot 10^{10}$. Now, we notice that $0.15t/\log^3 t \geq 1.81\sqrt{t} + 0.8t^{1/4} + 2 \cdot 1.03883t^{1/3}$ 
for every $t \geq 29\,946\,085\,320$. Hence, by \eqref{2.14}, the inequality \eqref{2.9} is fulfilled for every $x$ such that $29\,946\,085\,320 \leq x \leq 
34\,485\,879\,392$ as well. To prove that the inequality \eqref{2.9} is also valid for every $x$ such that $19\,035\,709\,163 \leq x < 29\,946\,085\,320$, we 
set $f(x) = x(1-0.15/\log^3x)$. Since $f$ is a strictly increasing function on $(1, \infty)$, it suffices to check with a computer that $\vartheta(p_n) > 
f(p_{n+1})$ for every positive integer $n$ such that $\pi(19\,035\,709\,163) \leq n \leq \pi(29\,946\,085\,320)$.

Now, we show that \eqref{2.10} for every $x > 1$. Using the inequality \eqref{2.11}, it suffices to prove that \eqref{2.10} holds for every $x$ such that $1 < 
x < e^{32}$. For this, we use another result of Büthe \cite[Theorem 2]{buethe}. He found that $\vartheta(x) < x$ for every $x$ such that $1 \leq x \leq 
10^{19}$, which clearly implies that the inequality \eqref{2.10} holds for every $x$ such that $1 < x < e^{32}$.
\end{proof}

\begin{rema}
In \cite[Proposition 3.2]{axler2016} it is shown that $\vartheta(x) > x - 0.35x/\log^3 x$ for every $x \geq e^{30}$. Using Theorem \ref{thm204}, we get that 
this inequality also holds for every $x$ such that $19\,035\,709\,163 \leq x \leq e^{30}$. A computer check gives that the inequality
\begin{displaymath}
\vartheta(x) > x - \frac{0.35x}{\log^3 x}
\end{displaymath}
holds for every $x \geq 1\,332\,492\,593$.
\end{rema}

In the next proposition, we give a slightly improvement of Lemma \ref{lem203} for the case $k = 4$, which improves the inequality \eqref{2.9} for every $x \geq
e^{666 + 2/3}$.

\begin{prop} \label{prop205}
For every $x \geq 70\,111$, we have
\begin{equation}
|\vartheta(x) - x| < \frac{100x}{\log^4 x}. \tag{2.15} \label{2.15}
\end{equation}
\end{prop}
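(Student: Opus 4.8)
The plan is to split the range $x \ge 70\,111$ at the value $x = e^{666+2/3}$, at which the two competing bounds coincide: indeed $0.15 x/\log^3 x \le 100 x/\log^4 x$ is equivalent to $0.15 \log x \le 100$, i.e. to $\log x \le 100/0.15 = 666 + 2/3$. Consequently, for $19\,035\,709\,163 \le x \le e^{666+2/3}$ the lower bound in \eqref{2.15} follows at once from \eqref{2.9}, since there $\vartheta(x) > x - 0.15x/\log^3 x \ge x - 100 x/\log^4 x$. The upper half of \eqref{2.15} is essentially free in this regime as well: by Büthe \cite[Theorem 2]{buethe} we have $\vartheta(x) < x$ for $1 \le x \le 10^{19}$, and for $10^{19} \le x \le e^{666+2/3}$ the inequality $\vartheta(x) < x + 0.15x/\log^3 x \le x + 100x/\log^4 x$ again comes from \eqref{2.10}. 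Thus the only genuinely new work lies in the large range $x \ge e^{666+2/3}$ (where \eqref{2.9} and \eqref{2.10} are too weak) and in the finite range $70\,111 \le x \le 19\,035\,709\,163$.

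For the large range I would argue exactly as in the proof of Theorem \ref{thm204}, but tracking $\log^4 x$ instead of $\log^3 x$. Writing $g_4(x) = (\log x)^{17/4} e^{-\sqrt{(\log x)/R}}$ with $R = 5.69693$, the estimate \eqref{2.12} gives $|\vartheta(x) - x| < \frac{\sqrt 8}{\sqrt{\pi\sqrt R}}\, g_4(x)\, \frac{x}{\log^4 x}$. A short calculation shows $g_4$ is decreasing for $x \ge e^{289R/4}$, so it suffices to evaluate the constant $\frac{\sqrt 8}{\sqrt{\pi \sqrt R}}\, g_4$ at one large point: one finds this stays below $100$ from some explicit bound onwards (around $x \ge e^{6000}$), which yields \eqref{2.15} for all such $x$. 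The remaining window $e^{666+2/3} \le x \le e^{6000}$ I would cover with the table-based bound \eqref{2.13}: on each tabulated interval $e^{b_i} \le x \le e^{b_{i+1}}$ its coefficient, say $A_i$, is constant, so \eqref{2.15} reduces to the finitely many checks $A_i \log x < 100$, and since $\log x \le b_{i+1}$ there it is enough to verify $A_i b_{i+1} < 100$ for each $i$.

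For the finite range I would again follow the pattern of Theorem \ref{thm204}. Büthe's bound $\vartheta(t) \ge t - 1.95\sqrt t$ (valid for $1\,423 \le t \le 10^{19}$) gives the lower half of \eqref{2.15} whenever $1.95\log^4 t < 100\sqrt t$, which holds from roughly $t \approx 1.65 \cdot 10^5$ upward; the sharper $\psi$-based estimate \eqref{2.14} handles $783\,674 \le x \le 5\cdot 10^{10}$, an interval that overlaps the range of Theorem \ref{thm204}, so these two together close the gap up to $19\,035\,709\,163$. The upper half of \eqref{2.15} is immediate here from $\vartheta(x) < x$. It then remains to treat the short window from $70\,111$ up to about $1.65\cdot 10^5$, where $1.95\sqrt t$ (and even \eqref{2.14}) is not sharp enough; since $f(x) = x(1 - 100/\log^4 x)$ is strictly increasing on this window, I would finish with a direct computer check that $\vartheta(p_n) > f(p_{n+1})$ for every prime $p_n$ in it.

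The main obstacle is the large range. The small-range computer verification is routine (it covers only a few thousand primes), and the reduction via Theorem \ref{thm204} is immediate; but the estimate \eqref{2.12} only drops below the threshold $100$ fairly late (near $x = e^{6000}$), so the delicate point is to make sure the tabulated intervals used in \eqref{2.13} reach far enough to meet that crossover, leaving no gap between the largest available $e^{b_{i+1}}$ and the point where the monotone tail bound from \eqref{2.12} takes over.
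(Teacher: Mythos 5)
Your proposal is correct, and on the large ranges it coincides with the paper's own proof: both dispose of the tail $x \geq e^{6000}$ by inserting the extra power of $\log x$ into \eqref{2.12} and using monotonicity of $(\log x)^{17/4}e^{-\sqrt{(\log x)/R}}$, and both cover the middle range with the tabulated bounds \eqref{2.13} --- the paper uses Table 5.2 of \cite{dusart2017} on $[e^{1000}, e^{6000}]$ and Tables 6.4 and 6.5 of \cite{dusart2010} on $[e^{23}, e^{1000}]$, so the gap you worry about in your final paragraph does not arise: the tables reach well below your crossover point $e^{666+2/3}$. The genuine divergence is below $e^{23}$. The paper observes that $1/\log^3 t < 100/\log^4 t$ for $1 < t \leq e^{100}$ and applies Lemma \ref{lem203} with $k = 3$, $\eta_3 = 1$ (valid from $89\,967\,803$ on), so its closing computer verification $\vartheta(p_n) > f(p_{n+1})$ must run over all of $[70\,111,\, 89\,967\,803]$, roughly five million primes; you instead use Theorem \ref{thm204} down to $19\,035\,709\,163$ and then Büthe's bounds $\vartheta(t) \geq t - 1.95\sqrt{t}$ and $\vartheta(t) < t$ on $[1\,423, 10^{19}]$ to descend to roughly $1.65 \cdot 10^5$, below which $1.95\log^4 t < 100\sqrt{t}$ fails, leaving a computer check over only a few thousand primes. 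Your route therefore buys a substantially smaller computation, at no real cost, since Büthe's theorem is already needed in the proof of Theorem \ref{thm204}. Two minor remarks: your reduction of \eqref{2.13} to the finitely many tests $A_i b_{i+1} < 100$ is slightly lossier than restating \eqref{2.13} with fourth powers, but harmlessly so, since the term $\varepsilon_i b_{i+1}^3$ dominates on the relevant intervals; and your appeal to \eqref{2.14} is redundant, as the $1.95\sqrt{t}$ bound alone already covers everything between $1.65\cdot 10^5$ and $10^{19}$.
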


\begin{proof}
Let $R = 5.69693$. We use \eqref{2.12} to get that $|\vartheta(x) - x| < 100x/\log^4 x$ for every $x \geq e^{6000}$.
Similarly to the proof of Theorem \ref{thm204}, we check with Table 5.2 of \cite{dusart2017} that the inequality \eqref{2.15} holds for every $x$ such that 
$e^{1000} \leq x \leq e^{6000}$ as well. From Tables 6.4 and 6.5 of \cite{dusart2010}, it follows that the required inequality holds for every $x$ such that 
$e^{23} \leq x < e^{1000}$. Finally, we obtain that $1/\log^3t < 100/\log^4t$ for every $t$ such that $1 < t \leq e^{100}$ and so, Lemma \ref{lem203} implies 
the validity of the required inequality for every $x$ such that $89\,967\,803 \leq x \leq e^{23}$. To prove that the inequality \eqref{2.15} is also fulfilled 
for every $x$ such that $70\,111 \leq x < 89\,967\,803$, we set $f(x) = x(1-100/\log^4x)$. Since $f$ is a strictly increasing function for every $x > 1$, it is 
enough to check with a computer that $\vartheta(p_n) > f(p_{n+1})$ for every positive integer $n$ such that $\pi(70\,111) \leq n \leq \pi(89\,967\,803)$.
\end{proof}

\section{New estimates for the prime counting function}

Under the assumption that the Riemann hypothesis is true, von Koch \cite{koch1901} deduced a remarkable refinement of error term in the Prime Theorem, which is
given by
\begin{displaymath}
\pi(x) = \text{li}(x) + O(\sqrt{x} \log x).
\end{displaymath}
A precise version of Koch's result is due to Schoenfeld \cite[Corollary 1]{schoenfeld1976}. He found under the assumption that the Riemann hypothesis is true 
that the inequality
\begin{equation}
|\pi(x) - \text{li}(x)| < \frac{1}{8\pi}\, \sqrt{x} \log x \tag{3.1} \label{3.1}
\end{equation}
holds for every $x \geq 2\,657$. In 2014, Büthe \cite[p. 2495]{buethe2016} showed that the inequality \eqref{3.1} holds unconditionally for every $x$ such that
$2\,657 \leq x \leq 1.4 \cdot 10^{25}$. The following proposition gives a slightly improvement of Büthe's result.

\begin{prop} \label{prop301}
The inequality \eqref{3.1} holds unconditionally for every $x$ such that $2\,657 \leq x \leq 5.5 \cdot 10^{25}$.
\end{prop}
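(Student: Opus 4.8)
The plan is to mirror, almost verbatim, the argument already used in the proof of Proposition \ref{prop202}, the only change being that I invoke part (b) of Lemma \ref{lem201} in place of part (a). The key structural observation is that both parts of Büthe's lemma have exactly the same hypotheses: the Riemann hypothesis must hold for all zeros $\rho$ with $0 < \text{Im}(\rho) \leq T$, and one needs $4.92\sqrt{x/\log x} \leq T$. Proposition \ref{prop202} established, via Trudgian's zero-counting bound \eqref{2.7} and Gourdon's numerical verification, that the Riemann hypothesis holds for every complex zero $\rho$ with $0 < \text{Im}(\rho) \leq T_0$, where $T_0 = 4\,768\,099\,715\,087$. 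That input is exactly what is needed here as well, so I would simply reuse it rather than reprove it.

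Concretely, I would first recall from the proof of Proposition \ref{prop202} that the bound \eqref{2.8}, namely $N(T_0) \leq 2 \cdot 10^{13}$, together with Gourdon's result that the first $2 \cdot 10^{13}$ zeros of $\zeta(s)$ lie on the critical line, guarantees that the Riemann hypothesis holds for every $\rho$ with $0 < \text{Im}(\rho) \leq T_0$. Next I would set $x_0 = 5.5 \cdot 10^{25}$ and verify the single numerical inequality $4.92\sqrt{x_0/\log x_0} \leq T_0$, which is the identical check performed in Proposition \ref{prop202}. Since the function $x \mapsto x/\log x$ is increasing for $x > e$, this immediately yields $4.92\sqrt{x/\log x} \leq T_0$ for every $x$ with $x \leq x_0$, so the hypothesis of Lemma \ref{lem201} is satisfied throughout the relevant range.

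Finally, I would apply part (b) of Lemma \ref{lem201}, which is valid for every $x \geq 2\,657$ under that hypothesis, to conclude that \eqref{3.1} holds unconditionally for every $x$ with $2\,657 \leq x \leq 5.5 \cdot 10^{25}$. Note that the lower endpoint $2\,657$ here comes directly from part (b) of the lemma, exactly as the endpoint $599$ arose from part (a) in Proposition \ref{prop202}; no separate treatment of small $x$ is required.

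I do not anticipate any genuine obstacle: the entire content is packaged inside Lemma \ref{lem201} and inside the zero-density verification already carried out for Proposition \ref{prop202}. The only thing to be careful about is confirming the monotonicity of $x/\log x$ on the whole interval so that verifying the condition at the single point $x_0$ suffices, and making sure the numerical evaluation of $4.92\sqrt{x_0/\log x_0}$ indeed falls below $T_0$ with room to spare.
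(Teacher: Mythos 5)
Your proposal is correct and takes exactly the paper's approach: the paper's entire proof of Proposition \ref{prop301} is the sentence ``Similar to the proof of Proposition \ref{prop202},'' and your write-up is precisely that argument spelled out, reusing the zero-verification up to $T_0 = 4\,768\,099\,715\,087$ and invoking part (b) of Lemma \ref{lem201} in place of part (a). Your explicit remark that the monotonicity of $x/\log x$ lets a single check at $x_0 = 5.5\cdot 10^{25}$ cover the whole range is a detail the paper leaves implicit, and it is a welcome one.
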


\begin{proof}
Similar to the proof of Proposition \ref{prop202}.
\end{proof}

Now, let $k$ be a positive integer and let $\eta_k, x_1(k)$ be positive real numbers with $x_1(k) \geq 2$ so that
\begin{displaymath}
\vert \vartheta(x) - x \vert < \frac{\eta_k x}{\log^{k} x}
\end{displaymath}
for every $x \geq x_1(k)$. Together with \eqref{2.1}, we get
\begin{equation}
J_{k,-\eta_k,x_1(k)}(x) \leq \pi(x) \leq J_{k,\eta_k,x_1(k)}(x) \tag{3.2} \label{3.2}
\end{equation}
for every $x \geq x_1(k)$, where
\begin{displaymath}
J_{k,\eta_k,x_1(k)}(x) = \pi(x_1(k)) - \frac{\vartheta(x_1(k))}{\log x_1(k)} + \frac{x}{\log x} + \frac{\eta_k x}{\log^{k+1} x} + \int_{x_1(k)}^{x}{\left(
\frac{1}{\log^{2} t} + \frac{\eta_k}{\log^{k+2} t} \ dt \right)}.
\end{displaymath}
The function $J_{k,\eta_k,x_1(k)}$ was already introduced by Rosser and Schoenfeld \cite[p.81]{rosser1962} (for the case $k=1$) and Dusart \cite[p. 
9]{dusart2010}. In this section, we use \eqref{3.2} and the estimates for Chebyshev's $\vartheta$-function obtained in the previous section to establish new 
explicit estimates for the prime counting function $\pi(x)$.

\subsection{New upper bounds for the prime counting function}

First we recall that Panaitopol \cite[p. 55]{pan3} gave the asymptotic formula
\begin{displaymath}
\pi(x) = \frac{x}{ \log x - 1 - \frac{k_1}{\log x} - \frac{k_2}{\log^2x} - \ldots -  \frac{k_{m}}{\log^{m} x}} + O \left( \frac{x}{\log^{m+2}x}  \right),
\end{displaymath}
where $m$ is a positive integer and $k_1, \ldots, k_m$ are defined by the recurrence formula
\begin{displaymath}
k_m + 1!k_{m-1} + 2!k_{m-2} + \ldots + (m-1)!k_1 = m \cdot m!.
\end{displaymath}
For instance, we have
\begin{displaymath}
\pi(x) = \frac{x}{ \log x - 1 - \frac{1}{\log x} - \frac{3}{\log^2x} - \frac{13}{\log^3x} - \frac{71}{\log^4x} - \frac{461}{\log^5x} - \frac{3441}{\log^6x}} +
O \left( \frac{x}{\log^8x}  \right).
\end{displaymath}
In this direction, Theorem \ref{thm204} implies the following upper bound for the prime counting function.

\begin{thm} \label{thm302}
For every $x \geq 49$, we have
\begin{equation}
\pi(x) < \frac{x}{\log x - 1 - \frac{1}{\log x} - \frac{3.15}{\log^2x} - \frac{12.85}{\log^3x} - \frac{71.3}{\log^4x} - \frac{463.2275}{\log^5x} - 
\frac{4585}{\log^6x}}. \tag{3.3} \label{3.3}
\end{equation}
\end{thm}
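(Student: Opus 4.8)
The plan is to combine the upper estimate for $\vartheta$ from Theorem \ref{thm204} with the identity \eqref{2.1}, and then to reduce the claim to an elementary inequality in the single variable $u = 1/\log x$. Write $x_1 = 19\,035\,709\,163$. Since $\vartheta(t) < t + 0.15\,t/\log^3 t$ for all $t > 1$ by \eqref{2.10}, the upper half of \eqref{3.2} (which uses only the upper bound on $\vartheta$) gives $\pi(x) < J_{3,0.15,x_1}(x)$ for every $x \ge x_1$. Abbreviating the denominator of \eqref{3.3} by $D(x)$, note that $D(x) > 0$ for $x \ge 49$; hence \eqref{3.3} is equivalent to $\pi(x)\,D(x) < x$, and since the bound $\pi(x) < J_{3,0.15,x_1}(x)$ is strict it suffices to prove $J_{3,0.15,x_1}(x)\,D(x) \le x$ for all $x \ge x_1$ and to dispose of the range $49 \le x < x_1$ separately.

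First I would expand $J_{3,0.15,x_1}(x)$ into powers of $1/\log x$. Repeated integration by parts in $\int_{x_1}^x \log^{-m} t\,dt = x\log^{-m}x - x_1\log^{-m}x_1 + m\int_{x_1}^x \log^{-(m+1)}t\,dt$ turns the two integrals occurring in $J_{3,0.15,x_1}$ into $\frac{x}{\log^2 x} + \frac{2x}{\log^3 x} + \frac{6x}{\log^4 x} + \frac{24.15\,x}{\log^5 x} + \frac{120.75\,x}{\log^6 x} + \frac{724.5\,x}{\log^7 x}$, plus an additive constant and a tail $c\int_{x_1}^x \log^{-8}t\,dt$. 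Adding the explicit terms $x/\log x$ and $0.15\,x/\log^4 x$ of $J_{3,0.15,x_1}$ then gives $J_{3,0.15,x_1}(x) = \frac{x}{\log x}\,P(u) + c\!\int_{x_1}^x \log^{-8}t\,dt + C'$, where $C'$ is a constant and $P(u) = 1 + u + 2u^2 + 6.15\,u^3 + 24.15\,u^4 + 120.75\,u^5 + 724.5\,u^6$. The tail I would bound explicitly (for instance by splitting the integral at $\sqrt{x}$, which yields a bound of shape $A\,x/\log^8 x$ with a negligible remainder), and the constant $C' = \pi(x_1) - \vartheta(x_1)/\log x_1$ together with the boundary terms at $x_1$ I would bound in absolute value; both contribute only at order $u^7$ and smaller relative to $x/\log x$.

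The core step is the inequality $P(u)\,Q(u) \le 1$ for $0 < u \le 1/\log x_1$, where $Q(u) = 1 - u - u^2 - 3.15\,u^3 - 12.85\,u^4 - 71.3\,u^5 - 463.2275\,u^6 - 4585\,u^7$ is read off from $D(x) = \log x\cdot Q(1/\log x)$. The key structural fact is that the genuine Panaitopol coefficients $1,3,13,71,461,3441$ satisfy the recurrence quoted in the introduction precisely so that $S(u)\,T(u) = 1$ as formal power series, where $S(u) = \sum_{j\ge 0} j!\,u^j$ is the expansion of $\mathrm{li}$ and $T(u) = 1 - u - u^2 - 3u^3 - 13u^4 - 71u^5 - 461u^6 - 3441u^7 - \cdots$. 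Writing $P = S + \delta P$ and $Q = T + \delta Q$ one has $P Q - 1 = S\,\delta Q + T\,\delta P + \delta P\,\delta Q$, and the perturbed coefficients $3.15,\,12.85,\,71.3,\,463.2275,\,4585$ are exactly the values that make this combination vanish through order $u^6$ and leave the leading surviving terms negative---the dominant one being of order $u^7$ and driven by the large gap $4585 - 3441 = 1144$ between the modified and the true $u^7$-coefficient---so that $P(u)Q(u) < 1$ on $0 < u \le 1/\log x_1$. I expect this verification to be the main obstacle: one must pin down every coefficient of $P$ (including the contributions from $\eta_3 = 0.15$ and from the tail integral) and of $Q$, and then confirm $P(u)Q(u) - 1 \le 0$ throughout the range after the tail and the constant have been absorbed into the enlarged high-order coefficients of $Q$. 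Because the first nonzero coefficient of $PQ - 1$ is large and negative, even a loose bound on the tail integral suffices here.

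Finally I would treat the range $49 \le x < x_1$ computationally. The right-hand side of \eqref{3.3} is a strictly increasing function of $x$, while $\pi(x)$ is constant between consecutive primes and increases only at primes; hence it is enough to verify \eqref{3.3} at $x = p_n$ for every prime $p_n$ in this range, using the value of the right-hand side at $p_n$, exactly as in the final step of the proof of Theorem \ref{thm204}. The positivity $D(x) > 0$ on $[49,\infty)$ is checked at the same time, being immediate for large $x$ and following from the finite computation near $x = 49$.
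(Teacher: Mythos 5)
Your setup is sound as far as the algebra goes: the upper half of \eqref{3.2} does only need the upper bound \eqref{2.10}, your expansion of $J_{3,0.15,x_1}$ is correct (the tail coefficient is $c=5040+0.15\cdot 210=5071.5$), and with your $P$ and $Q$ the product $PQ-1$ indeed vanishes through order $u^{6}$, with all remaining coefficients negative and the $u^{7}$ coefficient equal to $-6191.1775$. The genuine gap is your treatment of the tail $c\int_{x_1}^{x}\log^{-8}t\,dt$. In the inequality $J_{3,0.15,x_1}(x)\,D(x)\le x$ this tail is multiplied by $D(x)\approx\log x$, so a bound $\int_{x_1}^{x}\log^{-8}t\,dt\le\rho\,x/\log^{8}x$ contributes about $+5071.5\,\rho\,x u^{7}$, which must be absorbed by the $-6191.1775\,xu^{7}$ coming from $PQ-1$. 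You therefore need $\rho<6191.1775/5071.5\approx 1.22$ at leading order; worse, the true ratio $\int_{x_1}^{x}\log^{-8}t\,dt\big/(x\log^{-8}x)$ climbs to roughly $1.22$ near $\log x\approx 45$ before decaying to $1$, so the bound must be essentially exact there, and near $x=x_1$ one must also keep the boundary deficit $-x_1/\log^{8}x_1$, since a pure power-series bound $1+8u+Ku^{2}$ with any admissible $K$ fails for $u$ near $1/\log x_{1}\approx 0.042$. Your proposed bound (splitting at $\sqrt{x}$) gives $\rho=2^{8}=256$, off by two orders of magnitude; even one integration by parts gives $\rho=(1-8/\log x_1)^{-1}\approx 1.51$, still too large. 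So the step you dismiss with ``even a loose bound on the tail integral suffices'' is exactly where the proof lives or dies, and as written it dies: the margin $4585-3465.3\approx 1120$ you correctly identify is what remains \emph{after} an essentially exact evaluation of the tail, not a cushion against a lossy one. (The constant $C'$ causes a similar, smaller problem: at $x=x_1$ the absorbing term $1119.6775\,x/\log^{7}x$ is only about $5\cdot 10^{3}$, so you need $|C'|\lesssim 200$, and $C'$ is a near-cancellation of quantities of size $\sim 8\cdot 10^{8}$ that must actually be computed.)

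This is precisely the difficulty the paper's proof is engineered to avoid: it never expands the integral in $J$. It takes $x_1=10^{15}$, sets $g=f-J_{3,0.15,x_1}$, verifies $g(x_1)\ge 3\cdot 10^{6}$ from the known value of $\pi(10^{15})$ and a lower bound for $\vartheta(10^{15})$, and then shows $g'>0$: since $J'(x)=1/\log x+0.15/\log^{4}x-0.45/\log^{5}x$ is an elementary rational function of $\log x$ (the integral differentiates away), positivity of $g'$ reduces to a polynomial inequality in $\log x$ --- the paper's $h_{1}$, whose leading coefficient $1119.6775$ is exactly the margin computed above --- with no tail estimate and no boundary bookkeeping. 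Two further differences are worth noting. First, the paper inserts a comparison of $f$ with $\text{li}(x)$ together with Büthe's bound \eqref{3.4} to cover $1\,095\,698\le x\le 10^{15}$, so that only primes below $1\,095\,698$ need individual checking, whereas your plan requires verifying the inequality at every prime up to $19\,035\,709\,163$ (about $8.4\cdot 10^{8}$ primes) --- feasible, but heavy. Second, your claim that the right-hand side of \eqref{3.3} is strictly increasing on all of $[49,x_1)$ is false: the denominator vanishes just below $49$, so $f$ decreases from $+\infty$ there and only becomes increasing near $x\approx 100$; the paper accordingly claims monotonicity only from $101$ on and checks smaller $x$ directly. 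That last slip is cosmetic; the tail bound is not.
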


\begin{proof}
Let $x_1 = 10^{15}$,  let $f(x)$ be given by the right-hand side of \eqref{3.3}, and let $r(x)$ be the denominator of $f(x)$. By \eqref{3.2} and Theorem 
\ref{thm204}, we get $\pi(x) \leq J_{3, 0.15, x_1}(x)$ for every $x \geq x_1$. In the first step of the proof, we compare $f(x)$ with $J_{3, 0.15, x_1}(x)$. In 
order to prove that the function $g(x) = f(x) - J_{3, 0.15, x_1}(x)$ is positive for every $x \geq x_1$, we need to show that $g(x_1) > 0$ and that the 
derivative of $g$ is positive for every $x \geq x_1$. By Dusart \cite[Table 6.2]{dusart2010}, we have $\vartheta(x_1) \geq 999\,999\,965\,752\,660$. Further, 
$\pi(x_1) = 29\,844\,570\,422\,669$ and so we compute that $g(x_1) \geq 3 \cdot 10^6$.
To show that the derivative of $g$ is positive for every $x \geq x_1$, we set
\begin{align*}
h_1(y) & = 1119.6775y^{11} - 38212.4575y^{10} - 13858.278375y^9 - 45007.842875y^8 - 189106.352125y^7 \\
& \p{\q\q} - 865668.98286875y^6 - 4248412.96105y^5 - 21029165.2496875y^4 - 47509.2738384375y^3 \\
& \p{\q\q} - 246389.1037096875y^2 - 1241825.47125y
\end{align*}
and compute that $h_1(y) > 0$ for every $y \geq 34.525$. Therefore, we get that the inequality $g'(x) = (h_1(\log x) + 9460001.25)/(r^2(x)\log^{17}x) > 0$ 
holds for every $x \geq x_1$. So, $f(x) - J_{3, 0.15, x_1}(x) = g(x) > 0$ for every $x \geq x_1$ and, by \eqref{3.2}, we conclude that the inequality 
\eqref{3.3} holds for every $x \geq x_1$.

In the second step, we check \eqref{3.3} for every $x$ such that $1\,095\,698 \leq x \leq 10^{15}$ by comparing $f(x)$ with the logarithmic integral 
$\text{li}(x)$. For this, we set
\begin{align*}
h_2(y) & = 0.15y^{11} - 0.75y^{10} + 0.75y^9 - 0.195y^8 + 1118.8525y^7 - 38220.7675y^6 - 13920.74325y^5\\
& \p{\q\q} - 45874.13675y^4 - 183890.7415y^3 - 868400.71675625y^2 - 4247796.175y - 21022225.
\end{align*}
Then, it is easy to see that $h_2(y) \geq 0$ for every $y \geq 12.2714$. Hence, for every $x \geq 213\,502$, we have $f'(x_1) -  \text{li}'(x) = 
h_2(\log x)/(r^2(x) \log^{13}x) \geq 0$.
In addition, we have $f(1\,095\,698) - \text{li}(1\,095\,698) > 0$. Hence $f(x) > \text{li}(x)$ for every $x \geq 1\,095\,698$. Now we use a result of Büthe 
\cite[Theorem 2]{buethe}, namely that
\begin{equation}
\pi(x) < \text{li}(x) \tag{3.4} \label{3.4}
\end{equation}
for every $x$ such that $2 \leq x \leq 10^{19}$, to obtain that the required inequality holds for $x$ such that $1\,095\,698 \leq x \leq 10^{15}$ as well. To 
deal with the case where $101 \leq x \leq 1\,095\,698$, we notice that $f(x)$ is strictly increasing for every $x$ such that $101 \leq x \leq 1\,095\,698$. So 
we check with a computer that $f(p_n) > \pi(p_n)$ for every positive integer $n$ such that $\pi(101) \leq n \leq \pi(1\,095\,698) + 1$. A computer check for 
smaller values of $x$ completes the proof.
\end{proof}

We obtain the following weaker but more compact upper bounds.

\begin{kor} \label{kor303}
We have
\begin{displaymath}
\pi(x) < \frac{x}{\log x - 1 - \frac{a_1}{\log x} - \frac{a_2}{\log^2x} - \frac{a_3}{\log^3x} - \frac{a_4}{\log^4x} - \frac{a_5}{\log^5x}}
\end{displaymath}
for every $x \geq x_0$, where
\begin{center}
\begin{tabular}{|l||c|c|c|c|c|c|c|c|c|c|}
\hline
$a_2$\rule{0mm}{4mm} & $   1     $ & $  1    $ & $  1    $ & $1$ & $1.15$\\ \hline
$a_2$\rule{0mm}{4mm} & $   3.15$ & $  3.15$ & $  3.15$ & $3.69$ & $0$\\ \hline
$a_3$\rule{0mm}{4mm} & $  12.85$ & $12.85$ & $14.21$ & $0$ & $0$\\ \hline 
$a_4$\rule{0mm}{4mm} & $  71.3 $ & $80.43$ & $   0    $ & $0$ & $0$\\ \hline
$a_5$\rule{0mm}{4mm} & $540.59$ & $ 0     $ & $   0    $ & $0$ & $0$\\ \hline
$x_0$\rule{0mm}{4mm} & $ 32     $ & $22    $ & $ 14     $ & $10\,031\,975\,087$  & $38\,284\,442\,297$\\ \hline
\end{tabular} \ .
\end{center}
\end{kor}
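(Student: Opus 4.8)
The plan is to handle each of the five parameter sets $(a_1,\dots,a_5,x_0)$ by the scheme already used in the proof of Theorem \ref{thm302}, exploiting that every one of these bounds is obtained from \eqref{3.3} by truncating the denominator and inflating the last retained coefficient. Consequently each is \emph{weaker} than \eqref{3.3} for large $x$ but genuinely \emph{stronger} on a bounded initial range, so it cannot simply be quoted from Theorem \ref{thm302} everywhere. For a fixed set write $f(x)$ for the claimed right-hand side, $D(x)$ for its denominator, and $\tilde D(x)$ for the denominator of \eqref{3.3}. First I would record that $D(x)>0$ on the relevant range, which amounts to the positivity of a polynomial in $y=\log x$, so that $\pi(x)<x/D(x)$ is equivalent to $D(x)\,\pi(x)<x$ and the direction of every fraction comparison is preserved.

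For large $x$ I would reduce to Theorem \ref{thm302}: since $\pi(x)<x/\tilde D(x)$ for $x\geq 49$, it suffices to show $D(x)\leq\tilde D(x)$. After clearing $\log^6 x$, the difference $\tilde D(x)-D(x)$ is a polynomial in $y=\log x$ whose leading term is positive (this is exactly the effect of inflating the last coefficient), so $\tilde D\geq D$ for all $y\geq Y_0$ with an explicit threshold $Y_0$, disposing of $x\geq e^{Y_0}$ at once. The catch is that for the three tighter sets $Y_0$ is large (roughly $40$ to $60$), so $e^{Y_0}$ exceeds $10^{19}$ and the leftover range is not covered by a direct comparison with $\text{li}(x)$. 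There I would copy Step 1 of the proof of Theorem \ref{thm302}: for $x\geq x_1=10^{15}$ compare $f$ with $J_{3,0.15,x_1}$ by checking $f(x_1)\geq J_{3,0.15,x_1}(x_1)$ together with $f'(x)\geq J_{3,0.15,x_1}'(x)$, the latter again reducing to the positivity of a single polynomial in $\log x$; combined with $\pi(x)\leq J_{3,0.15,x_1}(x)$ this yields $f(x)>\pi(x)$ for all $x\geq 10^{15}$.

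It then remains to verify $f(x)>\pi(x)$ on the bounded range $x_0\leq x\leq 10^{15}$, which lies below Büthe's threshold $10^{19}$. Here I would once more follow Theorem \ref{thm302}: establish $f(x)\geq\text{li}(x)$ by reducing $f'(x)-\text{li}'(x)$ to the positivity of a polynomial in $\log x$ and checking one base-point value, and then invoke Büthe's $\pi(x)<\text{li}(x)$, valid for $2\leq x\leq 10^{19}$. On the small residual interval near $x_0$, where the $\text{li}$-comparison may fail, I would use that $f$ is strictly increasing and carry out a finite computer check of $f(p_n)>\pi(p_n)=n$ over consecutive primes $p_n$ up to the point at which the $\text{li}$-comparison takes over. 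For the loosest set the threshold $e^{Y_0}$ essentially coincides with $x_0$, so the comparison with Theorem \ref{thm302} alone already suffices; for the intermediate sets the window $[x_0,e^{Y_0}]$ sits entirely below $10^{19}$ and is handled by the $\text{li}$-step.

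The main obstacle is the bounded intermediate range, not the asymptotic comparison. For the tighter parameter sets the window $[10^{15},e^{Y_0}]$ forces the $J$-function argument, since the naive route through $\text{li}$ breaks down beyond $10^{19}$, and one must verify that $f$ dominates $J_{3,0.15,x_1}$ throughout — i.e. that the governing polynomial in $\log x$ stays positive — even though $f$ only slightly exceeds the sharper bound $x/\tilde D(x)$ there. Setting up and certifying these polynomial positivity thresholds for all five sets, confirming the monotonicity of $f$, and keeping the finite prime checks inside a computationally feasible range is where the real work lies; the denominator comparison that invokes Theorem \ref{thm302} is routine once the threshold inequality in $\log x$ is in hand.
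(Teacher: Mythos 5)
You correctly isolate the real difficulty: for the three tighter parameter sets, comparing denominators with \eqref{3.3} only settles the claim once $77.3625\log x - 4585>0$ (for the first set), i.e.\ for $x\geq e^{59.27}\approx 5.5\cdot 10^{25}$, while Büthe's bound \eqref{3.4} stops at $10^{19}$, so a bridge over $[10^{19},\,5.5\cdot 10^{25}]$ is required. But the bridge you propose --- proving $f(x)\geq J_{3,0.15,x_1}(x)$ for all $x\geq x_1=10^{15}$ --- is not merely hard to certify, it is \emph{false}. Write $\tilde f$ for the right-hand side of \eqref{3.3}, and let $D$, $\tilde D$ be the denominators of $f$ and $\tilde f$, so that $f-\tilde f = x(\tilde D-D)/(D\tilde D)$ with $\tilde D-\smash{D}=(77.3625\log x-4585)/\log^6x$. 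This is \emph{negative} for $\log x<59.27$, of size roughly $(4585-77.3625\log x)\,x/\log^8x$; at $x=e^{46}\approx 9.5\cdot 10^{19}$ it is about $-5\cdot 10^{9}$. On the other hand, $\tilde f$ and $J_{3,0.15,x_1}$ share the same asymptotic expansion through the $x/\log^7x$ term (coefficients $1,1,2,6.15,24.15,120.75,724.5$; compare Proposition \ref{prop306}), so $\tilde f-J_{3,0.15,x_1}$ is only of order $x/\log^8x$: starting from $g(x_1)\geq 3\cdot 10^6$ and integrating the derivative $g'$ from the proof of Theorem \ref{thm302}, one finds $\tilde f-J_{3,0.15,x_1}\approx 1.5\cdot 10^{9}$ at $x=e^{46}$. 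Hence $f-J_{3,0.15,x_1}\approx -3.5\cdot 10^{9}<0$ there, and in fact $f$ dips below $J_{3,0.15,x_1}$ on a window of roughly $10^{16}<x<10^{22}$ (for the second and third parameter sets the deficit is even larger, since their crossover terms enter at $x/\log^6x$ and $x/\log^5x$). On that window $f$ sits strictly between $\pi$ and $J_{3,0.15,x_1}$, so \emph{no} argument that uses $J_{3,0.15,x_1}$ as the upper bound for $\pi$ can prove the corollary; consistently with this, the derivative inequality $f'(x)\geq J_{3,0.15,x_1}'(x)$ you plan to verify already fails at $x=x_1$ itself.

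The missing ingredient is a sharper unconditional upper bound for $\pi$ on the intermediate range, and this is exactly what the paper invokes: Proposition \ref{prop301}, i.e.\ $\pi(x)<\text{li}(x)+\sqrt{x}\log x/(8\pi)$ unconditionally for $2657\leq x\leq 5.5\cdot 10^{25}$, resting on the verification of RH for the first $2\cdot 10^{13}$ zeros. The paper's proof of \eqref{3.5} sets $g(x)=f(x)-\text{li}(x)-\sqrt{x}\log x/(8\pi)$, checks $g(10^{14})>10^{6}$ and $g'(x)\geq 0$ for $x\geq 10^{14}$, and thereby covers all of $[10^{14},\,5.5\cdot 10^{25}]$; since $\sqrt{x}\log x/(8\pi)$ is vastly smaller than the slack of order $0.15x/\log^4x$ built into $J_{3,0.15,x_1}$, this comparison has room precisely where yours has none. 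Below $10^{14}$ your plan and the paper's coincide (comparison with $\text{li}$ plus \eqref{3.4}, then a finite prime-by-prime check using monotonicity of $f$), and for the fourth and fifth parameter sets --- whose crossover with \eqref{3.3} occurs near $8\cdot 10^{12}$ and $10^{11}$, well below $10^{19}$ --- your argument does go through. But for the first three sets, the step bridging $[10^{19},\,5.5\cdot 10^{25}]$ cannot be repaired along the lines you describe without Proposition \ref{prop301} or an equivalent partial-RH input.
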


\begin{proof}
We only show that the inequality
\begin{equation}
\pi(x) < \frac{x}{\log x - 1 - \frac{1}{\log x} - \frac{3.15}{\log^2x} - \frac{12.85}{\log^3x} - \frac{71.3}{\log^4x} - \frac{540.59}{\log^5x}} \tag{3.5}
\label{3.5}
\end{equation}
holds for every $x \geq 32$. The proofs of the remaining inequalities are similar to the proof of \eqref{3.5} and we leave the details to the reader. For every
$x \geq 5.5 \cdot 10^{25}$, Theorem \ref{thm302} implies the validity of \eqref{3.5}. Denoting the right-hand side of \eqref{3.5} by $f(x)$, we set $g(x) = 
f(x) - \text{li}(x) - \sqrt{x} \log x/(8\pi)$. We compute that $g(10^{14}) > 10^6$ and $g'(x) \geq 0$ for every $x \geq 10^{14}$. Hence $f(x) \geq \text{li}(x) 
+ \sqrt{x} \log x/(8 \pi)$ for every $x \geq 10^{14}$. Now we apply Proposition \ref{prop301} to get that the inequality \eqref{3.5} also holds for every $x$ 
such that $10^{14} \leq x \leq 5.5 \cdot 10^{25}$ as well. A comparsion with $\text{li}(x)$ shows that $f(x) > \text{li}(x)$ for every $x \geq 4\,560\,187$. 
From \eqref{3.4} follows that the inequality \eqref{3.5} also holds for every $x$ such that $4\,560\,187 \leq x \leq 10^{14}$. To verify that $f(x) > \pi(x)$ 
holds for every $x$ such that $67 \leq x \leq 4\,560\,187$, it suffices to 
check that $f(p_n) > \pi(p_n)$ for every positive integer $n$ such that $\pi(67) \leq n \leq \pi(4\,560\,187) + 1$, since $f(x)$ is strictly increasing for 
every $x \geq 67$. We conclude by direct computation. 
\end{proof}

In \cite[Theorem 1.3]{axler2016}, the present author purports that the inequality 
\begin{equation}
\pi(x) < \frac{x}{\log x - 1 - \frac{1}{\log x} - \frac{3.35}{\log^2x} - \frac{12.65}{\log^3x} - \frac{71.7}{\log^4x} - \frac{466.1275}{\log^5x} - 
\frac{3489.8225}{\log^6x}} \tag{3.6} \label{3.6}
\end{equation}
holds for every $x \geq e^{3.804}$. But the proof of this inequality in its present form is not correct. There is a mistake in the first part of the proof, 
where it is claimed that the inequality \eqref{3.6} holds for every $x \geq 10^{14}$.
Fortunaly, this incorrectness will be fixed by Theorem \ref{thm302}.  

\begin{kor} \label{kor304}
For every $x \geq e^{3.804}$, we have
\begin{displaymath}
\pi(x) < \frac{x}{\log x - 1 - \frac{1}{\log x} - \frac{3.35}{\log^2x} - \frac{12.65}{\log^3x} - \frac{71.7}{\log^4x} - \frac{466.1275}{\log^5x} - 
\frac{3489.8225}{\log^6x}}. 
\end{displaymath}
\end{kor}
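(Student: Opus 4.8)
The plan is to deduce the corollary from Theorem \ref{thm302} on the main part of the range and to settle a short initial interval by direct computation. Write $f(x)$ for the right-hand side of the claimed inequality and let $r(x)$ be its denominator; let $r_0(x)$ denote the denominator occurring in \eqref{3.3}. Since Theorem \ref{thm302} already supplies $\pi(x) < x/r_0(x)$ for every $x \geq 49$, it will suffice, wherever both denominators are positive, to establish that $r(x) \leq r_0(x)$: this gives $f(x) = x/r(x) \geq x/r_0(x) > \pi(x)$.

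First I would carry out the denominator comparison. Collecting the two expressions over a common denominator, the inequality $r_0(x) - r(x) \geq 0$ is equivalent, after multiplication by $\log^6 x$, to the polynomial inequality
\begin{displaymath}
0.2\,y^4 - 0.2\,y^3 + 0.4\,y^2 + 2.9\,y - 1095.1775 \geq 0, \qquad y = \log x.
\end{displaymath}
Its leading coefficient is positive, so it holds for every $y$ beyond its largest real root, which a short numerical check places just below $8.76$. Consequently Theorem \ref{thm302} already yields the corollary for every $x \geq e^{8.76}$ (here $r_0(x) > 0$ because the bound of Theorem \ref{thm302} is valid, and $r(x) > 0$ as discussed below).

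Next I would handle the bounded interval $e^{3.804} \leq x \leq e^{8.76}$. A preliminary point is the positivity of $r$: writing $y = \log x$ and clearing denominators, $y^6 r(x) = y^7 - y^6 - y^5 - 3.35 y^4 - 12.65 y^3 - 71.7 y^2 - 466.1275 y - 3489.8225$ exhibits a single sign change, hence has a unique positive root, which lies just below $3.804$; thus $r(x) > 0$ and $f(x)$ is a genuine positive upper bound for all $x \geq e^{3.804}$. On the interval itself I would proceed as in the proof of Theorem \ref{thm302}: locate the point $X_1$ at which $f$ changes from decreasing to increasing, check the single inequality $f(X_1) > \pi(X_1)$, which --- since $f$ is decreasing and $\pi$ nondecreasing on $[e^{3.804}, X_1]$ --- covers the subinterval $e^{3.804} \leq x \leq X_1$, and then verify $f(p_n) > \pi(p_n)$ at consecutive primes $p_n$ with $X_1 \leq p_n \leq e^{8.76}$, using that $f$ is strictly increasing there.

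The degree-four comparison and the finite prime-by-prime check are routine. The only real subtlety is the behaviour of $f$ near $x = e^{3.804}$: because $r$ nearly vanishes there, $f$ is not monotone on the whole initial interval and instead blows up as $x$ decreases to $e^{3.804}$, so the inequality becomes trivial at the left end. This forced blow-up is precisely what determines the admissible lower bound $e^{3.804}$, and it is the one place where the argument departs from a straightforward ``increasing $f$, check at the primes'' routine.
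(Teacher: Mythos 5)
Your proposal is correct, and on the bounded range it takes a genuinely different route from the paper. The paper's own proof consists of two citations: it observes that the proof of this inequality in \cite[Theorem 1.3]{axler2016}, whose only defect was in the range $x \geq 10^{14}$, is still valid for $e^{3.804} \leq x \leq 10^{14}$, and that for $x \geq 10^{14}$ the claim follows from Theorem \ref{thm302}, where the coefficient comparison is immediate because $\log x > 32$. You instead make that comparison quantitative: your quartic $0.2y^4 - 0.2y^3 + 0.4y^2 + 2.9y - 1095.1775$ is indeed $\log^6 x$ times the difference of the two denominators, it is strictly increasing for $y > 0$ (its derivative $0.8y^3 - 0.6y^2 + 0.8y + 2.9$ is positive there, since $0.8y^3 + 0.8y \geq 1.6y^2 > 0.6y^2$), and it changes sign between $8.75$ and $8.76$; hence Theorem \ref{thm302} alone yields the corollary for all $x \geq e^{8.76} \approx 6.4 \cdot 10^3$, leaving only a finite verification over the primes below about $6\,400$. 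Your treatment of the left endpoint is also sound: by Descartes' rule of signs, $\log^6 x$ times the denominator has a unique positive zero, which lies between $3.8$ and $3.804$ (the degree-seven polynomial is negative at $y=3.8$ and positive at $y=3.804$), so the denominator is positive on $[e^{3.804}, \infty)$ and $f$ blows up at the left end; splitting at the unique minimum $X_1$ of $f$ and checking $f(X_1) > \pi(X_1)$ together with $f(p_n) > \pi(p_n)$ at the remaining primes is the same verification pattern the paper uses inside the proof of Theorem \ref{thm302}. As for what each approach buys: the paper's proof is shorter but not self-contained, leaning on the earlier paper's treatment of the huge interval $[e^{3.804}, 10^{14}]$; yours removes that external dependence, shrinks the needed computation by roughly ten orders of magnitude, and explains why $e^{3.804}$ is the natural threshold. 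In a final write-up you should record the two routine facts you only assert: the evaluation showing positivity of the denominator polynomial at $y = 3.804$, and the unimodality of $f$ (the numerator of $f'$, cleared of denominators, has a single sign change in its coefficients, so Descartes' rule gives exactly one critical point).
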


\begin{proof}
The proof in \cite{axler2016} that the inequality \eqref{3.6} holds for every $x$ such that $e^{3.804} \leq x \leq 10^{14}$ is still correct and it suffices to
consider the remaining case $x \geq 10^{14}$. In this case the required inequality follows directly from Theorem \ref{thm302}.
\end{proof}

Using Proposition \ref{prop205}, we get the following upper bound for the prime counting function, which improve the inequality \eqref{3.3} for every 
sufficiently large values of $x$.

\begin{prop} \label{prop305}
For every $x \geq 41$, we have
\begin{displaymath}
\pi(x) < \frac{x}{\log x - 1 - \frac{1}{\log x} - \frac{3}{\log^2x} - \frac{113}{\log^3 x}}. \tag{3.7} \label{3.7}
\end{displaymath}
\end{prop}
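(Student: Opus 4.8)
The plan is to follow the three-range strategy of the proof of Theorem \ref{thm302}, now feeding the bound $|\vartheta(x)-x| < 100x/\log^4 x$ of Proposition \ref{prop205} into the inequality \eqref{3.2} with $k=4$, $\eta_4 = 100$ and $x_1(4) = 70\,111$. Write $f(x)$ for the right-hand side of \eqref{3.7} and $r(x) = \log x - 1 - 1/\log x - 3/\log^2 x - 113/\log^3 x$ for its denominator. First I would record that $r(x) > 0$ precisely for $x$ beyond the unique zero of $r$, which lies just below $41$; for $x$ below that zero the right-hand side of \eqref{3.7} is negative, so the threshold $x = 41$ is essentially forced and nothing is to prove below it. The three ranges are then: all large $x$ via the function $J_{4,100,x_1}$, an intermediate range via comparison with $\text{li}(x)$ and Büthe's inequality \eqref{3.4}, and a finite range handled by direct computation.

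For the first range I would fix $x_1 = 10^{15}$ (so $x_1 \geq x_1(4)$ and the tabulated values $\pi(10^{15}) = 29\,844\,570\,422\,669$ and $\vartheta(10^{15}) \geq 999\,999\,965\,752\,660$ are available), so that \eqref{3.2} gives $\pi(x) \leq J_{4,100,x_1}(x)$ for every $x \geq x_1$. Setting $g(x) = f(x) - J_{4,100,x_1}(x)$, it suffices to verify $g(x_1) > 0$ and $g'(x) > 0$ for $x \geq x_1$. The value $g(x_1)$ is computed directly from the two tabulated constants. For the derivative I would use that the derivative of the integral term of $J$ is its integrand, which gives the clean expression $J'(x) = 1/\log x + 100/\log^5 x - 400/\log^6 x$; combining this with $f'(x) = (r(x) - x r'(x))/r^2(x)$ and clearing denominators reduces $g'(x) > 0$ to the positivity of an explicit polynomial $h_1(\log x)$. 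Its leading several coefficients cancel, because $f$ and $J$ share the same asymptotic expansion to high order, leaving a polynomial of small degree with positive leading coefficient, which one checks is positive for $\log x \geq \log 10^{15} = 34.53\ldots$.

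For the intermediate range I would compare $f$ with $\text{li}$. Since $\text{li}'(x) = 1/\log x$, the same denominator-clearing reduces $f'(x) - \text{li}'(x) \geq 0$ to the positivity of an explicit quartic $h_2(y) = 100y^4 - 571y^3 - \ldots$ in $y = \log x$, which holds for all $y$ beyond a threshold close to $6.6$. Checking that $f$ exceeds $\text{li}$ at one base point above this threshold then yields $f(x) > \text{li}(x)$ for all $x$ from that point onward, and Büthe's inequality \eqref{3.4}, valid for $2 \le x \le 10^{19}$, upgrades this to $\pi(x) < f(x)$ throughout the interval up to $10^{15}$, meeting the first range.

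The remaining finite range, down to $x = 41$, I would dispatch by computer, and here lies the only genuinely delicate point. The function $f$ is not monotonic near $41$: since $r(x)$ is close to its zero, the sign of $f'$ equals that of $\log^5 x - 2\log^4 x - \cdots$, which is negative on roughly $(41,95)$, so $f$ decreases there before the usual increasing behaviour sets in. Consequently the convenient shortcut of checking $f(p_n) > \pi(p_n)$ at consecutive primes — valid where $f$ increases, hence for $x$ beyond about $95$ — must be replaced on the short decreasing stretch by checking the inequality at the right endpoints of the inter-prime intervals, or simply by brute force over the finitely many integers involved. Once the numeric thresholds of the three ranges are confirmed to overlap, the proof is complete; apart from this non-monotonicity near the threshold, the remaining work is the routine but error-prone bookkeeping of $h_1$ and $h_2$ and the verification of the finitely many base-point and prime inequalities.
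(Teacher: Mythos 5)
Your proposal is correct and takes essentially the same route as the paper: the paper's own (sketched) proof likewise fixes $x_1=10^{15}$, shows $f(x)>J_{4,100,x_1}(x)$ and invokes \eqref{3.2} with Proposition \ref{prop205} for $x\geq 10^{15}$, then compares $f$ with $\text{li}(x)$ (via \eqref{3.4}) down to about $e^{7}$, and finishes by direct computation. Your added details — the formula $J'(x)=1/\log x+100/\log^5 x-400/\log^6 x$, the quartic $100y^4-571y^3-\cdots$ from the $\text{li}$ comparison, and the observation that $f$ decreases on roughly $(41,95)$ so the check-at-primes shortcut must be adjusted there — are accurate fill-ins of steps the paper leaves to the reader.
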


\begin{proof}
The proof is similar to the proof of Theorem \ref{thm302} and we leave the details to the reader. We denote the right-hand side of \eqref{3.7} by $f(x)$ and 
let $x_1= 10^{15}$. Comparing $f(x)$ with $J_{4, 100, x_1}(x)$, we get, by using 
$f(x) > J_{4, 100, x_1}(x)$ holds for every $x \geq 10^{15}$. Then, by 
\eqref{3.2} and Proposition \ref{prop205}, that
$f(x) > \pi(x)$ for every $x \geq 10^{15}$. Next, we compare $f(x)$ with $\text{li}(x)$ and obtain that the desired inequality holds for every $x$ such that 
$e^7 \leq x \leq 10^{15}$ as well. A direct computation for smaller values of $x$ completes the proof.
\end{proof}

Integration of parts in \eqref{1.3} implies that for every positive integer $m$, we have
\begin{equation}
\pi(x) = \frac{x}{\log x} + \frac{x}{\log^2 x} + \frac{2x}{\log^3 x} + \frac{6x}{\log^4 x} + \frac{24x}{\log^5 x} + \ldots + \frac{(m-1)! x}{\log^mx} + O \left( 
\frac{x}{\log^{m+1} x} \right). \tag{3.8} \label{3.8}
\end{equation}
In this direction, we get the following upper bound for the prime counting function.

\begin{prop} \label{prop306}
For every $x > 1$, we have
\begin{equation}
\pi(x) < \frac{x}{\log x} + \frac{x}{\log^2 x} + \frac{2x}{\log^3 x} + \frac{6.15x}{\log^4 x} + \frac{24.15x}{\log^5 x} + \frac{120.75x}{\log^6 x} + 
\frac{724.5x}{\log^7 x} + \frac{6601x}{\log^8 x}. \tag{3.9} \label{3.9}
\end{equation} 
\end{prop}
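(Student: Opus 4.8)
The plan is to write $f(x)$ for the right-hand side of \eqref{3.9} and to establish $f(x) > \pi(x)$ separately on a large range $x \ge 10^{15}$, an intermediate range, and a small range, following the architecture of the proof of Theorem \ref{thm302}. Since each summand of $f(x)$ is positive for $x > 1$, we have $f(x) > 0$ for every $x > 1$; in particular $f(x) > 0 = \pi(x)$ on the interval $(1,2)$, so the only real content is the balance between $f$ and $\pi$ as both grow like $x/\log x$.

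For the large range I set $x_1 = 10^{15}$, so that \eqref{3.2} together with Theorem \ref{thm204} gives $\pi(x) \le J_{3,0.15,x_1}(x)$ for every $x \ge x_1$. Differentiating the explicit terms of $J_{3,0.15,x_1}$ and adding its integrand collapses to $J_{3,0.15,x_1}'(x) = 1/\log x + 0.15/\log^4 x - 0.45/\log^5 x$, while a direct differentiation gives $f'(x) = 1/\log x + 0.15/\log^4 x - 0.45/\log^5 x + 1529.5/\log^8 x - 52808/\log^9 x$. Hence $f'(x) - J_{3,0.15,x_1}'(x) = (1529.5\log x - 52808)/\log^9 x$, which is positive precisely when $\log x > 52808/1529.5 = 34.52\ldots$, and so for all $x \ge x_1$ (as $\log 10^{15} = 34.53\ldots$). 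Combined with a numerical check that $f(x_1) > J_{3,0.15,x_1}(x_1)$, using $\pi(x_1) = 29\,844\,570\,422\,669$ and $\vartheta(x_1) \ge 999\,999\,965\,752\,660$ from Dusart's tables, this yields $f(x) > J_{3,0.15,x_1}(x) \ge \pi(x)$ for every $x \ge x_1$.

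For the intermediate range I compare $f$ with $\text{li}$. One finds $f'(x) - \text{li}'(x) = (0.15\log^5 x - 0.45\log^4 x + 1529.5\log x - 52808)/\log^9 x$, whose numerator is strictly increasing in $\log x$ with a single positive root near $\log x = 12.4$; thus $f - \text{li}$ is increasing for $x$ beyond this bound, and by locating the crossover $x^*$ where $f$ overtakes $\text{li}$ one obtains $f(x) > \text{li}(x)$ for $x^* \le x \le x_1$. As this interval lies inside $[2, 10^{19}]$, Büthe's inequality \eqref{3.4} upgrades this to $f(x) > \pi(x)$ on $[x^*, x_1]$.

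For the small range I use that $f'(x) > 0$ exactly when $\log^8 x + 0.15\log^5 x - 0.45\log^4 x + 1529.5\log x - 52808 > 0$, which holds for $x$ above a small explicit bound $x_0$ (with $\log x_0 \approx 3.85$, so $x_0 \approx 47$); below $x_0$ the function $f$ is strictly decreasing, so $f(x) > f(x_0) > 15 \ge \pi(x)$ on $(1, x_0)$, while on $[x_0, x^*]$ the monotonicity of $f$ reduces the claim to the finite verification $f(p_n) > \pi(p_n)$ over the primes $p_n$ there. The main obstacle is that the coefficients in \eqref{3.9} are calibrated so tightly that both derivative comparisons only become favourable at the thresholds $\log x \approx 34.5$ and $\log x \approx 12.4$; the real work is therefore in pinning down the crossover $x^*$ precisely and in carrying out the genuinely large prime check on $[x_0, x^*]$.
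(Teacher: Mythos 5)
Your proposal is correct and takes essentially the same route as the paper's own proof: bound $\pi(x)$ by $J_{3,0.15,10^{15}}(x)$ via \eqref{3.2} and Theorem \ref{thm204} for $x \geq 10^{15}$, compare $f$ with $\operatorname{li}$ and invoke Büthe's inequality \eqref{3.4} on the intermediate range, then use the monotonicity of $f$ (increasing for $x \gtrsim 47$) together with a finite check at primes below the crossover (the paper locates it at $x^* = 1\,509\,412$). Your derivative computations (the telescoping of $f'$, the threshold $52808/1529.5 \approx 34.53 < \log 10^{15}$, and the sign analysis of the numerators) check out; you have merely made explicit the comparisons the paper leaves to the reader.
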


\begin{proof}
We set $x_1 = 10^{15}$. Further, let $f(x)$ be the right-hand side of \eqref{3.9}. A comparsion with $J_{3, 0.15, x_1}(x)$ shows that $f(x) > J_{3, 0.15, 
x_1}(x)$ for every $x \geq 10^{15}$. By \eqref{3.2} and Theorem \ref{thm204}, we get that $f(x) > \pi(x)$ for every $x \geq x_1$. Next, we compare $f(x)$ with 
$\text{li}(x)$ and get that $f(x) > \text{li}(x)$ for every $x \geq 1\,509\,412$. Together with \eqref{3.4}, we obtain that $f(x) > \pi(x)$ for every $x$ such 
that $1\,509\,412 \leq x \leq 10^{15}$ as well. It remains to deal with the case where $1 < x \leq 1\,509\,412$. Since $f(x)$ is a strictly increasing function 
for every $x \geq 47$, it suffices to check that $f(p_n) > \pi(p_n)$ for every positive integer $n$ such that $\pi(47) \leq n \leq \pi(1\,509\,412) + 1$. For 
smaller values of $x$, we conclude by direct computation.
\end{proof}

\begin{rema}
Using Proposition \ref{prop205}, instead of Theorem \ref{thm204}, in the proof of Proposition \ref{prop306}, we get similarly that the inequality
\begin{displaymath}
\pi(x) < \frac{x}{\log x} + \frac{x}{\log^2 x} + \frac{2x}{\log^3 x} + \frac{6x}{\log^4 x} + \frac{133x}{\log^5 x}
\end{displaymath}
holds for every $x > 1$.
\end{rema}


We get the following weaker but more compact upper bound for the prime counting function.

\begin{kor} \label{kor307}
For every $x \geq 27\,777\,762\,891$, we have
\begin{displaymath}
\pi(x) < \frac{x}{\log x} + \frac{x}{\log^2 x} + \frac{2.3x}{\log^3 x}.
\end{displaymath} 
\end{kor}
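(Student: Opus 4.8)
The plan is to read off the compact bound from the sharper estimate of Proposition \ref{prop306}. Let $f(x)$ denote the right-hand side of the asserted inequality, and let $F(x)$ be the right-hand side of \eqref{3.9}, so that $\pi(x) < F(x)$ for every $x > 1$. Since $f$ and $F$ share the leading block $\frac{x}{\log x} + \frac{x}{\log^2 x}$, the inequality $F(x) \le f(x)$ is, after multiplying through by $\log^3 x / x > 0$, equivalent to
\[
\frac{6.15}{\log x} + \frac{24.15}{\log^2 x} + \frac{120.75}{\log^3 x} + \frac{724.5}{\log^4 x} + \frac{6601}{\log^5 x} \le 0.3 .
\]
The left-hand side is decreasing in $x$, so it is enough to verify this at one explicit threshold (numerically near $\log x \approx 24.7$); for every $x$ beyond that threshold we obtain $\pi(x) < F(x) \le f(x)$. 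In particular this disposes of all $x > 10^{19}$, where $\log x > 43$.

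For the range below $10^{19}$ I would compare $f$ with the logarithmic integral, in the same spirit as the proof of Theorem \ref{thm302}. The aim is to show $f(x) > \text{li}(x)$ for all $x \ge X_2$, where $X_2$ is the point just above which the head term $0.3/\log^2 x$ of $f(x) - \text{li}(x)$ dominates the tail $6/\log^3 x + 24/\log^4 x + \cdots$; numerically $X_2$ is around $3.4 \cdot 10^{10}$. Concretely I would check the single boundary value $f(X_2) > \text{li}(X_2)$ and then prove $f'(x) - \text{li}'(x) \ge 0$ for $x \ge X_2$, the latter reducing to the nonnegativity of an explicit polynomial in $\log x$ over a positive denominator. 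Combined with Büthe's inequality \eqref{3.4}, i.e. $\pi(x) < \text{li}(x)$ for $2 \le x \le 10^{19}$, this gives $\pi(x) < \text{li}(x) < f(x)$ for every $x$ with $X_2 \le x \le 10^{19}$.

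The remaining window $27\,777\,762\,891 \le x < X_2$ is where I expect the real work to lie. Here one has $f(x) < \text{li}(x)$, so the comparison with $\text{li}$ is of no use and the bound is genuinely close to $\pi(x)$; indeed $27\,777\,762\,891$ is essentially the value at which $f$ first overtakes $\pi$. Since $f$ is strictly increasing for $\log x > 3$ and $\pi$ is constant on each interval $[p_n, p_{n+1})$, it suffices to verify on a computer that $f(p_n) > \pi(p_n)$ for every positive integer $n$ with $\pi(27\,777\,762\,891) \le n \le \pi(X_2) + 1$. This is the main obstacle: it calls for the exact values of $\pi$ over a range containing on the order of $10^8$ primes, with small margins near the lower endpoint. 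It is nonetheless a routine finite computation of the same type already performed in the proofs of Theorem \ref{thm204} and Theorem \ref{thm302}, and completes the proof.
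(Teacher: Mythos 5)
Your proposal is correct and follows essentially the same route as the paper's proof: reduce to Proposition \ref{prop306} for large $x$, compare $f$ with $\text{li}(x)$ and invoke Büthe's inequality \eqref{3.4} in the middle range (the paper's crossover point is $33\,272\,003\,003$, matching your estimate of $X_2$), and finish the remaining window above $27\,777\,762\,891$ by a finite computer check combined with the monotonicity of $f$.
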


\begin{proof}
From Proposition \ref{prop306} follows that the required inequality holds for every $x \geq 5.1 \cdot 10^{10}$. Denoting the right-hand side of the desired
inequality by $f(x)$, we get that $f(x) > \text{li}(x)$ for every $x \geq 33\,272\,003\,003$. Together with \eqref{3.4}, we conclude the proof for every $x 
\geq 33\,272\,003\,003$. For every positive integer $n$ such that $\pi(27\,777\,762\,917) \leq n \leq \pi(33\,272\,003\,003)$, we check that $f(p_n) \geq 
\pi(p_n)$. Since $f$ is an increasing function for every $x \geq 7$, we get that $f(x) > \pi(x)$ for every $x$ such that $ 27\,777\,762\,917 \leq x < 
33\,272\,003\,003$. A direct computer check for small values of $x$ completes the proof.
\end{proof}




\subsection{New lower bounds for the prime counting function}

In this subsection, we give new lower bounds for the prime counting function, which improve the currently best known lower bound given in \cite[Theorem
1.4]{axler2016}, namely
\begin{displaymath}
\pi(x) > \frac{x}{\log x - 1 - \frac{1}{\log x} - \frac{2.65}{\log^2x} - \frac{13.35}{\log^3x} - \frac{70.3}{\log^4x} - \frac{455.6275}{\log^5x} - 
\frac{3404.4225}{\log^6x}}
\end{displaymath}
for every $x \geq 1\,332\,479\,531$.

\begin{thm} \label{thm308}
For every $x \geq 19\,033\,744\,403$, we have
\begin{equation}
\pi(x) > \frac{x}{\log x - 1 - \frac{1}{\log x} - \frac{2.85}{\log^2x} - \frac{13.15}{\log^3x} - \frac{70.7}{\log^4x} - \frac{458.7275}{\log^5x} - 
\frac{3428.7225}{\log^6x}}. \tag{3.10} \label{3.10}
\end{equation}
\end{thm}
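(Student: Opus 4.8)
The plan is to mirror the proof of Theorem \ref{thm302}, but with the roles of the bounds reversed: here we need a lower estimate, so Büthe's inequality $\pi(x) < \text{li}(x)$ that was used there is of no help, and Proposition \ref{prop301} must take its place. Write $f(x)$ for the right-hand side of \eqref{3.10} and $r(x)$ for its denominator, and set $x_0 = 19\,035\,709\,163 = p_{841\,508\,302}$, which is exactly the threshold of the lower estimate \eqref{2.9}. By \eqref{3.2} together with Theorem \ref{thm204} (with $k = 3$ and $\eta_3 = 0.15$) we have $\pi(x) \geq J_{3,-0.15,x_0}(x)$ for every $x \geq x_0$, so it suffices to prove $f(x) \leq J_{3,-0.15,x_0}(x)$ on this range; the narrow remaining window $19\,033\,744\,403 \leq x < x_0$ will be cleared separately.

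For the main range $x \geq x_0$ I would set $g(x) = J_{3,-0.15,x_0}(x) - f(x)$ and proceed as usual: verify the endpoint inequality $g(x_0) > 0$ from the exact value $\pi(x_0) = 841\,508\,302$ together with a computed lower bound for $\vartheta(x_0)$, and then show $g'(x) \geq 0$ for all $x \geq x_0$. Differentiating the $J$-function gives $J_{3,-0.15,x_0}'(x) = 1/\log x - 0.15/\log^4 x + 0.45/\log^5 x$, while $f'(x) = (r(x) - x r'(x))/r(x)^2$; hence $g'(x) \geq 0$ is equivalent, after clearing denominators, to the nonnegativity of an explicit polynomial $h(y)$ in $y = \log x$ for $y \geq \log x_0$. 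Confirming $h(y) \geq 0$ on this half-line, exactly as $h_1$ was treated in the proof of Theorem \ref{thm302}, yields $g(x) \geq g(x_0) > 0$ and therefore $f(x) < \pi(x)$ for every $x \geq x_0$. (One may instead use base point $10^{15}$ and the tabulated value $\vartheta(10^{15})$ for $x \geq 10^{15}$, but a $J$-comparison anchored near $x_0$ is still required to reach the lower part of the range, since the alternative of comparing $f$ with $\text{li}(x) - \sqrt{x}\log x/(8\pi)$ from Proposition \ref{prop301} becomes ineffective below roughly $10^{14}$: there $\sqrt{x}\log x/(8\pi)$ already exceeds the gap $\text{li}(x) - f(x) \approx 0.15x/\log^4 x$.)

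For the leftover interval $19\,033\,744\,403 \leq x < x_0$ I would argue by direct computation. Since $f$ is strictly increasing there, for $p_n \leq x < p_{n+1}$ one has $f(x) < f(p_{n+1})$ and $\pi(x) = n$, so it is enough to check with a computer that $f(p_{n+1}) \leq \pi(p_n) = n$ for every $n$ with $\pi(19\,033\,744\,403) \leq n \leq \pi(x_0)$; this is a finite and quite modest list of primes, and completing it establishes $f(x) < \pi(x)$ on the whole window.

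The main obstacle I anticipate is the near-cancellation that governs everything. Both $f(x)$ and $J_{3,-0.15,x_0}(x)$ approximate $\pi(x)$ with the same leading correction of size $0.15x/\log^4 x$, so $g$ is genuinely a lower-order quantity and its sign is decided only by the finely tuned constants $2.85$, $13.15$, $70.7$, $458.7275$, $3428.7225$. Consequently both the endpoint estimate $g(x_0) > 0$ and, more delicately, the polynomial inequality $h(y) \geq 0$ down to the comparatively small value $y = \log x_0 \approx 23.67$ are tight; it is precisely this tightness that fixes the admissible starting point at $19\,033\,744\,403$ rather than at a smaller number.
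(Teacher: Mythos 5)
Your proposal follows essentially the same route as the paper's proof. The paper also works with the function $J_{3,-0.15,x_1}$, proves positivity of the difference $g = J_{3,-0.15,x_1} - f$ by checking one endpoint value and showing $g' > 0$ via an explicit polynomial inequality in $y = \log x$, and clears the window $19\,033\,744\,403 \leq x < 19\,035\,709\,163$ by exactly the computer check you describe (verifying $\pi(p_n) > f(p_{n+1})$ over that range of $n$, using that $f$ is increasing). The only structural difference is the anchor point: the paper takes $x_1 = 5 \cdot 10^9$, where Dusart's Table 6.2 supplies the bound $\vartheta(x_1) \leq 4\,999\,906\,576$ and $\pi(x_1) = 234\,954\,223$ is known, and computes $g(x_1) > 18.955$. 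Your anchor $x_0 = p_{841\,508\,302}$ is arguably the logically cleaner choice, since it is precisely the point from which the hypothesis of \eqref{3.2} with $\eta_3 = 0.15$ is guaranteed by Theorem \ref{thm204}, whereas the paper's anchor lies below that threshold; on the other hand, it forces a fresh computation of $\vartheta(x_0)$ (not available in published tables), and whether the endpoint margin is actually positive at $x_0$ is a numerical fact you have not verified --- these margins are razor-thin, as the paper's value $18.955$ shows.

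One concrete error needs fixing: you propose to verify $g(x_0) > 0$ from the exact value $\pi(x_0) = 841\,508\,302$ ``together with a computed lower bound for $\vartheta(x_0)$''. That is the wrong direction. Since
\begin{displaymath}
J_{3,-0.15,x_0}(x_0) = \pi(x_0) - \frac{\vartheta(x_0)}{\log x_0} + \frac{x_0}{\log x_0} - \frac{0.15\,x_0}{\log^4 x_0},
\end{displaymath}
the quantity $\vartheta(x_0)$ enters with a negative sign, so a lower bound on $\vartheta(x_0)$ gives no lower bound on $g(x_0)$; you need an \emph{upper} bound for $\vartheta(x_0)$, for instance B\"uthe's unconditional $\vartheta(x) < x$ for $x \leq 10^{19}$ (cited in the proof of Theorem \ref{thm204}) or the inequality \eqref{2.10}. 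This is exactly how the paper uses its table value: $\vartheta(x_1) \leq 4\,999\,906\,576$ is an upper bound. With this corrected (and assuming the tight endpoint computation at your anchor indeed comes out positive), the rest of your argument is sound; in particular the derivative step is unaffected by the change of anchor, because $J_{3,-0.15,x_0}' = J_{3,-0.15,x_1}'$ (the anchor only shifts $J$ by a constant), so positivity of the paper's polynomial for $y \geq \log(5 \cdot 10^9)$ already covers your range $y \geq \log x_0$.
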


\begin{proof}
Let $x_1 = 5 \cdot 10^9$. Further, let $f(x)$ be the right-hand side of \eqref{3.10} and let $r(x)$ be the denominator of $f(x)$. To prove that the function 
$g(x) = J_{3, -0.15, x_1}(x) - f(x)$ is positive for every $x \geq x_1$, we need to show that $g(x_1) > 0$ and that the derivative of $g$ is positive for every 
$x \geq x_1$. By Dusart \cite[Table 6.2]{dusart2010}, we have $\vartheta(x_1) \leq 4\,999\,906\,576$. Combined with $\pi(x_1) = 234\,954\,223$, we compute that 
$g(x_1) > 18.955$.
To show that the derivative of $g$ is positive for every $x \geq x_1$, we set 
\begin{align*}
h(y) & = 28\,930y^{10} + 11\,393y^9 + 37\,131y^8 + 151\,211y^7 + 697\,310y^6 + 3\,145\,306y^5 + 11\,749\,355y^4\\
& \p{\q\q} - 34\,521y^3 - 158\,992y^2 - 347\,857y + 5\,290\,262.
\end{align*}
Clearly, we have $h(y) > 0$ for every $y \geq \log(x_1)$. Hence, $g'(x) r^2(x) \log^{19}x \geq h_1(\log x) \geq 0$ for every $x \geq x_1$. So, $J_{3, -0.15, 
x_1}(x) - f(x) = g(x) > 0$ for every $x \geq x_1$. Using \eqref{3.2} and Theorem \ref{thm204}, we get that required inequality for every $x \geq 
19\,035\,709\,163$. To deal with the remaining case where $19\,033\,744\,403 \leq x \leq 19\,035\,709\,163$, we note that $f(x)$ is increasing for every $x \geq 
91$. So we check with a computer that $\pi(p_n) > f(p_{n+1})$ for every positive integer $n$ such that $\pi(19\,033\,744\,403) \leq n \leq 
\pi(19\,035\,709\,163)$.
\end{proof}

In the next corollary, we establish some weaker lower bounds for the prime counting function.

\begin{kor} \label{kor309}
We have
\begin{displaymath}
\pi(x) > \frac{x}{\log x - 1 - \frac{a_1}{\log x} - \frac{a_2}{\log^2x} - \frac{a_3}{\log^3x} - \frac{a_4}{\log^4x} - \frac{a_5}{\log^5x}}
\end{displaymath}
for every $x \geq x_0$, where
\begin{center}
\begin{tabular}{|l||c|c|c|c|c|c|c|c|c|c|}
\hline
$a_1$\rule{0mm}{4mm} & $                 1$ & $               1$ & $                 1$ & $1$ & $1$\\ \hline
$a_2$\rule{0mm}{4mm} & $             2.85$ & $          2.85$ & $            2.85$ & $2.85$ & $0$\\ \hline
$a_3$\rule{0mm}{4mm} & $           13.15$ & $         13.15$ & $          13.15$ & $0$ & $0$\\ \hline 
$a_4$\rule{0mm}{4mm} & $             70.7$ & $           70.7$ & $                0$ & $0$ & $0$\\ \hline
$a_5$\rule{0mm}{4mm} & $      458.7275$ & $               0$ & $                0$ & $0$ & $0$\\ \hline
$x_0$\rule{0mm}{4mm} & $11\,532\,441\,449$ & $7\,822\,207\,951$ & $1\,331\,532\,233$ & $38\,099\,531$  & $468\,049$\\ \hline
\end{tabular} \ .
\end{center}
\end{kor}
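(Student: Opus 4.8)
The plan is to proceed exactly as in the proof of Corollary \ref{kor303}, of which the present statement is the lower-bound analogue. As there, I would establish only one of the five inequalities in full---say the first column, with $a_1 = 1$, $a_2 = 2.85$, $a_3 = 13.15$, $a_4 = 70.7$, $a_5 = 458.7275$ and $x_0 = 11\,532\,441\,449$---and leave the remaining four to the reader, since each is obtained by the same scheme with different numerical thresholds. Write $f(x)$ for the right-hand side in question. The first observation is that every inequality in the table arises from the bound in Theorem \ref{thm308} by deleting one or more of the nonnegative tail terms $\frac{3428.7225}{\log^6 x}, \frac{458.7275}{\log^5 x}, \ldots$ from the denominator; since that denominator is positive for the ranges involved, deleting a nonnegative term enlarges it and hence shrinks $f$, so $f(x)$ never exceeds the six-term function of Theorem \ref{thm308}. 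Consequently $\pi(x) > f(x)$ holds for every $x \geq 19\,033\,744\,403$ at once, directly from Theorem \ref{thm308}.

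It then remains to push the inequality down from $19\,033\,744\,403$ to $x_0$. For this I would invoke the unconditional lower bound $\pi(x) > \text{li}(x) - \sqrt{x}\log x/(8\pi)$ supplied by Proposition \ref{prop301}, valid for $2\,657 \leq x \leq 5.5 \cdot 10^{25}$. Setting $g(x) = \text{li}(x) - \sqrt{x}\log x/(8\pi) - f(x)$, I would check that $g$ is positive at a suitable point and that $g'(x) \geq 0$ from that point onward; together these give $f(x) \leq \text{li}(x) - \sqrt{x}\log x/(8\pi) < \pi(x)$ throughout the range where the comparison is valid, down to some threshold $x_2$. Because the weaker bounds approximate $\text{li}(x)$ more crudely, the gap $\text{li}(x) - f(x)$ is larger for the columns with fewer terms, so the comparison against the Schoenfeld term becomes effective at a correspondingly smaller $x_2$; this is precisely why the tabulated $x_0$ decreases as terms are dropped. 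Finally, for $x_0 \leq x < x_2$ I would use that $f$ is strictly increasing on the relevant interval and verify by computer that $\pi(p_n) > f(p_{n+1})$ for every prime $p_n \in [x_0, x_2]$, which forces $\pi(x) > f(x)$ between consecutive primes.

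I expect the monotonicity check $g'(x) \geq 0$ to be the main obstacle. Unlike the derivative computation in the proof of Theorem \ref{thm308}, it does not reduce to the positivity of a single polynomial in $\log x$, because the Schoenfeld term $\sqrt{x}\log x/(8\pi)$ contributes a factor of $x^{-1/2}$ to $g'$; one must therefore control this term against the rational contribution coming from $f'$ across the whole interval rather than clearing denominators into one clean polynomial inequality. A secondary nuisance is that for the columns retaining the most terms (the first, with $x_0 = 11\,532\,441\,449$) the Schoenfeld gap is still too small near $x_0$, so $x_2$ lies close to $19\,033\,744\,403$ and the residual direct computation must sweep a comparatively long interval of primes; this is routine but is the computationally heaviest part of the argument.
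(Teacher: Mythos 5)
Your opening and closing steps are, together, exactly the paper's proof, which is only two sentences long: since every column of the table arises from \eqref{3.10} by deleting negative terms from the denominator, Theorem \ref{thm308} gives all five inequalities simultaneously for every $x \geq 19\,033\,744\,403$, and the remaining range $[x_0, 19\,033\,744\,403]$ is settled by precisely the computer verification $\pi(p_n) > f(p_{n+1})$ that you describe. In that sense your proposal is correct and coincides with the paper's argument.

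Your middle step, however --- bridging part of the range via Proposition \ref{prop301} --- cannot do what you expect, and your account of it contains a concrete quantitative error. For the first column the leading discrepancy between $f$ and $\text{li}$ comes from $a_2 = 2.85$ against Panaitopol's $k_2 = 3$, so $\text{li}(x) - f(x) \approx 0.15\,x/\log^4 x$; the comparison $f(x) \leq \text{li}(x) - \sqrt{x}\log x/(8\pi)$ therefore requires roughly $0.15\sqrt{x} \geq \log^5 x/(8\pi)$, whose crossover lies near $x \approx e^{32} \approx 8 \cdot 10^{13}$. At $x = 19\,033\,744\,403$ one has $\text{li}(x) - f(x) \approx 10^{4}$ while $\sqrt{x}\log x/(8\pi) \approx 1.3 \cdot 10^{5}$, so your $g$ is negative on the entire interval you want it to cover: $x_2$ is not ``close to'' $19\,033\,744\,403$ but three to four orders of magnitude above it, and the analogous failure occurs for columns two through four (crossovers near $8\cdot 10^{13}$, $3\cdot 10^{13}$ and $10^{12}$ respectively). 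Only in the last column, where the gap is $\approx 3x/\log^4 x$, does the comparison reach below $19\,033\,744\,403$ (crossover a few times $10^9$), and there it merely shortens the sweep. Your proof survives only because the step is redundant: Theorem \ref{thm308} already covers every $x$ at which the Schoenfeld comparison works, so the sensible cap for the computation is $19\,033\,744\,403$, not $x_2$ (taken literally, your sweep up to $x_2 \approx 8\cdot 10^{13}$ for column one would involve some $10^{12}$ primes). The contrast with Corollary \ref{kor303}, whose template you transplanted, is instructive: there the bridging step is indispensable, because replacing $463.2275/\log^5 x + 4585/\log^6 x$ by $540.59/\log^5 x$ enlarges the denominator only when $\log x \geq 4585/77.3625 \approx 59.3$, i.e. $x \gtrsim 5.5 \cdot 10^{25}$, so Theorem \ref{thm302} covers nothing below that threshold and Proposition \ref{prop301} must carry the range down to $10^{14}$. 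Here every column is a pure deletion, no bridge is needed or possible, and the decreasing values of $x_0$ across the table are not ``explained'' by the Schoenfeld term: they are simply the empirical thresholds at which each inequality begins to hold, located by the computation itself.
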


\begin{proof}
By comparing each right-hand side with the right-hand side of \eqref{3.10}, we see that each inequality holds for every $x \geq 19\,033\,744\,403$. For smaller
values of $x$ we use computer.
\end{proof}

Now, we apply Proposition \ref{prop205} to obtain the following result, which refines Theorem \ref{thm308} for all sufficiently large values of $x$.

\begin{prop} \label{prop310}
For every $x \geq 19\,423$, we have
\begin{equation}
\pi(x) > \frac{x}{\log x - 1 - \frac{1}{\log x} - \frac{3}{\log^2x} + \frac{87}{\log^3 x}}. \tag{3.11} \label{3.11}
\end{equation}
\end{prop}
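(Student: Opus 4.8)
The plan is to reuse the template of Theorem~\ref{thm308}, replacing the pair $(k,\eta_k)=(3,0.15)$ coming from Theorem~\ref{thm204} by the pair $(4,100)$ supplied by Proposition~\ref{prop205}. Denote by $f(x)$ the right-hand side of \eqref{3.11} and by $r(x)$ its denominator, and set $x_1=10^{15}$ (any value $\geq 70\,111$ is admissible, and $x_1=10^{15}$ lets me read the needed data off the tables already used above). First I would establish \eqref{3.11} on $[x_1,\infty)$ by showing that
\begin{displaymath}
g(x)=J_{4,-100,x_1}(x)-f(x)
\end{displaymath}
is positive there; then the lower estimate in \eqref{3.2} together with Proposition~\ref{prop205} yields $\pi(x)\geq J_{4,-100,x_1}(x)>f(x)$. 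As in Theorem~\ref{thm308}, to obtain $g(x)>0$ for $x\geq x_1$ it suffices to verify the base value $g(x_1)>0$ --- using $\pi(10^{15})=29\,844\,570\,422\,669$ and the upper bound for $\vartheta(10^{15})$ from \cite[Table~6.2]{dusart2010} --- and then to show $g'(x)>0$ for all $x\geq x_1$.

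The derivative step is the main obstacle. Writing $y=\log x$, one has
\begin{displaymath}
J_{4,-100,x_1}'(x)=\frac{1}{y}-\frac{100}{y^5}+\frac{400}{y^6},\qquad f'(x)=\frac{r(x)-x\,r'(x)}{r^2(x)},
\end{displaymath}
and clearing the common denominator $r^2(x)\log^{N}x$, for a suitable power $N$, reduces $g'(x)>0$ to the positivity of an explicit polynomial $h(y)$. The delicate feature is that the highest-order terms cancel --- as they must, since $J_{4,-100,x_1}$ and $f$ agree with $\pi(x)$ to the same order --- so the surviving polynomial has strictly smaller degree; I expect its leading coefficient to be positive, after which $h(y)>0$ for every $y\geq\log x_1=15\log 10\approx 34.54$ follows by a routine coefficientwise estimate. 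This cancellation, rather than the final positivity check, is where care is needed.

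It remains to treat $19\,423\leq x<x_1$, where a genuine lower bound for $\pi(x)$ must replace the comparison with $\text{li}(x)$ used for upper bounds (the inequality \eqref{3.4} being useless in this direction). I would invoke Proposition~\ref{prop301}, which gives $\pi(x)>\text{li}(x)-\sqrt{x}\log x/(8\pi)$ throughout $2\,657\leq x\leq 5.5\cdot 10^{25}$, and compare $f(x)$ with $L(x)=\text{li}(x)-\sqrt{x}\log x/(8\pi)$; via the sign of $(L-f)'$ and one base value this should give $f(x)<L(x)\leq\pi(x)$ for all $x$ down to some threshold $x_2$ well below $x_1$. Finally, on $[19\,423,x_2]$ I would conclude by direct computation: $f$ is strictly increasing there, so it suffices to check $\pi(p_n)>f(p_{n+1})$ for every $n$ with $\pi(19\,423)\leq n\leq\pi(x_2)$, exactly as in the last step of Theorem~\ref{thm308}. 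The one bookkeeping point is to calibrate $x_2$ so that $[x_2,x_1]$ is fully covered by the Schoenfeld--Büthe bound while $[19\,423,x_2]$ stays small enough for a feasible prime-by-prime verification.
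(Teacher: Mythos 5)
Your proposal is correct and its core coincides with the paper's: denote the right-hand side of \eqref{3.11} by $f$, compare it with $J_{4,-100,x_1}$, conclude $\pi(x)\geq J_{4,-100,x_1}(x)>f(x)$ for $x\geq x_1$ via \eqref{3.2} and Proposition~\ref{prop205}, and finish below $x_1$ with a prime-by-prime check using the monotonicity of $f$. Your worry about the derivative step is unfounded but well placed: the $x/\log^5x$ terms do cancel (the expansion of $f$ has fifth coefficient $-76$, matching $24-100=-76$ from $J_{4,-100,x_1}$), and the surviving difference behaves like $171x/\log^6 x>0$, so the reduced polynomial has positive leading behaviour as you expected. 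Where you diverge from the paper is the choice $x_1=10^{15}$, and this is what makes your proof longer than it needs to be. The paper exploits precisely the observation you make in passing --- any $x_1\geq 70\,111$ is admissible --- and takes $x_1=10^6$, for which $\pi(10^6)$ and $\vartheta(10^6)$ are exactly computable; the entire remaining range $[19\,423,10^6)$ is then one small computer check (about $7.8\cdot 10^4$ primes), and Proposition~\ref{prop301} never enters. Your large $x_1$ instead forces the bridge $f(x)<\text{li}(x)-\sqrt{x}\log x/(8\pi)\leq\pi(x)$, and that bridge has a quantitative floor: since $\text{li}(x)-f(x)\approx 100x/\log^5x$, the requirement $100x/\log^5 x>\sqrt{x}\log x/(8\pi)$ fails below roughly $x\approx 10^9$, so your threshold $x_2$ cannot be pushed much lower than that, and the closing verification then runs over all primes up to about $10^9$ (some $5\cdot 10^7$ of them). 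That is still feasible --- the paper performs comparable computations elsewhere --- but it is orders of magnitude more work than the paper's two-step argument, purely because of the parameter choice.
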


\begin{proof}
Let $x_1 = 10^6$ and denote the right-hand side of \eqref{3.11} by $f(x)$. A comparsion with $J_{4, -100, x_1}(x)$ gives that  $J_{4, -100, x_1}(x) > f(x)$ for
every $x \geq 10^6$. Now we use \eqref{3.2} and Proposition \ref{prop205} to get that $\pi(x) > f(x)$ for every $x \geq 10^6$. To prove that the inequality 
\eqref{3.11} is also valid for every $x$ such that $19423 \leq x < 10^6$, it suffices to check with a computer that $\pi(p_n) > f(p_{n+1})$ for every positive 
integer $n$ such that $\pi(19\,423) \leq n \leq \pi(10^6)$, since $f$ is a strictly increasing function on the interval $(1, \infty)$.
\end{proof}

The asymptotic expansion \eqref{3.8} implies that the inequality
\begin{displaymath}
\pi(x) > \frac{x}{\log x} + \frac{x}{\log^2 x} + \frac{2x}{\log^3 x} + \frac{6x}{\log^4 x} + \frac{24x}{\log^5 x} + \ldots + \frac{(n-1)! x}{\log^nx}
\end{displaymath}
holds for all sufficiently large values of $x$. The best explicit result in this direction was given in \cite[Theorem 1.2]{axler2016}, namely that
\begin{equation}
\pi(x) > \frac{x}{\log x} + \frac{x}{\log^2 x} + \frac{2x}{\log^3 x} + \frac{5.65x}{\log^4 x} + \frac{23.65x}{\log^5 x} + \frac{118.25x}{\log^6 x} + 
\frac{709.5x}{\log^7 x} + \frac{4966.5x}{\log^8 x} \tag{3.12} \label{3.12}
\end{equation} 
for every $x \geq 1\,332\,450\,001$. A consequence of Theorem \ref{thm308} is the following refinement of \eqref{3.12}.

\begin{prop} \label{prop311}
For every $x \geq 19\,027\,490\,297$, we have
\begin{displaymath}
\pi(x) > \frac{x}{\log x} + \frac{x}{\log^2 x} + \frac{2x}{\log^3 x} + \frac{5.85x}{\log^4 x} + \frac{23.85x}{\log^5 x} + \frac{119.25x}{\log^6 x} + 
\frac{715.5x}{\log^7 x} + \frac{5008.5x}{\log^8 x}.
\end{displaymath}
\end{prop}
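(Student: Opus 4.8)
The plan is to obtain the inequality as a direct consequence of Theorem~\ref{thm308}, following the same strategy used to deduce Corollary~\ref{kor309}. Denote by $f(x)$ the right-hand side of the asserted inequality and by $F(x)$ the right-hand side of \eqref{3.10}. By Theorem~\ref{thm308} we already have $\pi(x) > F(x)$ for every $x \geq 19\,033\,744\,403$, so it is enough to establish the elementary comparison $f(x) \leq F(x)$ on this range and then to dispose of the short remaining interval $19\,027\,490\,297 \leq x < 19\,033\,744\,403$ by a finite computation.

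For the comparison I would write $y = \log x$ and let $D(y)$ be the denominator of $F$, so that $F(x) = x/D(y)$ and $D(y) = y(1 - v(y))$ with $v(y) = (1 + y^{-1} + 2.85\,y^{-2} + 13.15\,y^{-3} + 70.7\,y^{-4} + 458.7275\,y^{-5} + 3428.7225\,y^{-6})/y$. For $x \geq 19\,033\,744\,403$ one has $y > 23.6$, hence $D(y) > 0$ and $0 < v(y) < 1$, so $1/D(y)$ is given by the absolutely convergent geometric series $y^{-1}\sum_{j \geq 0} v(y)^j$. Since every coefficient of $v(y)$ is nonnegative, collecting this series by powers of $y$ yields an expansion $x^{-1}F(x) = \sum_{n \geq 1} s_n\,y^{-n}$ with all $s_n \geq 0$. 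A short calculation shows that $s_1, \ldots, s_8$ are exactly $1, 1, 2, 5.85, 23.85, 119.25, 715.5, 5008.5$; that is, $x^{-1}f(x)$ is precisely the truncation of this expansion after the term $5008.5\,y^{-8}$. Consequently $F(x) - f(x) = x\sum_{n \geq 9} s_n\,y^{-n} \geq 0$, which is the desired comparison. Equivalently, and more in the spirit of the other proofs of this section, one may clear the positive denominator $D(y)$ and verify the resulting polynomial inequality in $y$ for every $y \geq \log(19\,033\,744\,403)$.

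Combining $f(x) \leq F(x)$ with Theorem~\ref{thm308} then gives $\pi(x) > f(x)$ for every $x \geq 19\,033\,744\,403$. It remains to treat $19\,027\,490\,297 \leq x < 19\,033\,744\,403$. Here $f$ is strictly increasing (which is clear for such large $x$, since $f(x) = x\,g(\log x)$ with $g(y) \sim y^{-1}$ and $g'(y) = O(y^{-2})$, so $f'(x) = g(\log x) + g'(\log x) > 0$), so it suffices to check with a computer that $\pi(p_n) > f(p_{n+1})$ for every positive integer $n$ with $\pi(19\,027\,490\,297) \leq n \leq \pi(19\,033\,744\,403)$, which ranges over only finitely many primes.

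I expect no serious analytic obstacle here: the entire content is bookkeeping plus a finite verification. The one point requiring genuine care is confirming that the truncation coefficients $s_1, \ldots, s_8$ of $F$ agree exactly with the coefficients of $f$ and that the tail $\sum_{n \geq 9} s_n\,y^{-n}$ is honestly nonnegative; the latter hinges on the fact that every coefficient in the denominator of $F$ is nonnegative, so that $v(y)$, and hence each power $v(y)^j$, has nonnegative coefficients and the rearrangement of the geometric series by powers of $y$ is legitimate. Once the comparison $f \leq F$ is secured on the range of Theorem~\ref{thm308}, the computer check over the narrow interval below $19\,033\,744\,403$ completes the argument.
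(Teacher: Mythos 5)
Your proposal is correct and follows essentially the same route as the paper: the paper likewise deduces the bound from Theorem \ref{thm308} by showing that the stated right-hand side is dominated by the right-hand side of \eqref{3.10} for $x \geq 19\,033\,744\,403$, and then handles the interval $[19\,027\,490\,297,\,19\,033\,744\,403]$ by the same monotonicity-plus-computer check of $\pi(p_n) > U(p_{n+1})$. The only difference is presentational: where you expand $1/D(y)$ as a geometric series with nonnegative coefficients and identify $f$ as its truncation, the paper verifies the equivalent cleared-denominator polynomial identity $y^{13}R(y)S(y) = y^{14} - T(y)$ with $T$ having positive coefficients, which is exactly the alternative formulation you mention at the end of your argument.
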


\begin{proof}
Let $U(x)$ denotes the right-hand side of the required inequality
and let $R(y) = U(y)\log y/y$. Further, we set $S(y) = (y^7 - y^6 - y^5 - 2.85y^4 - 13.15y^3 - 70.7y^2 - 458.7275y - 3428.7225)/y^6$. Then $S(y) > 0$ for every
$t > 3.79$ and $y^{13}R(y)S(y) = y^{14} - T(y)$, where $T(y) = 11137.2625y^6 + 19843.008375y^5 + 63112.7025y^4 + 252925.911y^3 + 1091195.634375y^2 + 
475078.76325y + 17172756.64125$. By Theorem \eqref{thm308},
\begin{displaymath}
\pi(x) > \frac{x}{S(\log x)} > \frac{x}{S(\log x)} \left( 1 - \frac{T(\log x)}{\log^{14} x} \right) = U(x)
\end{displaymath}
for every $x \geq 19\,033\,744\,403$. So it remains to deal with the case where $19\,027\,490\,297 \leq x \leq 19\,033\,744\,403$. Since $U(x)$ is a strictly
increasing function for every $x \geq 44$, it suffices to check with a computer that $\pi(p_n) > U(p_{n+1})$ for every positive integer $n$ such that 
$\pi(19\,027\,490\,297) \leq n \leq \pi(19\,033\,744\,403)$.
\end{proof}

\section{On the existence of prime numbers in short intervals}

Bertrand's postulate states that for each positive integer $n$ there is a prime number $p$ with $n < p \leq 2n$, and was proved, for instance, by Chebyshev
\cite{cheby1850} and by Erdös \cite{er2}. In the following, we note some of the remarkable improvements of Bertrand's postulate. The first result is due to 
Schoenfeld \cite[Theorem 12]{schoenfeld1976}. He found that for every $x \geq 2\,010\,759.9$ there is a prime number $p$ with $x < p < x(1 + 1/16\,597)$. In 
2003, Ramar\'{e} and Saouter \cite[Theorem 3]{rasa}found that for every $x \geq 10\,726\,905\,041$ there is a prime number $p$ so that $x < p \leq x( 1 + 
1/28\,313\,999)$. Further, they \cite[Table 1]{rasa} gave a table of sharper results, which hold for large $x$. In 2014, Kadiri and Lumley \cite[Table 2]{kl} 
found a series of improvements. For instance, they showed that for every $x \geq e^{150}$ there is a prime number $p$ such that $x < p < x(1 + 
1/2\,442\,159\,713)$. In 1998, Dusart \cite[Th\'{e}or\`{e}me 1.9]{pd1} proved that for every $x \geq 3\,275$ there exists a prime number $p$ such that $x < p 
\leq x(1 + 1/(2 \log^2 x))$ and then, in 2010, reduced the interval himself \cite[Proposition 6.8]{dusart2010} by showing that for every $x \geq 396\,738$ there 
is a prime number $p$ satisfying $x < p \leq x(1+1/(25 \log^2 x))$. In 2016, Trudgian \cite[Corollary 2]{trud} proved that for every $x \geq 2\,898\,242$ there 
exists a prime number $p$ with
\begin{equation}
x < p \leq x\left( 1 + \frac{1}{111 \log^2 x} \right). \tag{4.1} \label{4.1}
\end{equation}
Recently, Dusart \cite[Corollary 5.5]{dusart2017} improved Trudgian's result by showing that for every $x \geq 468\,991\,632$ there exists a prime number $p$
such that
\begin{equation}
x < p \leq x\left( 1 + \frac{1}{5\,000 \log^2 x} \right). \tag{4.2} \label{4.2}
\end{equation}
In \cite[Theorem 1.5]{axler2016}, it is shown that for every $x \geq 58\,837$ there is a prime number $p$ such that $x < p \leq x( 1 + 1.1817/\log^3 x)$. In
\cite[Proposition 5.4]{dusart2017}, Dusart refined the last result by showing that for every $x \geq 89\,693$ there exists a prime number $p$ such that
\begin{equation}
x < p \leq x\left( 1 + \frac{1}{\log^3 x} \right). \tag{4.3} \label{4.3}
\end{equation}
In the following theorem, we improve \eqref{4.3} on the one hand by decreasing the coefficient of the $1/\log^3 x$ term and on the other hand by increasing the
exponent of the $\log x$ term. In order to do this, we use some estimates for the prime counting function obtained in Section 3.

\begin{thm} \label{thm401}
For every $x \geq 6\,034\,256$ there is a prime number $p$, such that
\begin{displaymath}
x < p \leq x \left( 1 + \frac{0.087}{\log^3x} \right),
\end{displaymath}
and for every $x > 1$ there is a prime number $p$, such that
\begin{equation}
x < p \leq x \left( 1 + \frac{198.2}{\log^4x} \right). \tag{4.4} \label{4.4}
\end{equation}
\end{thm}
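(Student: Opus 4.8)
The plan is to reduce each statement to the assertion that $\pi$ strictly increases across the prescribed interval. Writing $\delta_1(x) = 0.087/\log^3 x$ and $\delta_2(x) = 198.2/\log^4 x$, a prime lies in $(x, x(1+\delta_i(x))]$ exactly when $\pi(x(1+\delta_i(x))) > \pi(x)$, since the difference counts the primes in that half-open interval and is an integer. I would therefore bound the count at the larger endpoint from below and the count at $x$ from above, using the explicit estimates of Section 3: the lower bound of Theorem \ref{thm308} applied at $y = x(1+\delta_i(x))$, and the upper bound of Theorem \ref{thm302} applied at $x$. If the lower bound at $y$ is at least the upper bound at $x$, then $\pi(y) > \pi(x)$ follows and a prime is forced into the interval.

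For the first inequality set $y = x(1+\delta_1(x))$ and let $L$ and $U$ denote the rational functions on the right of \eqref{3.10} and \eqref{3.3}. Since $y > x$, Theorem \ref{thm308} gives $\pi(y) > L(y)$ as soon as $x \geq 19\,033\,744\,403$, while Theorem \ref{thm302} gives $\pi(x) < U(x)$ for all $x$ in range. I would then establish $L(y) \geq U(x)$ by substituting $\log y = \log x + \log(1+\delta_1(x))$ and controlling the shift in the logarithm via $\delta_1(x) - \tfrac12\delta_1(x)^2 < \log(1+\delta_1(x)) < \delta_1(x)$. Clearing the two positive denominators and multiplying by a suitable power of $\log x$ reduces the desired inequality $L(y) \geq U(x)$ to the positivity of an explicit polynomial in the single variable $u = \log x$; I would verify this polynomial is positive for all $u \geq \log(19\,033\,744\,403)$, which yields the bound for every $x \geq 19\,033\,744\,403$.

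It then remains to treat the finite window $6\,034\,256 \leq x < 19\,033\,744\,403$ by direct computation. Because $\log x > 3$ there, the map $x \mapsto x(1+\delta_1(x)) = x + 0.087\,x/\log^3 x$ is strictly increasing, so it suffices to check for each pair of consecutive primes $p_n, p_{n+1}$ in the range that $p_{n+1} \leq p_n(1+\delta_1(p_n))$: for any $x \in [p_n, p_{n+1})$ the right endpoint $x(1+\delta_1(x))$ is then at least $p_{n+1} > x$, placing $p_{n+1}$ in $(x, x(1+\delta_1(x))]$. This is a finite verification over the primes in the interval.

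The second inequality is handled in the same manner with $y = x(1+\delta_2(x))$, $\log(1+\delta_2(x)) < \delta_2(x)$, and the same pair of bounds from Theorems \ref{thm302} and \ref{thm308}; the analytic comparison again collapses to an explicit polynomial inequality in $\log x$ valid beyond the threshold where Theorem \ref{thm308} applies, with the intermediate range dispatched by the consecutive-prime check above. For $x$ near $1$ the quantity $198.2/\log^4 x$ is enormous, so the interval contains a prime trivially and only a short stretch needs the computational argument. The main obstacle in both parts is the polynomial inequality produced by the comparison: expanding $L(x(1+\delta_i(x)))$ through $\log(x(1+\delta_i(x)))$ and subtracting $U(x)$ yields a high-degree polynomial in $\log x$ whose coefficients must be computed exactly and whose positivity must be certified from the chosen threshold onward, and I must confirm that this threshold does not exceed the point $19\,033\,744\,403$ at which Theorem \ref{thm308} becomes available, so that the remaining computational range stays finite.
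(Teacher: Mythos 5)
Your reduction to $\pi(x(1+\delta_i(x))) > \pi(x)$ and your handling of the finite range by checking consecutive primes are both fine, but the analytic core of the plan --- proving $L(y) \geq U(x)$ with $L$ the lower bound of Theorem \ref{thm308} and $U$ the upper bound of Theorem \ref{thm302} --- cannot be carried out, because that inequality is \emph{false} for every $x$ in the range where you need it. Write $D_U(u) = u - 1 - \frac{1}{u} - \frac{3.15}{u^2} - \frac{12.85}{u^3} - \frac{71.3}{u^4} - \frac{463.2275}{u^5} - \frac{4585}{u^6}$ and $D_L(u) = u - 1 - \frac{1}{u} - \frac{2.85}{u^2} - \frac{13.15}{u^3} - \frac{70.7}{u^4} - \frac{458.7275}{u^5} - \frac{3428.7225}{u^6}$ for the two denominators, and put $u = \log x$, $\delta_1(x) = 0.087/\log^3 x$, $y = x(1+\delta_1(x))$. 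Since both denominators are positive here, $L(y) \geq U(x)$ is equivalent to $(1+\delta_1(x))\,D_U(u) \geq D_L(\log y)$. But $D_L$ is increasing and $\log y > u$, while $\delta_1(x) D_U(u) < \delta_1(x)\, u = 0.087/u^2$, so
\begin{displaymath}
(1+\delta_1(x))D_U(u) - D_L(\log y) \;<\; D_U(u) - D_L(u) + \frac{0.087}{u^2} \;=\; -\frac{0.3-0.087}{u^2} + \frac{0.3}{u^3} - \frac{0.6}{u^4} - \frac{4.5}{u^5} - \frac{1156.2775}{u^6} \;<\; 0
\end{displaymath}
for every $u > 1.41$. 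So $L(y) < U(x)$ identically beyond your threshold, and the polynomial whose positivity you intend to certify is in fact negative.

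The obstruction is structural, not a matter of sharper algebra: the gap between the bounds of Theorems \ref{thm302} and \ref{thm308} is governed by the difference $3.15 - 2.85 = 0.3$ of the $1/\log^2$-coefficients, i.e.\ $U(x) - L(x) \sim 0.3x/\log^4 x$, whereas the interval $(x, x(1+\delta_1(x))]$ contains only about $0.087x/\log^4 x$ primes. Since $0.087 < 0.3$, \emph{no} sandwich between these two bounds can detect a prime in so short an interval; both bounds descend from the estimate $|\vartheta(x)-x| < 0.15x/\log^3 x$ of Theorem \ref{thm204}, and one would need $2\eta_3 < 0.087$ for such an argument to close. The same failure hits your second part once $\log x \gtrsim 662$, since $198.2x/\log^5 x$ eventually drops below $0.3x/\log^4 x$, yet the claim is for all $x>1$. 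This is exactly why the paper argues with $\vartheta$ directly rather than through $\pi$: it first sharpens Theorem \ref{thm204} to $|\vartheta(x)-x| < 0.043x/\log^3 x$ for $x \geq e^{40}$ (inequality \eqref{4.5}, with $2\cdot 0.043 = 0.086 < 0.087$), which forces $\vartheta(x(1+\delta_1(x))) - \vartheta(x) > 0$ and hence a prime; the range $468\,991\,632 \leq x \leq e^{40}$ is then covered by Dusart's result \eqref{4.2} and the rest by computation, and part two runs analogously via \eqref{4.6} with $2 \cdot 99.07 = 198.14 < 198.2$. Any repair of your $\pi$-based sandwich would likewise require bounds built from an error term with $2\eta_3 < 0.087$, which are only available for very large $x$, and your two-tier structure (computation up to $19\,033\,744\,403$, analysis above) would then leave an unbridgeable range unless you also import something like \eqref{4.2}.
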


\begin{proof}
Similar to the proof of Theorem \ref{thm204}, we get that
\begin{equation}
|\vartheta(x) - x| < \frac{0.043x}{\log^3x} \tag{4.5} \label{4.5}
\end{equation}
for every $x \geq e^{40}$. Setting $f(x) = 0.087/\log^3x$, we use \eqref{4.5} to get that
\begin{displaymath}
\vartheta(x+xf(x)) - \vartheta(x) > \frac{x}{\log^3x} \left( 0.001 - \frac{0.003741}{\log^3x} \right) \geq 0
\end{displaymath}
for every $x \geq e^{40}$, which implies that for every $x \geq e^{40}$ there is a prime number $p$ satisfying $x < p \leq x(1+0.087/\log^3x)$. From \eqref{4.2} 
it is clear that the claim follows for every $x$ such that $468\,991\,632 \leq x \leq e^{40}$. To deal with the case where $156\,007 \leq x \leq 468\,991\,632$, 
we check with a computer that the inequality $p_n(1 + 0.087/\log^3 p_n) > p_{n+1}$ holds for every positive integer $n$ such that $\pi(6\,034\,393) \leq n \leq 
\pi(468\,991\,632)$. Finally, we notice that $\pi(x(1+ 0.087/\log^3 x)) > \pi(x)$ for every $x$ such that $6\,034\,256 \leq x < 6\,034\,393$, which completes 
the proof of the first part.

We define $g(x) = 198.2/\log^4x$. To show the second part, we first note that
\begin{equation}
|\vartheta(x) - x| < \frac{99.07x}{\log^4x} \tag{4.6} \label{4.6}
\end{equation}
for every $x \geq e^{25}$. The proof of this inequality is quite similar to the proof of Proposition \ref{prop205} and we leave the details to the reader. 
Using \eqref{4.6}, we obtain that
\begin{displaymath}
\vartheta(x+xg(x)) - \vartheta(x) > \frac{x}{\log^4x} \left( 0.06 - \frac{19635.674}{\log^4x} \right) \geq 0
\end{displaymath}
for every $x \geq e^{25}$. Analogously to the proof of the first part, we check with a computer that for every $1 < x < e^{25}$ there is a prime $p$ so that $x 
< p \leq x( 1 + 198.2/\log^4x)$.
\end{proof}

By using \eqref{4.1}, Dudek \cite[Theorem 3.6]{dudek} purports to prove that for every positive integer $m \geq 4.971 \cdot 10^9$ there exists a prime number
between $n^m$ and $(n+1)^m$ for all $n \geq 1$. In fact, he showed the slightly weaker lower bound $m \geq 4.97117 \cdot 10^9$. Applying \eqref{4.4} to Dudek's 
proof, we get the following refinement.

\begin{prop} \label{prop402}
Let $m \geq 3\,239\,773\,013$. Then there is a prime between $n^m$ and $(n+1)^m$ for all $n \geq 1$. 
\end{prop}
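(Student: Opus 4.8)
The plan is to run the argument of \cite[Theorem~3.6]{dudek} without structural change, substituting the short-interval estimate \eqref{4.4} for the weaker bound \eqref{4.1} on which Dudek relies. The mechanism is the following: since $\log(n^m)=m\log n$, inequality \eqref{4.4} applied at $x=n^m$ furnishes a prime $p$ with
\begin{displaymath}
n^m<p\leq n^m\left(1+\frac{198.2}{(m\log n)^4}\right),
\end{displaymath}
and this $p$ already lies in the target interval $(n^m,(n+1)^m]$ as soon as
\begin{displaymath}
\left(1+\frac1n\right)^m\geq 1+\frac{198.2}{(m\log n)^4}.
\end{displaymath}
Thus the existence of a prime between $n^m$ and $(n+1)^m$ reduces to this single comparison between the multiplicative length of the interval and the width of the short interval supplied by \eqref{4.4}.

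First I would dispose of the comparison by convexity: Bernoulli's inequality gives $(1+1/n)^m\geq 1+m/n$, so it suffices to check
\begin{displaymath}
198.2\,n\leq m^5\log^4 n.
\end{displaymath}
For $m\geq 3\,239\,773\,013$ this holds for every $n$ from $2$ up to a large explicit threshold $N$, which one locates by analysing the sign of $n\mapsto m^5\log^4 n-198.2\,n$; the case $n=1$ is immediate since $(1,2^m)$ contains the prime $2$. For the remaining $n>N$ the number $n^m$ is so enormous that a prime in $(n^m,(n+1)^m)$ is guaranteed by the sharper estimates available in that range, exactly as in Dudek's proof. Here the improved exponent of $\log x$ in \eqref{4.4} (four rather than the two in \eqref{4.1}) is decisive: although its constant $198.2$ is larger than $1/111$, at the relevant scale $m\log n$ the width $198.2/(m\log n)^4$ is far smaller than $1/(111(m\log n)^2)$, so the short-interval step reaches a much larger $N$ and meets the large-argument range for a smaller admissible $m$. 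I would then re-run the constant bookkeeping, first confirming the exact value Dudek's method yields (the paper notes that his stated $4.971\cdot10^9$ should read $4.97117\cdot10^9$) and then tracking the constants through \eqref{4.4} to arrive at $3\,239\,773\,013$.

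The main obstacle is uniformity in $n$. The displayed comparison is trivial for small $n$, where $(1+1/n)^m$ is astronomically large, but it \emph{degrades} as $n$ grows, because the multiplicative length $(1+1/n)^m-1\sim m/n$ decays faster than the short-interval width $198.2/(m\log n)^4$; hence the short-interval step alone cannot cover all $n$, and the delicate point is to choose $m$ so that the range it does cover dovetails with the range handled by the large-argument estimate of Dudek's framework. Pinning down this crossover precisely, and thereby certifying that the explicit constant $3\,239\,773\,013$ indeed suffices, is the step requiring the most care; once \eqref{4.4} is available it becomes a matter of explicit, if somewhat tedious, estimation.
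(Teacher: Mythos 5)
Your proposal follows essentially the same route as the paper's proof: both apply \eqref{4.4} at $x=n^m$, reduce via Bernoulli's inequality (equivalently, the binomial theorem) to the condition $198.2\,n\leq m^5\log^4 n$, which holds up to an explicit threshold $n_0(m)\geq 4.18\cdot 10^{53}$ when $m\geq 3\,239\,773\,013$, and cover all larger $n$ by Dudek's large-argument bound $n_1(m)=\exp(1000\exp(19.807)/m)$, the case $n=1$ being trivial. The remaining work you flag---pinning down the crossover and certifying $n_1(m)\leq n_0(m)$---is exactly the numerical verification the paper performs.
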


\begin{proof}
Let $m \geq M_0$, where $M_0 = 3\,239\,773\,013$. First, we set $x = n^m$ in \eqref{4.4} to get that there is a prime $p$ so that
\begin{equation}
n^m \leq p < n^m \left( 1 + \frac{198.2}{\log^4(n^m)} \right) \tag{4.7} \label{4.7}
\end{equation}
for every $n \geq 2$. We have
\begin{equation}
n^m \left( 1 + \frac{198.2}{\log^4(n^m)} \right) \leq n^m + mn^{m-1} \tag{4.8} \label{4.8}
\end{equation}
if and only if $198.2n/\log^4n \leq m^5$. Setting $n_0(m) = \max \{ x \in \N \mid 198.2x/\log^4x \leq m^5 \}$, we get $n_0(m) \geq n_0(M_0) \geq 4.18498732 
\cdot 10^{53}$.
Now, we apply \eqref{4.8} to \eqref{4.7} to get that there is a prime $p$ so that
\begin{equation}
n^m \leq p < n^m + mn^{m-1} \tag{4.9} \label{4.9}
\end{equation}
for every $2 \leq n \leq n_0(m)$. By the binomial theorem, we have $n^m + mn^{m-1} \leq (n+1)^m$. So, \eqref{4.9} implies that there is a prime between $n^m$
and $(n+1)^m$ for every $2 \leq n \leq n_0(m)$. On the other hand, Dudek \cite[p. 42]{dudek} showed that for every positive integer $t \geq 1000$ there is a 
prime between $n^t$ and $(n+1)^t$ for every $n \geq n_1(t)$, where $n_1(t) = \exp(1000\exp(19.807)/t)$. Therefore
\begin{displaymath}
n_1(m) = \exp \left( \frac{1000 \exp(19.807)}{m} \right) \leq \exp \left( \frac{1000 \exp(19.807)}{M_0} \right) \leq 4.1849871 \cdot 10^{53}.
\end{displaymath}
Since $n_1(m) \leq n_0(m)$, we conclude the proof for every $n \geq 2$. The remaining case $n=1$ is clear.
\end{proof}

\section{On estimates of two sums over primes}

In this section, we give some refined estimates for the sums
\begin{displaymath}
\sum_{p \leq x} \frac{1}{p}, \q \sum_{p \leq x} \frac{\log p}{p},
\end{displaymath}
where $p$ runs over primes not exceeding $x$.

\subsection{On the sum of the reciprocals of all prime numbers not exceeding $x$}

In 1737, Euler \cite{euler1737} proved that the sum of the reciprocals of all prime numbers diverges.
In particular, this result implies that there are infinitely many primes. Further, Euler \cite[Theorema 19]{euler1737} and later Gauss \cite{gauss1791} stated
that the sum of the reciprocals of all prime numbers not exceeding $x$ grows like $\log \log x$. In 1874, Mertens \cite[p. 52]{mertens1874} used several 
results of Chebyshev's papers \cite{cheby1848}, \cite{cheby1850} to find  that $\log \log x$ is the right order of magnitude for the sum of the reciprocals of 
all prime numbers $p$ not exceeding $x$ by showing that 
\begin{equation}
\sum_{p \leq x} \frac{1}{p} = \log \log x + B + O \left( \frac{1}{\log x} \right). \tag{5.1} \label{5.1}
\end{equation}
Here, $B$ denotes the Mertens' constant (see \cite{sloane1}) and is defined by
\begin{equation}
B = \gamma + \sum_{p} \left( \log \left( 1 - \frac{1}{p} \right) + \frac{1}{p} \right) = 0.2614972128476427837554268386 \ldots , \tag{5.2} \label{5.2}
\end{equation}
where $\gamma = 0.577215664901532860606512090082402431\ldots$ denotes the Euler-Mascheroni constant. In 1962, Rosser and Schoenfeld \cite[p. 74]{rosser1962}
derived a remarkable identity, which connects the sum of the reciprocals of all prime numbers not exceeding $x$ with Chebyshev's $\vartheta$-function, namely
\begin{equation}
\sum_{p \leq x} \frac{1}{p} = \log \log x + B + \frac{\vartheta(x) - x}{x \log x} - \int_x^{\infty} \frac{(\vartheta(y)-y)(1 + \log y)}{y^2\log^2y} \, dy. 
\tag{5.3} \label{5.3}
\end{equation}
Together with \eqref{2.4}, they \cite[p. 68]{rosser1962} refined the error term in Mertens' result \eqref{5.1} by giving
\begin{displaymath}
\sum_{p \leq x} \frac{1}{p} = \log \log x + B + O (\exp(-a\sqrt{\log x})).
\end{displaymath}
Using \eqref{5.3} and explicit estimates for Chebyshev's $\vartheta$-function, Rosser and Schoenfeld \cite[Theorem 5]{rosser1962} were able to find
\begin{displaymath}
\log \log x + B - \frac{1}{2 \log^2 x} < \sum_{p \leq x} \frac{1}{p} < \log \log x + B + \frac{1}{2 \log^2 x},
\end{displaymath}
where the left-hand side inequality is valid for every $x > 1$ and the right-hand side inequality holds for every $x \geq 286$. After some remarkable 
improvements, the currently best known estimates for the sum of the reciprocals of all prime numbers not exceeding $x$ are due to Dusart \cite[Theorem 
5.6]{dusart2017}. He used \eqref{5.3} together with \eqref{2.5} to get that
\begin{equation}
\left| \sum_{p \leq x} \frac{1}{p} - \log \log x - B \right| \leq \frac{\eta_k}{k\log^kx} + \frac{(k+2)\eta_k}{(k+1)\log^{k+1}x} \tag{5.4} \label{5.4}
\end{equation}
for every $x \geq x_0(k)$. Then he \cite[Theorem 5.6]{dusart2017} applied Lemma \ref{lem203} with $k = 3$ and $\eta_3 = 0.5$, and get
\begin{equation}
- \frac{1}{5\log^3x} \leq \sum_{p \leq x} \frac{1}{p} - \log \log x - B  \leq \frac{1}{5\log^3x} \tag{5.5} \label{5.5}
\end{equation}
for every $x \geq 2\,278\,383$. Following Dusart's proof of \eqref{5.5}, we obtain the following slightly refinements of these estimates by using Theorem 
\ref{thm204}.

\begin{prop} \label{prop501}
We have
\begin{displaymath}
- \frac{1}{20\log^3x} - \frac{3}{16\log^4x} \leq \sum_{p \leq x} \frac{1}{p} - \log \log x - B \leq \frac{1}{20\log^3x} + \frac{3}{16\log^4x},
\end{displaymath}
where the left-hand side inequality holds for every $x > 1$ and the right-hand side inequality is valid for every $x \geq 46\,909\,074$.
\end{prop}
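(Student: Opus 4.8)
The plan is to follow Dusart's derivation of \eqref{5.5} almost verbatim, feeding in the sharper constant $\eta_3 = 0.15$ supplied by Theorem \ref{thm204} in place of his $\eta_3 = 0.5$. Write $E(x) = \sum_{p \le x} 1/p - \log\log x - B$ and $U(x) = \tfrac{1}{20\log^3 x} + \tfrac{3}{16\log^4 x}$. I would start from the Rosser--Schoenfeld identity \eqref{5.3}, which expresses $E(x)$ as $(\vartheta(x)-x)/(x\log x)$ minus the tail integral $\int_x^\infty (\vartheta(y)-y)(1+\log y)/(y^2\log^2 y)\,dy$. Applying the triangle inequality together with the two-sided bound $|\vartheta(y)-y| < 0.15\,y/\log^3 y$, which by Theorem \ref{thm204} is valid for every $y \ge 19\,035\,709\,163$, reduces the estimate of $|E(x)|$ to bounding $0.15/\log^4 x$ plus $0.15\int_x^\infty (1+\log y)/(y\log^5 y)\,dy$.

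The remaining integral splits as $\int_x^\infty dy/(y\log^5 y) + \int_x^\infty dy/(y\log^4 y)$, and the substitution $u = \log y$ evaluates these to $1/(4\log^4 x)$ and $1/(3\log^3 x)$. Collecting terms yields $|E(x)| \le 0.15/(3\log^3 x) + (0.15 + 0.15/4)/\log^4 x = \tfrac{1}{20\log^3 x} + \tfrac{3}{16\log^4 x} = U(x)$, which is exactly \eqref{5.4} specialised to $k = 3$, $\eta_3 = 0.15$. This establishes both inequalities of the proposition for every $x \ge 19\,035\,709\,163$.

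It remains to descend below this threshold by a finite computation, exploiting that $E$ is a step function, constant on each interval $[p_n, p_{n+1})$ and jumping up by $1/p_n$ at each prime, while $-\log\log x$ decreases. For the lower bound (claimed for all $x > 1$) the function $E(x) + U(x)$ is decreasing on each such interval, since its derivative $-1/(x\log x) + U'(x)$ is negative; hence the binding case is the limit $x \to p_{n+1}^-$, and it suffices to verify $\sum_{p \le p_n} 1/p - \log\log p_{n+1} - B + U(p_{n+1}) \ge 0$ for every prime $p_{n+1} \le 19\,035\,709\,163$, together with the trivial observation that $U(x) \to +\infty$ as $x \to 1^+$. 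For the upper bound one checks instead that $U(x) - E(x)$ is increasing on each interval, since its derivative equals $\tfrac{1}{x\log x}\bigl(1 - \tfrac{3}{20\log^3 x} - \tfrac{3}{4\log^4 x}\bigr) > 0$ for $x > e$; the binding case is therefore the left endpoint $x = p_n$, just after the jump, so it suffices to verify $U(p_n) \ge \sum_{p \le p_n} 1/p - \log\log p_n - B$ for every prime $p_n$ with $46\,909\,074 \le p_n < 19\,035\,709\,163$.

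The analytic part is routine once the constant $0.15$ from Theorem \ref{thm204} is in hand; the real work is the computer verification, which ranges over the roughly $8.4 \cdot 10^8$ primes up to $19\,035\,709\,163$ and requires maintaining the partial sums $\sum_{p \le p_n} 1/p$ to sufficient precision. The monotonicity reductions above are precisely what turn this into a genuinely finite check, and the only delicate point is pinning down the exact transition value $46\,909\,074$ below which $E$ exceeds $U$.
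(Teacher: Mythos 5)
Your proposal is correct and follows essentially the same route as the paper: the analytic part is exactly Dusart's inequality \eqref{5.4} specialised to $k=3$, $\eta_3 = 0.15$ from Theorem \ref{thm204} (the paper simply cites \eqref{5.4}, whereas you re-derive it from \eqref{5.3}; the constants $0.15/3 = 1/20$ and $\tfrac{5}{4}\cdot 0.15 = 3/16$ check out), and the descent below $19\,035\,709\,163$ is the same prime-by-prime computer verification, for which your monotonicity reductions make explicit what the paper leaves implicit. The only cosmetic caveat is that for the upper bound the check must also cover the non-prime point $x = 46\,909\,074$ itself (which lies strictly inside an interval between consecutive primes), but this is a trivial adjustment to the finite computation.
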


\begin{proof}
We use \eqref{5.4} and Theorem \ref{thm204} to get that these inequalities hold for every $x \geq 19\,035\,709\,163$. To verify that the left-hand side
inequality holds for every $x$ such that $2 \leq x \leq 19\,035\,709\,163$ as well, we check with a computer that for every positive integer $n \leq 
\pi(19\,035\,709\,163)$,
\begin{displaymath}
\sum_{k\leq n} \frac{1}{p_k} \geq \log \log p_{n+1} + B - \frac{1}{20\log^3 p_{n+1}} - \frac{3}{16\log^4 p_{n+1}}.
\end{displaymath}
Clearly, the left-hand side inequality holds for every $x$ such that $1 < x < 2$. A similar calculation shows that the right-hand side inequality holds for
every $x$ such that $46\,909\,074 \leq x \leq 19\,035\,709\,163$ as well.
\end{proof}



\subsection{On another sum over all prime numbers not exceeding $x$}

In 1857, de Polignac \cite[part 3]{polignac} stated without proof that $\log x$ is the right asymptotic behaviour for the sum 
\begin{equation}
\sum_{p \leq x} \frac{\log p}{p}, \tag{5.6} \label{5.6}
\end{equation}
where $p$ runs over primes not exceeding $x$. A rigorous proof for this was given by Mertens \cite[p. 49]{mertens1874} in 1874. He showed that
\begin{equation}
\sum_{p \leq x} \frac{\log p}{p} = \log x + O(1). \tag{5.7} \label{5.7}
\end{equation}
In 1909, Landau \cite[§55]{landau} was able to precise \eqref{5.7} by finding
\begin{displaymath}
\sum_{p \leq x} \frac{\log p}{p} = \log x + E + O (\exp(-\sqrt[14]{\log x})),
\end{displaymath}
where $E$ is a constant (see \cite{sloane2}) defined by
\begin{displaymath}
E = - \gamma - \sum_{p} \frac{\log p}{p(p-1)} = -1.332582275733220881765828776071027748838459 \ldots.
\end{displaymath}
Rosser and Schoenfeld \cite[p. 74]{rosser1962} connected the sum in \eqref{5.6} with Chebyshev's $\vartheta$-function by showing
\begin{equation}
\sum_{p \leq x} \frac{\log p}{p} = \log x + E + \frac{\vartheta(x) - x}{x} - \int_x^{\infty} \frac{\vartheta(y) - y}{y^2} \, dy. \tag{5.8} \label{5.8}
\end{equation}
Using their own explicit estimates for Chebychev's $\vartheta$-function, they \cite[Theorem 6]{rosser1962} found
\begin{displaymath}
\log x + E - \frac{1}{2 \log x} < \sum_{p \leq x} \frac{\log p}{p} < \log x + E + \frac{1}{2\log x},
\end{displaymath}
where the left-hand side inequality is valid for every $x > 1$ and the right-hand side inequality holds for every $x \geq 319$. In 2010, Dusart \cite[Theorem
6.11]{dusart2010} utilized \eqref{5.8} and \ref{2.5} to get that the inequality
\begin{equation}
\left| \sum_{p \leq x} \frac{\log p}{p} - \log x - E \right| \leq \frac{\eta_k}{(k-1)\log^{k-1}x} + \frac{\eta_k}{\log^kx} \tag{5.9} \label{5.9}
\end{equation}
holds for every $x \geq x_0(k)$. Then he \cite[Theorem 5.7]{dusart2016} applied Lemma \ref{lem203} with $k = 3$ and $\eta_3 = 0.5$ to \eqref{5.9} and obtained
the current best estimates for the sum given in \eqref{5.6}, namely
\begin{displaymath}
- \frac{0.3}{\log^2 x} < \sum_{p \leq x} \frac{\log p}{p} - \log x - E < \frac{0.3}{\log^2 x}
\end{displaymath}
for every $x \geq 912\,560$. Now, \eqref{5.9} and Theorem \ref{thm204} imply the following refinement.

\begin{prop} \label{prop504}
We have
\begin{equation}
- \frac{3}{40\log^2x} - \frac{3}{20\log^3x} \leq \sum_{p \leq x} \frac{\log p}{p} - \log x - E  \leq \frac{3}{40\log^2x} + \frac{3}{20\log^3x}, \tag{5.10}
\label{5.10}
\end{equation}
where the left-hand side inequality is valid for every $x > 1$ and the right-hand side inequality holds for every $x \geq 30\,972\,320$.
\end{prop}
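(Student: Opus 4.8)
The plan is to follow the same route as the proof of Proposition \ref{prop501}, deriving the asymptotic range from the general bound \eqref{5.9} and then descending to small $x$ by a computer check. First I would apply \eqref{5.9} with the choice $k = 3$ and $\eta_3 = 0.15$, which is exactly the admissible pair furnished by Theorem \ref{thm204} with threshold $x_0(3) = 19\,035\,709\,163$. Substituting into \eqref{5.9}, the right-hand side becomes
\begin{displaymath}
\frac{0.15}{2\log^2 x} + \frac{0.15}{\log^3 x}.
\end{displaymath}
Since $0.15/2 = 3/40$ and $0.15 = 3/20$, this is precisely $\frac{3}{40\log^2 x} + \frac{3}{20\log^3 x}$, so \eqref{5.9} already delivers both inequalities of \eqref{5.10} for every $x \geq 19\,035\,709\,163$. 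The only remaining task is to extend each inequality to the asserted lower range.

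For the left-hand (lower) inequality, which must hold for every $x > 1$, I would exploit that $\sum_{p \leq x} (\log p)/p$ is a step function, constant on each interval $[p_n, p_{n+1})$, while the comparison function $\log x + E - \frac{3}{40\log^2 x} - \frac{3}{20\log^3 x}$ is strictly increasing on the relevant range. Hence on $[p_n, p_{n+1})$ the comparison function is largest as $x \to p_{n+1}^-$, so it suffices to verify with a computer that
\begin{displaymath}
\sum_{k \leq n} \frac{\log p_k}{p_k} \geq \log p_{n+1} + E - \frac{3}{40\log^2 p_{n+1}} - \frac{3}{20\log^3 p_{n+1}}
\end{displaymath}
for every positive integer $n$ with $p_{n+1} \leq 19\,035\,709\,163$; the trivial range $1 < x < 2$ is immediate.

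For the right-hand (upper) inequality the story is analogous, but the admissible threshold $30\,972\,320$ must be located by direct computation. Using that $\log x + E + \frac{3}{40\log^2 x} + \frac{3}{20\log^3 x}$ is increasing, the step function meets this upper comparison function most tightly at the left endpoints $x = p_n$, so I would check that
\begin{displaymath}
\sum_{k \leq n} \frac{\log p_k}{p_k} \leq \log p_n + E + \frac{3}{40\log^2 p_n} + \frac{3}{20\log^3 p_n}
\end{displaymath}
for all $n$ with $30\,972\,320 \leq p_n \leq 19\,035\,709\,163$. The main obstacle is precisely this last step: the correction terms are tiny, the margin near $x = 30\,972\,320$ is delicate, and pinning down the exact threshold requires a careful large-scale computation of the partial sums together with sufficiently accurate values of $E$ and of the primes in this range.
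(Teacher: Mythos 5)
Your proposal is correct and follows exactly the paper's route: the paper likewise applies \eqref{5.9} with $k=3$, $\eta_3 = 0.15$ and $x_0(3) = 19\,035\,709\,163$ from Theorem \ref{thm204} (noting $0.15/2 = 3/40$ and $0.15 = 3/20$), and then handles smaller $x$ by a computer verification carried out "similarly to the proof of Proposition \ref{prop501}", which is precisely the step-function-versus-monotone-comparison check you describe. The only cosmetic difference is that you spell out the endpoint checks explicitly (including the need to handle the interval containing the non-prime threshold $30\,972\,320$), which the paper leaves implicit.
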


\begin{proof}
From \eqref{5.9} and Theorem \ref{thm204}, it follows that the required inequalities \eqref{5.10} holds for every $x \geq 19\,035\,709\,163$. Similarly to the
proof of Proposition \ref{prop501}, we use a computer to check the desired inequalities for smaller values of $x$.
\end{proof}



\section{Refined estimates for a product over primes}

The asymptotic formula \eqref{5.1} implies that
\begin{displaymath}
\prod_{p \leq x} \left( 1 - \frac{1}{p} \right) = \frac{e^{-\gamma}}{\log x} + O \left( \frac{1}{\log^2 x} \right)
\end{displaymath}
and in this direction, Rosser and Schoenfeld \cite[Theorem 7]{rosser1962} found that
\begin{equation}
\frac{e^{-\gamma}}{\log x} \left( 1 - \frac{1}{2 \log^2 x} \right) < \prod_{p \leq x} \left( 1 - \frac{1}{p} \right) < \frac{e^{-\gamma}}{\log x} \left( 1 + 
\frac{1}{2 \log^2 x} \right), \tag{6.1} \label{6.1}
\end{equation}
where the left-hand side inequality is valid for every $x \geq 285$ and the right-hand side inequality holds for every $x > 1$. After several improvements, the
sharpest known estimates for this product are due to Dusart \cite[Theorem 5.9]{dusart2016}. Following Rosser's and Schoenfeld's proof of \eqref{6.1}, Dusart 
used \eqref{5.4} and Lemma \ref{lem203} with $k=3$ and $\eta_k = 0.5$ to find
\begin{displaymath}
\frac{e^{-\gamma}}{\log x} \left( 1 - \frac{0.2}{\log^3 x} \right) < \prod_{p \leq x} \left( 1 - \frac{1}{p} \right) < \frac{e^{-\gamma}}{\log x} \left( 1 +
\frac{0.2}{\log^3 x} \right)
\end{displaymath}
for every $x \geq 2\,278\,382$. We use the same method combined with Proposition \ref{prop501} to obtain the following

\begin{prop} \label{prop601}
For every $x \geq 46\,909\,038$, we have
\begin{equation}
\prod_{p \leq x} \left( 1 - \frac{1}{p} \right) > \frac{e^{-\gamma}}{\log x} \left( 1 - \frac{1}{20\log^3x} - \frac{3}{16\log^4x} \right), \tag{6.2} \label{6.2}
\end{equation}
and for every $x > 1$, we have
\begin{displaymath}
\prod_{p \leq x} \left( 1 - \frac{1}{p} \right) < \frac{e^{-\gamma}}{\log x} \left( 1 + \frac{0.07}{\log^3x} \right).
\end{displaymath}
\end{prop}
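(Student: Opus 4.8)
The plan is to follow Rosser and Schoenfeld's method and reduce the product to the sum $\sum_{p\le x}1/p$, which is controlled by Proposition \ref{prop501}. Writing $M(x) = \sum_{p\le x}\frac1p - \log\log x - B$ and using the definition \eqref{5.2} of Mertens' constant, I would first split
\[
\sum_{p\le x}\log\left(1-\frac1p\right) = -\sum_{p\le x}\frac1p + \sum_{p\le x}\left(\log\left(1-\frac1p\right)+\frac1p\right)
\]
and replace the second sum by $(B-\gamma)+T(x)$, where
\[
T(x) = -\sum_{p>x}\left(\log\left(1-\frac1p\right)+\frac1p\right) = \sum_{p>x}\sum_{k\ge2}\frac{1}{kp^k} > 0 .
\]
Substituting $\sum_{p\le x}\frac1p = \log\log x + B + M(x)$ and exponentiating yields the basic identity
\[
\prod_{p\le x}\left(1-\frac1p\right) = \frac{e^{-\gamma}}{\log x}\,\exp\left(T(x)-M(x)\right),
\]
so the whole problem reduces to estimating the exponent $T(x)-M(x)$.

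For the lower bound \eqref{6.2} I would simply discard the positive term $T(x)$ and insert the right-hand inequality of Proposition \ref{prop501}, giving $T(x)-M(x) \ge -M(x) \ge -\frac{1}{20\log^3x}-\frac{3}{16\log^4x}$ for every $x \ge 46\,909\,074$. Since $e^y \ge 1+y$, this immediately produces \eqref{6.2} on that range, and a direct computer check (using that the product is constant between consecutive primes while the right-hand side is monotone) settles the short remaining interval $46\,909\,038 \le x < 46\,909\,074$.

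For the upper bound I would instead bound $T(x)$ from above. From $\sum_{k\ge2}\frac{1}{kp^k} \le \frac{1}{2p(p-1)}$ one sees that $T(x)$ is of order $1/x$ and hence utterly negligible against $1/\log^3x$; combined with the left-hand inequality of Proposition \ref{prop501}, which holds for all $x>1$, this gives
\[
T(x)-M(x) \le \frac{1}{20\log^3x}+\frac{3}{16\log^4x}+T(x).
\]
It then remains to verify that the right-hand side is at most $\log\left(1+\frac{0.07}{\log^3x}\right)$. Because $\frac1{20}=0.05<0.07$, the surplus $\frac{0.02}{\log^3x}$ absorbs the $\log^{-4}x$-term, the tail $T(x)$, and the quadratic error in $e^y\le 1+y+\cdots$ once $x$ exceeds an explicit modest bound, after which $\exp\left(T(x)-M(x)\right) \le 1+\frac{0.07}{\log^3x}$ follows; a computer check then disposes of the finitely many smaller $x>1$.

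The main obstacle is not the analytic reduction, which is routine, but pinning down the exact constants and thresholds: one must choose the explicit bound on $T(x)$ and the point beyond which the additive estimate for $T(x)-M(x)$ passes cleanly through the exponential to the multiplicative form $1+\frac{0.07}{\log^3x}$, and then carry out the finite computer verifications — in particular the narrow range $46\,909\,038 \le x < 46\,909\,074$ for the lower bound, where the right-hand inequality of Proposition \ref{prop501} is not yet guaranteed.
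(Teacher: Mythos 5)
Your proposal is correct and follows essentially the same route as the paper: the same reduction of the product to $\sum_{p\le x}1/p$ via the definition \eqref{5.2} of $B$, the same use of both inequalities of Proposition \ref{prop501} together with $e^t\ge 1+t$ for the lower bound, an explicit $O(1/x)$ bound on the tail $T(x)$ (the paper cites Rosser--Schoenfeld's $-S<1.02/((x-1)\log x)$, you derive an equally adequate one directly) for the upper bound, and the same finite computer verifications. The only detail left open in your sketch, the explicit threshold beyond which $\frac{1}{20\log^3x}+\frac{3}{16\log^4x}+T(x)\le\log\bigl(1+\frac{0.07}{\log^3x}\bigr)$, is taken in the paper to be $1.4\cdot 10^8$.
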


\begin{proof}
First, let $x \geq 46\,909\,074$ and $S = \sum_{p>x} ( \log (1 - 1/p) + 1/p) = - \sum_{k=2}^{\infty} \sum_{p > x} 1/kp^k.$
Using the right-hand side inequality of Proposition \ref{prop501} and the definition \eqref{5.2} of $B$, we have
\begin{displaymath}
\prod_{p\leq x} \left( 1 - \frac{1}{p} \right) > \frac{e^{-\gamma}}{\log x} \exp \left( -S - \frac{1}{20\log^3x} - \frac{3}{16\log^4x} \right).
\end{displaymath}
Now we use the inequality $e^t \geq 1+t$, which holds for every real $t$, and the fact that $S < 0$ to obtain that the required inequality \eqref{6.2} holds
for every $x \geq 46\,909\,074$. We conclude with a computer check.

Analogously, we use the left-hand side inequality of Proposition \ref{prop501} to get
\begin{displaymath}
\prod_{p\leq x} \left( 1 - \frac{1}{p} \right) < \frac{e^{-\gamma}}{\log x} \exp \left( - S + \frac{1}{20\log^3x} + \frac{3}{16\log^4x} \right)
\end{displaymath}
for every $x > 1$. By Rosser and Schoenfeld \cite[p. 87]{rosser1962}, we have $-S < 1.02/((x-1)\log x)$ for every $x > 1$. Since
\begin{displaymath}
\frac{1}{20\log^3x} + \frac{3}{16\log^4x} + \frac{1.02}{(x-1)\log x} \leq \frac{0.06}{\log^3x} \leq \log \left( 1 + \frac{0.07}{\log^3x} \right)
\end{displaymath}
for every $x \geq 1.4 \cdot 10^8$, we get that the desired upper bound holds for every $x \geq 1.4 \cdot 10^8$. We conclude by direct computation.
\end{proof}



\section*{Acknowledgement}
I would like to express my great appreciation to Jan Büthe and Marc Del\'{e}glise for the computation of several special values of Chebyshev's 
$\vartheta$-function. I also would like to thank Marco Aymone for pointing out a mistake in the proof of Proposition \ref{prop601}, which has been corrected in 
this version.




\begin{thebibliography}{10}
\bibitem{ap} T. Apostol, \emph{Introduction to analytic number theory}, Springer, New York--Heidelberg, 1976.
\bibitem{axler2016} C. Axler, \emph{New bounds for the prime counting function}, Integers \textbf{16} (2016), Paper No. A22, 15 pp.
\bibitem{buethe} J. Büthe, \emph{An analytic method for bounding $\psi(x)$}, to appear in Math. Comp.
\bibitem{buethepre2} J. Büthe, \emph{An improved analytic method for calculating $\pi(x)$}, Manuscripta Math. \textbf{151} (2016), no. 3--4, 329--352.
\bibitem{buethe2016} J. Büthe, \emph{Estimating $\pi(x)$ and related functions under partial RH assumptions}, Math. Comp. \textbf{85} (2016), no. 301, 
2483--2498.
\bibitem{cheby1848} P. L. Chebyshev, \emph{Sur la fonction qui d\'{e}termine la totalit\'{e} des nombres premiers inf\'{e}rieurs \`{a} une limite donn\'{e}e}, 
M\'{e}moires des savants \'{e}trangers de l'Acad. Sci. St.P\'{e}tersbourg \textbf{6} (1848), 1--19. [Also in J. math. pures appli. \textbf{17} (1852), 
341--365.]
\bibitem{cheby1850} P. L. Chebyshev, \emph{M\'{e}moire sur les nombres premiers}, M\'{e}moires des savants \'{e}trangers de l'Acad. Sci. St.P\'{e}tersbourg 
\textbf{7} (1850), 17--33. [Also in J. math. pures appl. \textbf{17} (1852), 366--390.]
\bibitem{cp} M. Cipolla, \emph{La determinazione assintotica dell' $n^{imo}$ numero primo}, Rend. Accad. Sci. Fis-Mat. Napoli (3) \textbf{8} (1902), 132--166.
\bibitem{dudek} A. Dudek, \emph{An explicit result for primes between cubes}, preprint, 2014. Available at \url{arxiv.org/1401.4233}.
\bibitem{pd1} P. Dusart, \emph{Autour de la fonction qui compte le nombre de nombres premiers}, Dissertation, Universit\'{e} de Limoges, 1998.
\bibitem{dusart2010} P. Dusart, \emph{Estimates of some functions over primes without R.H.}, preprint, 2010. Available at \url{arxiv.org/1002.0442}.
\bibitem{dusart2016} P. Dusart, \emph{Estimates of $\psi$, $\theta$ for large values of $x$ without the Riemann hypothesis}, Math. Comp. \textbf{85} (2016), 
no. 298, 875--888.
\bibitem{dusart2017} P. Dusart, \emph{Explicit estimates of some functions over primes}, to appear in Ramanujan J.
\bibitem{er2} P. Erdös, \emph{Beweis eines Satzes von Tschebyschef}, Acta Litt. Sci. Szeged \textbf{5} (1932), 194--198.
\bibitem{euler1737} L. Euler, \emph{Variae observationes circa series infinitas}, Comment. Acad. Sci. Petropol. \textbf{9} (1744), 160--188.
\bibitem{faberkadiri} L. Faber and H. Kadiri, \emph{New bounds for $\psi(x)$}, Math. Comp. \textbf{84} (2015), no. 293, 1339--1357. 
\bibitem{gauss1791} C. F. Gauß, \emph{Asymptotische Gesetze der Zahlentheorie}, Werke, \textbf{10.1}, Teubner, Leipzig, 1917, 11--16.
\bibitem{gauss} C. F. Gauß, \emph{Werke}, 2 ed., Königlichen Gesellschaft der Wissenschaften, Göttingen, 1876.
\bibitem{gourdon} X. Gourdon, private conversation.
\bibitem{hadamard1896} J. Hadamard, \emph{Sur la distribution des z\'{e}ros de la fonction $\zeta(s)$ et ses cons\'{e}quences arithm\'{e}tiques}, Bull. Soc. 
Math. France \textbf{24} (1896), 199--220. 
\bibitem{kl} H. Kadiri and A. Lumley, \emph{Short effective intervals containing primes}, Integers \textbf{14} (2014), Paper No. A61, 18 pp.
\bibitem{koch1901} H. von Koch, \emph{Sur la distribution des nombres premiers}, Acta Math. \textbf{24} (1901), no. 1, 159--182. 
\bibitem{korobov1958} N. M. Korobov, \emph{Estimates of trigonometric sums and their applications}, Uspehi Mat. Nauk \textbf{13} (1958), no. 4 (82), 185--192.
\bibitem{landau} E. Landau, \emph{Handbuch der Lehre yon der Verteilung der Primzahlen}, 2 vols., Leipzig, Teubner, 1909. Reprinted in 1953 by Chelsea 
Publishing Co., New York.
\bibitem{mr} J.-P. Massias and G. Robin, \emph{Bornes effectives pour certaines fonctions concernant les nombres premiers}, Journal Th. Nombres de Bordeaux, 
Vol. \textbf{8} (1996), 213--238.
\bibitem{mertens1874} F. Mertens, \emph{Ein Beitrag zur analytischen Zahlentheorie}, J. Reine Angew. Math. \textbf{78} (1874), 42--62.
\bibitem{nark} W. Narkiewicz, \emph{The development of prime number theory. From Euclid to Hardy and Littlewood}, Springer Monographs in Mathematics, 
Springer-Verlag, Berlin, 2000, xii+448 pp.
\bibitem{pan3} L. Panaitopol, \emph{A formula for $\pi(x)$ applied to a result of Koninck-Ivi\'{c}}, Nieuw Arch. Wiskd. (5) \textbf{1} (2000), no. 1, 55--56.
\bibitem{polignac} A. de Polignac, \emph{Recherches sur les nombres premiers}, Comptes Rendus Acad. Sci. Paris \textbf{45}, 406--410, 431--434, 575--580, 
882--886. 
\bibitem{rasa} O. Ramar\'{e} and Y. Saouter, \emph{Short effective intervals containing primes}, J. Number Theory \textbf{98} (2003), no.1, 10--33.
\bibitem{rosser1962} J. B. Rosser and L. Schoenfeld, \emph{Approximate formulas for some functions of prime numbers}, Illinois J. Math. \textbf{6}:1 (1962), 
64--94.
\bibitem{schoenfeld1976} L. Schoenfeld, \emph{Sharper bounds for the Chebyshev functions $\theta(x)$ and $\psi(x)$} II, Math. Comp. \textbf{30} (1976), no. 
134, 337--360.
\bibitem{sloane1} N. J. A. Sloane, Sequence A077761, The on-line encyclopedia of integer sequences. \url{https://oeis.org/A077761}. Accessed 17 August 2016.
\bibitem{sloane2} N. J. A. Sloane, Sequence A083343, The on-line encyclopedia of integer sequences. \url{https://oeis.org/A083343}. Accessed 17 August 2016.
\bibitem{trudgian2014} T. S. Trudgian, \emph{An improved upper bound for the argument of the Riemann zeta-function on the critical line} II, J. Number Theory 
\textbf{134} (2014), 280--292.
\bibitem{trud} T. S. Trudgian, \emph{Updating the error term in the prime number theorem}, Ramanujan J. \textbf{39} (2016), no. 2, 225--234.
\bibitem{vallee1896} C.-J. de la Vall\'{e}e Poussin, \emph{Recherches analytiques la th\'{e}orie des nombres premiers}, Ann. Soc. scient. Bruxelles \textbf{20}
(1896), 183--256.
\bibitem{vallee1899} C.-J. de la Vall\'{e}e Poussin, \emph{Sur la fonction $\zeta(s)$ de Riemann et le nombre des nombres premiers inf\'{e}rieurs \`{a} une
limite donn\'{e}e}, Mem. Couronn\'{e}s de l'Acad. Roy. Sci. Bruxelles \textbf{59} (1899), 1--74.
\bibitem{vinogradov1958} I. M. Vinogradov, \emph{A new estimate of the function $\zeta(1+it)$}, Izv. Akad. Nauk SSSR. Ser. Mat. \textbf{22}:2 (1958), 161--164.
\end{thebibliography}
\end{document}